\newtheorem{theorem}{Theorem}[section]
\newtheorem{corollary}[theorem]{Corollary}
\newtheorem{proposition}[theorem]{Proposition}
\newtheorem{lemma}[theorem]{Lemma}
\newtheorem{definition}[theorem]{Definition}
\newtheorem{remark}[theorem]{Remark}
\def\cF{\mathcal{F}}
\def\cH{\mathcal{H}}
\def\cP{\mathcal{P}}
\def\bD{\mathbb{D}}
\def\bE{\mathbb{E}}
\def\bP{\mathbb{P}}
\def\bR{\mathbb{R}}
\def\R{\mathbb{R}}
\def\e{\varepsilon}
\numberwithin{equation}{section}
\begin{document}

\title{The hyperbolic Anderson model: Moment estimates of the Malliavin derivatives and  applications\footnote{Dedicated to Professor Istv\'an Gy\"ongy on the occasion of his seventieth birthday}}

\author{Raluca M. Balan\footnote{University of Ottawa, Department of Mathematics and Statistics, STEM Building, 150 Louis-Pasteur Private,
Ottawa, ON, K1N 6N5, Canada. E-mail: rbalan@uottawa.ca. Research
supported by a grant from the Natural Sciences and Engineering
Research Council of Canada.},   ~ David Nualart\footnote{Department
of Mathematics, University of Kansas, 405 Snow Hall, Lawrence, KS,
66045, USA. Email: nualart@ku.edu. Supported by NSF Grant DMS
1811181.},  ~ Llu\'is Quer-Sardanyons\footnote{Departament de
Matem\`atiques, Universitat Aut\`onoma de Barcelona, 08193,
Cerdanyola del Vall\`es, Catalonia, Spain. E-mail: quer@mat.uab.cat.
Supported by
the grant PGC2018-097848-B-I00 (Ministerio de Econom\'ia y Competitividad).}, ~ \\
Guangqu Zheng\footnote{Corresponding author. School of Mathematics, The University of
Edinburgh,  James Clerk Maxwell Building,
Peter Guthrie Tait Road, Edinburgh, EH9 3FD, United Kingdom. Email:
zhengguangqu@gmail.com}}

\maketitle

 \vspace{-0.8cm}

 \begin{abstract} In this article, we study the hyperbolic Anderson model
 driven by a space-time \emph{colored}  Gaussian homogeneous noise with spatial
 dimension $d=1,2$. Under mild assumptions, we provide $L^p$-estimates of  the
 iterated Malliavin derivative of the solution in terms of the fundamental
 solution of the wave solution.  To achieve this goal, we rely heavily on
 the \emph{Wiener chaos expansion} of the solution.

  Our first  application are  \emph{quantitative central limit theorems} for  spatial averages of the solution to the  hyperbolic Anderson model, where the  rates of convergence are described by  the total variation distance.
 These quantitative results  have been elusive so far due to the temporal correlation of the noise blocking us from using the It\^o calculus. A  \emph{novel} ingredient to overcome this difficulty is  the   \emph{second-order Gaussian Poincar\'e inequality}   coupled with the  application of the aforementioned $L^p$-estimates of  the first two   Malliavin derivatives. Besides, we provide  the corresponding functional central limit theorems.

 As  a second  application,  we establish the  absolute continuity of the law for the
 hyperbolic Anderson model.  The $L^p$-estimates of Malliavin derivatives
 are crucial ingredients to verify a local version of Bouleau-Hirsch criterion for  absolute
 continuity. Our approach  substantially simplifies the  arguments for the one-dimensional case,
 which has been studied in the recent   work by  Balan, Quer-Sardanyons and Song (2019).
 \end{abstract}

\medskip\noindent
{\bf Mathematics Subject Classifications (2010)}:   60H15; 60H07; 60G15; 60F05.

\medskip\noindent
{\bf Keywords:} Hyperbolic Anderson model;  Wiener chaos expansion;  Malliavin calculus; Second-order Poincar\'e inequality; Quantitative central limit theorem; Riesz kernel; Dalang's condition.

\bigskip
\bigskip

\allowdisplaybreaks

\pagebreak

\section{Introduction}

One of the main tools of modern stochastic analysis is Malliavin calculus. To put it short, this is a  differential calculus on a   Gaussian space that  represents an infinite dimensional generalization of the usual analytical concepts  on an Euclidean space. The Malliavin calculus (also known as the stochastic calculus of variations) was initiated by Paul Malliavin \cite{M76} to give  a probabilistic proof of H\"ormander's ``sum of squares'' theorem. It has been further developed by Stroock, Bismut, Watanabe and others.   One of the main applications of Malliavin calculus is the study of  regularity properties of probability laws, for example, the laws of the solutions to certain stochastic differential equations and stochastic partial differential equations (SPDEs), see \emph{e.g.} \cite[Chapter 2]{Nualart06}. The Malliavin calculus is also useful in formulating and  interpreting  stochastic  (partial) differential equations when the solution is not adapted to a Brownian filtration, which is the case of  SPDEs driven by a Gaussian noise that is colored in time.

Recently, the  Malliavin calculus has found another important
application in the work of   Nualart and Ortiz-Latorre \cite{NOL08},
which paved the road for \emph{Stein to meet Malliavin}. The authors
of  \cite{NOL08} applied the Malliavin calculus (notably the
integration by parts formula) to characterize the convergence in law
of a sequence of multiple Wiener integrals, and they were able to
give new proofs for the fourth moment theorems of Nualart, Peccati
and Tudor \cite{FMT05, PT05}. Soon after the work \cite{NOL08},
Nourdin and Peccati  combined  Malliavin   calculus and   Stein's
method of normal approximation to quantify the fourth moment
theorem. Their work \cite{NP09} marked the birth of the so-called
Malliavin-Stein approach.  This combination works admirably well,
partially because one of  the  fundamental ingredients in Stein's
method---the so-called Stein's lemma \eqref{S_lem}---that
characterizes the normal distribution,
 is nothing else but a particular case of the integration by parts formula \eqref{IbP} in Malliavin calculus. We refer interested readers to \cite[Section 1.2]{GZ18} for a friendly introduction to this approach.

\medskip
The central object   of study in this paper is
 the stochastic wave equation with  \emph{linear}  Gaussian multiplicative noise (in \emph{Skorokhod sense}):
\begin{align}
\label{wave}
\begin{cases}
\dfrac{\partial^2 u}{\partial t^2} =\Delta u+u\dot{W} \\
u(0,x) = 1, \quad \dfrac{\partial u}{\partial t}(0,x)=0\end{cases} ~\text{on $\bR_{+} \times \bR^d$ for $d\in\{1,2\}$,}
\end{align}
 where $\Delta$ is the Laplacian in space variables and the Gaussian noise $\dot{W}$ has the following correlation structure
\[
\bE\big[ \dot{W}(t,x) \dot{W}(s,y)     \big] =\gamma_0(t-s) \gamma(x-y),
\]
with the following standing assumptions:\label{page2}
\begin{itemize}
\item[(i)] $\gamma_0:\bR \to [0,\infty]$ is  locally integrable and non-negative definite;

\item[(ii)] $\gamma$ is a non-negative  and  non-negative definite measure on $\R^d$
whose spectral measure $\mu$\footnote{The spectral measure $\mu$  of $\gamma$ is a tempered measure on $\R^d$ such that    $\gamma=\cF \mu$, that is, $\gamma$ is the Fourier transform of $\mu$,  and its existence is guaranteed by the Bochner-Schwarz theorem.}
satisfies
  \emph{Dalang's condition}:
\begin{align}
\qquad\qquad \quad   \int_{\bR^d}\frac{1}{1+|\xi|^2}\mu(d\xi)<\infty,
\label{DC}
\end{align}
where  $|\xi|$ denotes the Euclidean norm of $\xi\in\bR^d$.
\end{itemize}
An important example of the temporal correlation is the Riesz kernel $\gamma_0(t)=|t|^{-\alpha_0}$ for some $\alpha_0\in (0,1)$ (with $\gamma_0(0)=\infty$).

Equation \eqref{wave}  is also known in the literature as the \emph{hyperbolic Anderson model},
by analogy with the parabolic Anderson model
 in which the wave operator is replaced by
  the heat operator.
The noise $\dot{W}$ can be formally realized as an isonormal Gaussian process
$W=\{W(\phi): \phi \in \mathcal{H}\}$ and   here $\mathcal{H}$ is a Hilbert space that is the  completion of the set $C^\infty_c\big(\bR_+\times\bR^d)$ of infinitely differentiable  functions with compact support  under the inner product
\begin{align}
\langle \phi, \psi \rangle_{\mathcal{H}} &= \int_{\bR_{+}^2 \times \bR^{2d}}\phi(t,x)\psi(s,y)\gamma_0(t-s)\gamma(x-y)dtdxdsdy\label{defcH}  \\
&=  \int_{\bR_{+}^2 } dt ds \gamma_0(t-s)  \int_{\bR^d} dx \phi(t,x)   \big[\psi(s,\bullet) \ast\gamma\big](x)   \label{defcH2},
\end{align}
where we  write $\gamma(x)$ for the density of $\gamma$ if it exists and we shall use the definition \eqref{defcH2} instead of \eqref{defcH} when $\gamma$ is a measure. In \eqref{defcH2},   $\ast$ denotes the convolution in the space variable and $\gamma_0(t)= \gamma_0(-t)$ for $t<0$.
We denote by $\mathcal{H}^{\otimes p}$ the $p$th tensor product of $\cH$ for $p\in\mathbb{N}^\ast$, see Section \ref{sec2} for more details.

As mentioned before, the existence of a temporal correlation $\gamma_0$ prevents us from defining  equation \eqref{wave} in the It\^o sense due to a  lack of the  martingale structure. In the recent work \cite{BS17} by Balan and Song, the following   results are established using Malliavin calculus. Let $G_t$ denote the fundamental solution to the corresponding deterministic wave equation, that is, for $(t,z)\in (0,\infty)\times\bR^d$,
\begin{align}\label{Green}
G_t(z) := \begin{cases} \dfrac{1}{2} \mathbf{1}_{\{ | z| < t\}} \quad &\text{if $d=1$}; \\
\dfrac{1}{2\pi  \sqrt{ t^2 - | z|^2}}  \mathbf{1}_{\{ | z| < t\}} \quad &\text{if $d=2$}.
\end{cases}
\end{align}
To ease the notation, we will stick to the convention that
 \begin{align}
 \text{$G_t(z) =0$ when $t\leq 0$.} \label{rule1}
 \end{align}

\begin{definition} {\rm{Fix} $d\in\{1,2\}$.  We say that a square-integrable process $u  = \{ u(t,x): (t,x)\in\bR_+\times\bR^d\}$     is a \emph{mild Skorokhod solution} to the hyperbolic Anderson model \eqref{wave} if  $u$ has a jointly measurable  modification $($still denoted by $u$$)$ such that $\sup\{ \bE[u(t,x)^2 ]: (t,x)\in[0,T]\times\bR^d\} < \infty$ for any finite $T$; and for any $t>0$ and $x\in\bR^d$, the following equality holds in $L^2(\Omega)$:
\[
u(t,x)=1 + \int_0^t \int_{\bR^d} G_{t-s}(x-y) u(s,y) W(ds,dy),
\]
where the above stochastic integral is understood in the \emph{Skorokhod sense} and the process  $(s,y)\in\bR_+\times\bR^d\longmapsto   \mathbf{1}_{(0,t)}(s) G_{t-s}(x-y)u(s,y)$ is Skorokhod integrable.  See Definition 5.1 in \cite{BS17} and Definition 1.1 in \cite{BQS}.}
\end{definition}

It has been proved in \cite[Section 5]{BS17} that  equation
\eqref{wave} admits a unique mild  Skorokhod solution $u$ with the
following Wiener chaos expansion:
\begin{align}\label{WCE}
u(t,x) =  1 + \sum_{n\geq 1}I_n\big( \widetilde{f}_{t,x,n} \big),
\end{align}
 where $I_n$ denotes the $n$th multiple Wiener integral associated to the isonormal Gaussian process $W$ (see Section \ref{sec2} for more details),
 $f_{t,x,n}\in\mathcal{H}^{\otimes n}$ is defined by (with the convention \eqref{rule1} in mind)
 \begin{equation}
f_{t,x,n}(t_1,x_1,\dots   ,t_n, x_n):=G_{t-t_{1}}(x-x_{1})  G_{t_1-t_2}(x_1-x_2) \cdots G_{t_{n-1}-t_n}(x_{n-1}-x_n),   \label{eq:3}
\end{equation}
  and $\widetilde{f}_{t,x,n}$ is the canonical symmetrization of $f_{t,x,n}\in\mathcal{H}^{\otimes n}$ given by
  \begin{equation}
\widetilde{f}_{t,x,n}(t_1,x_1,\dots ,t_n, x_n):=\frac{1}{n!}
\sum_{\sigma \in \mathfrak{S}_n}f_{t,x,n}(t_{\sigma(1)}, x_{\sigma(1)}, \dots,t_{\sigma(n)}, x_{\sigma(n)}), \label{eq:3wt}
\end{equation}
where the sum in \eqref{eq:3wt} runs  over    $\mathfrak{S}_n$, the set of permutations on $\{1,2,\dots, n\}$. For example, $f_{t,x,1}(t_1,x_1) =G_{t-t_1}(x-x_1)$ and
\[
\widetilde{f}_{t,x,2}(t_1,x_1, t_2, x_2) = \frac{1}{2} \Big( G_{t-t_1}(x-x_1) G_{t_1-t_2}(x_1-x_2) +  G_{t-t_2}(x-x_2) G_{t_2-t_1}(x_2-x_1)   \Big).
\]
We would like to point out that in the presence of temporal correlation, there is no developed solution theory for the nonlinear wave equation (replacing $u \dot{W}$ in \eqref{wave} by $\sigma(u) \dot{W}$ for some deterministic Lipschitz function $\sigma:\R\to\R$). We regard this as a totally different   problem. \\

Now let us introduce the following hypothesis when $d=2$:
\begin{align*}
{\bf (H1)} \begin{cases}
  &\text{(\texttt{a})   $\gamma\in L^\ell(\bR^2)$ for some $\ell\in(1,\infty)$,}
  \\
& \text{(\texttt{b})   $\gamma(x)= |x|^{-\beta}$ for some $\beta\in(0,2)$,}
\\
   &\text{(\texttt{c}) $\gamma(x_1,x_2) =\gamma_1(x_1)\gamma_2(x_2)$, where  $\gamma_i(x_i) = |x_i|^{-\beta_i}$ or   $\gamma_i\in L^{\ell_i}(\bR)$   } \\
 &\qquad   \text{for some $0 < \beta_i < 1  < \ell_i <+\infty$, $i=1,2$. }
\end{cases}
\end{align*}

 \begin{remark}  {\rm
 (i)
 Note that  condition (\texttt{a}) for $d=2$ is slightly stronger than  Dalang's condition \eqref{DC}. In fact, when $d=2$,  the paper \cite{KZ99} pointed out that  Dalang's condition \eqref{DC} is equivalent to
 \begin{align}\label{DCeq}
 \int_{|x|\leq 1} \ln( |x|^{-1}  ) \gamma(x)dx < \infty;
 \end{align}
 let $\ell^\star = \frac{\ell}{\ell-1}$ and $0< \varepsilon <  1/\ell^\star$, then there is some $\delta\in(0,1)$ and a constant $C_\e$ such that $\ln ( |x|^{-1})  \leq C_\e |x|^{-\e}$ for any $|x|\leq \delta$, from which we deduce that
  \begin{align*}
 \int_{|x|\leq 1} \ln( |x|^{-1}  ) \gamma(x)dx  & \leq  \ln(\delta^{-1})  \int_{ \delta < |x|\leq 1}  \gamma(x)dx + C_\e \int_{|x|\leq \delta} |x|^{-\e} \gamma(x)dx \\
 &\leq  \ln(\delta^{-1})  \int_{ \delta < |x|\leq 1}  \gamma(x)dx + C_\e  \| \gamma\|_{L^\ell(\bR^2)} \left( \int_{|x|\leq \delta} |x|^{-\e\ell^\star}dx \right)^{1/\ell^\star}<\infty.
 \end{align*}
(ii)  The case (\texttt{c}) in  Hypothesis  ${\bf (H1)}$ is a mixture of cases (\texttt{a}) and (\texttt{b}).  Accordingly,  more examples of the noise $\dot{W}$  arise. In the space variables,
 $W$ can behave like a fractional Brownian sheet with Hurst indices greater than $1/2$ in both directions, i.e. $\gamma(x_1,x_2)=|x_1|^{2H_1-2}|x_2|^{2H_2-2}$ for some  $H_1,H_2 \in (1/2,1)$.

 \noindent
 (iii) For $d=1$ we just assume that $\gamma$ is a non-negative and non-negative definite measure on $\R$. In this case
 (see, for instance, Remark 10 of \cite{Dalang99})  Dalang's condition is always satisfied.
 }
 \end{remark}

  Under  Hypothesis  $\bf (H1)$,  we will  state our first main result --- the $L^p(\Omega)$ estimates of the Malliavin derivatives of $u(t,x)$.
 The first Malliavin derivative $Du(t,x)$ is a random element in the Hilbert space $\cH$, the completion of $C^\infty_c\big(\bR_+\times\bR^d)$ under the inner product \eqref{defcH}; as the space $\mathcal{H}$ contains generalized functions,   it is not  clear at first sight whether $(s,y) \longmapsto D_{s,y}u(t,x)$ is a  (random) function. The higher-order Malliavin derivative  $D^{m} u(t,x)$  is a random element in $\cH^{\otimes m}$ for $m\geq 1$, see Section \ref{sec2} for more details.

 \medskip

Let us first fix some notation.

\medskip

\noindent\textbf{Notation A.}  (1)  We write  $a\lesssim b$ to mean $a\leq Kb$ for some immaterial constant $K>0$.

\noindent{(2)}  We write    $\| X\|_p =  \big(\bE [ |X | ^p  ]\big)^{1/p}$ to denote the $L^p(\Omega)$-norm of $X$ for  $p\in[1,\infty)$.

\noindent{(3)} When  $p$
is a positive integer, we often write $\pmb{z_p} = (z_1,
\dots, z_p)$ for points in $\bR_+^p$ or  $\bR^{dp}$,  and $d\pmb{z_p}=dz_1 \cdots dz_p$,
$\mu(d\pmb{z_p}) = \mu(dz_1)\cdots \mu(dz_p)$. For a  function $h:
(\R_+\times \R^d)^p\rightarrow \R$ with $p\geq 2$, we often write
\[
h(\pmb{s_p}, \pmb{y_p}) = h(s_1, \dots,  s_p, y_1,\dots , y_p) = h(s_1, y_1, \dots, s_p, y_p),
\]
which shall not cause any confusion.
   For $m\in\{1,\dots, p-1\}$ and $(\pmb{s_m}, \pmb{y_m})\in\bR_+^m\times\bR^{dm}$, the expression $h(\pmb{s_m}, \pmb{y_m};\bullet)$ stands for the function
\[
(t_1,x_1, \dots, t_{p-m}, x_{p-m}) \mapsto h(s_1, y_1, \dots, s_m, y_m, t_1, x_1, \dots , t_{p-m}, x_{p-m})=h(\pmb{s_m}, \pmb{y_m};\pmb{t_{p-m}}, \pmb{x_{p-m}}).
\]

Now, with the above notation in mind,  we are in the position to state the first main result\footnote{{In higher dimension $(d\geq 3)$, the fundamental wave solution is a uniform measure supported on certain surfaces, then
the Malliavin derivative $Du(t,x)$ is expected to be merely a random measure instead of being a random function. In this case, the expression $D_{s,y}u(t,x)$ does not make sense; see also the recent article \cite{NXZ21} for related discussions.    }}.

\begin{theorem}\label{MR1}
Let $d\in \{1,2\}$ and suppose that Hypothesis $\bf (H1)$ holds if $d=2$.
 Then,  for any $(t,x) \in \R_+\times \R^d$, the random variable $u(t,x)$ belongs to $\mathbb{D}^{\infty}$ $($see Section \ref{sec21}$)$. Moreover,  for any integer $m\geq 1$, the $m$th Malliavin derivative   $D^mu(t,x)$ is a random  symmetric function denoted by
\[
(\pmb{s_m}, \pmb{y_m})=(s_1,y_1, \dots, s_m, y_m)\longmapsto D_{s_1,y_1} D_{s_2, y_2}\ldots D_{s_m, y_m} u(t,x) = D^m_{\pmb{s_m}, \pmb{y_m}} u(t,x),
\]
and for any $p\in[2,\infty)$, we have, for almost all  $(\pmb{s_m}, \pmb{y_m}) \in  [0,t]^m \times \R^{md}$,
\begin{align}\label{goalz}
m! \widetilde{f}_{t,x,m}(\pmb{s_m}, \pmb{y_m})  \leq    \big\| D^m_{\pmb{s_m}, \pmb{y_m}} u(t,x) \big\|_p \lesssim \widetilde{f}_{t,x,m}(\pmb{s_m}, \pmb{y_m}),
\end{align}
where the   constant in the upper bound only depends on $(p,t,\gamma_0, \gamma,m)$ and is increasing in $t$. Moreover, $D^m u(t,x)$ has a measurable modification.
\end{theorem}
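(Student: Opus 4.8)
The plan is to work directly with the Wiener chaos expansion \eqref{WCE} and to exploit the fact that the Malliavin derivative acts explicitly on multiple integrals. Recall that for $u(t,x) = 1 + \sum_{n\ge 1} I_n(\widetilde f_{t,x,n})$, one has, at least formally,
\[
D^m_{\pmb{s_m},\pmb{y_m}} u(t,x) = \sum_{n\ge m} \frac{n!}{(n-m)!} I_{n-m}\big( \widetilde f_{t,x,n}(\pmb{s_m},\pmb{y_m};\bullet) \big),
\]
so that by orthogonality of chaoses and hypercontractivity it suffices to control, for each fixed $(\pmb{s_m},\pmb{y_m})$, the $\cH^{\otimes(n-m)}$-norms of the kernels $\widetilde f_{t,x,n}(\pmb{s_m},\pmb{y_m};\bullet)$ and to sum the resulting series in $n$. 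The first step, then, is to set up this chaos-by-chaos bound rigorously: show $u(t,x)\in\bD^\infty$ by verifying that $\sum_n n^k n! \,\|\widetilde f_{t,x,n}\|_{\cH^{\otimes n}}^2 < \infty$ for every $k$, which in turn follows from the known Gaussian-type factorial decay of $\|\widetilde f_{t,x,n}\|_{\cH^{\otimes n}}^2$ coming from Dalang's condition (and, for $d=2$, Hypothesis $\bf(H1)$). This also gives that $D^m u(t,x)$ is a genuine element of $L^p(\Omega;\cH^{\otimes m})$.

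The second and central step is to obtain the pointwise bounds \eqref{goalz}. For the lower bound, I would note that $D^m u(t,x)$, as an element of $\cH^{\otimes m}$, has the projection onto the $m$th chaos of $u(t,x)$ equal to $m!\,\widetilde f_{t,x,m}$ (the $n=m$ term above contributes $m!\,I_0(\widetilde f_{t,x,m}(\pmb{s_m},\pmb{y_m})) = m!\,\widetilde f_{t,x,m}(\pmb{s_m},\pmb{y_m})$ deterministically); since all higher chaoses are centered and orthogonal, this deterministic term is exactly $\bE[D^m_{\pmb{s_m},\pmb{y_m}} u(t,x)]$, and by Jensen $\|D^m_{\pmb{s_m},\pmb{y_m}} u(t,x)\|_p \ge |\bE[\cdots]| = m!\,\widetilde f_{t,x,m}(\pmb{s_m},\pmb{y_m})$, using that $\widetilde f_{t,x,m}\ge 0$. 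For the upper bound, after applying Minkowski's inequality in $n$ and hypercontractivity ($\|I_{n-m}(g)\|_p \le (p-1)^{(n-m)/2}\sqrt{(n-m)!}\,\|g\|_{\cH^{\otimes(n-m)}}$ for symmetric $g$), I am reduced to estimating $\|\widetilde f_{t,x,n}(\pmb{s_m},\pmb{y_m};\bullet)\|_{\cH^{\otimes(n-m)}}$. Here I expand the symmetrization \eqref{eq:3wt}, bound the $\cH^{\otimes(n-m)}$-inner product by passing to the spectral/convolution form \eqref{defcH2}, and use the semigroup-type inequalities for $G$ (boundedness of $\widehat{G_t}$, the identity $\widehat{G_t}(\xi) \le C t$, together with the $d=2$ integrability of $\gamma$ against the relevant kernels) to peel off the variables one at a time, producing a bound of the form $C^n \prod (\text{integrals of } \widehat{G})$ that, crucially, factors as $\widetilde f_{t,x,m}(\pmb{s_m},\pmb{y_m})$ times a summable-in-$n$ tail. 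Summing over $n\ge m$ yields the claimed $\lesssim \widetilde f_{t,x,m}(\pmb{s_m},\pmb{y_m})$ with a constant depending only on $(p,t,\gamma_0,\gamma,m)$ and increasing in $t$.

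The third step, measurability of the modification, I would handle by a standard argument: the finite partial sums $\sum_{n=m}^{N} \frac{n!}{(n-m)!} I_{n-m}(\widetilde f_{t,x,n}(\cdot;\bullet))$ are jointly measurable in $(\pmb{s_m},\pmb{y_m},\omega)$ (each term being a multiple integral of a measurable kernel), they converge in $L^p(\Omega)$ for a.e.\ $(\pmb{s_m},\pmb{y_m})$ by the tail estimate just obtained, and a Fubini/completeness argument upgrades this to the existence of a jointly measurable limit; symmetry in the $m$ pairs $(s_i,y_i)$ is inherited from the symmetry of $\widetilde f_{t,x,n}$ in its first $m$ arguments after symmetrization.

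I expect the main obstacle to be the second step --- more precisely, carrying out the iterated spectral estimates for $\|\widetilde f_{t,x,n}(\pmb{s_m},\pmb{y_m};\bullet)\|_{\cH^{\otimes(n-m)}}$ in the $d=2$ case in such a way that the bound genuinely factors through $\widetilde f_{t,x,m}(\pmb{s_m},\pmb{y_m})$ rather than through some coarser quantity, and that the per-chaos constant decays fast enough in $n$ to be summed. The symmetrization mixes the "frozen" variables $(\pmb{s_m},\pmb{y_m})$ with the integration variables, so one must track which of the $n!$ permutations place the frozen variables in which positions and argue that each such term is dominated, after integration, by $\widetilde f_{t,x,m}(\pmb{s_m},\pmb{y_m})$ times a clean factor. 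This is where Hypothesis $\bf(H1)$ (the $L^\ell$ or Riesz structure of $\gamma$, including the product case (\texttt{c})) and the precise time-integrability of $\gamma_0$ will be used; the one-dimensional case is comparatively soft since there $G_t$ is bounded and $\gamma$ is an arbitrary tempered nonnegative definite measure.
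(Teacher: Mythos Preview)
Your overall architecture is right --- chaos expansion, hypercontractivity, lower bound from the deterministic $n=m$ term, and the measurable modification via partial sums --- and matches the paper's. But the crucial technical step, the upper bound in \eqref{goalz} for $d=2$, is under-specified in a way that I believe would not close as written. Two concrete ingredients from the paper are absent from your plan.

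First, the paper does \emph{not} estimate $\|\widetilde f_{t,x,n}(\pmb{s_m},\pmb{y_m};\bullet)\|_{\cH^{\otimes(n-m)}}$ directly with the time correlation $\gamma_0$ in place. It first invokes the inequality \eqref{white-ineq}, $\|f\|_{\cH^{\otimes k}}^2 \le \Gamma_t^k \|f\|_{\cH_0^{\otimes k}}^2$, to reduce to the white-in-time setting. This is not cosmetic: once the noise is white in time, the kernel $f^{(\pmb{i_m})}_{t,x,n}(\pmb{s_m},\pmb{y_m};\bullet)$ in the decomposition \eqref{decomp-fjj} is a tensor product of pieces with \emph{disjoint temporal supports}, so the product formula \eqref{prod} collapses ($f\otimes_r g=0$ for $r\ge1$) and the corresponding multiple integral factorizes into independent blocks. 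That factorization is what lets the paper reduce everything to the two ``elementary'' quantities in \eqref{types}. Your plan to ``peel off variables one at a time'' in the full $\cH^{\otimes(n-m)}$ norm would have to fight the time correlation $\gamma_0$ at every step, and it is not clear how you would recover the clean tensor splitting without this reduction.

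Second, and more seriously, the Fourier-side estimates you describe (bounds on $\widehat{G_t}$) are exactly what the paper uses for the first term in \eqref{types}, via Lemma \ref{lem_4Qp}, but they cannot by themselves produce the \emph{pointwise} factor $G_{t-r}^2(x-z)$ needed for the second term. Fourier methods give integrals of $|\widehat G|^2$ against $\mu$, not values of $G$ at the frozen points $(\pmb{s_m},\pmb{y_m})$. For $d=2$ the paper works in \emph{physical space}: it uses the embedding \eqref{ineq-norms}--\eqref{q-ineq} to pass to $L^{2q}$ norms and then the convolution estimate of Lemma \ref{lem33BNZ}, namely $\int_r^t (G_{t-s}^{2q}*G_{s-r}^{2q})^{1/q}(z)\,ds \le A_q (t-r)^{1/q-1} G_{t-r}^{2-1/q}(z)$, iterated through the chain of $G$'s in \eqref{def_GJ}. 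This is precisely what extracts $G_{t-r}^2(x-z)$ (hence ultimately $\widetilde f_{t,x,m}(\pmb{s_m},\pmb{y_m})$) as a factor while leaving a $C^n/n!$-type tail, see \eqref{EST_d=2}. Your last paragraph correctly flags this as the hard part, but the mechanism you propose (spectral estimates) is not the one that works here.
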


Throughout this paper, we will work with the measurable modifications of $D u(t,x)$ and $D^2 u(t,x)$ given by Theorem \ref{MR1}, which are still   denoted by $D u(t,x), D^2 u(t,x)$ respectively.

 \medskip

In this paper, we will present two  applications of Theorem
\ref{MR1}.  Our first  application are \emph{quantitative central
limit theorems}  (CLTs) for the spatial averages of the solution to
\eqref{wave}, which have been elusive so far due to the temporal
correlation of the noise  preventing the use of   It\^o calculus
approach. A  \emph{novel} ingredient to overcome this difficulty is
the so-called \emph{second-order Gaussian Poincar\'e inequality} in
an improved form.  We will address these CLT results in  Section
\ref{sec11}.  While in Section \ref{sec12},  as the second
application, we establish  the  absolute continuity of the law of the solution
to equation \eqref{wave} using the $L^p$-estimates of Malliavin
derivatives that are crucial to establish a local version of
Bouleau-Hirsch criterion \cite{BH86}.

\subsection{Gaussian fluctuation of spatial averages}  \label{sec11}

Spatial averages of SPDEs have recently  attracted considerable interest. It was  Huang, Nualart and Viitasaari who first studied the  fluctuation of  spatial statistics and established  a central limit theorem  for a nonlinear SPDE in   \cite{HNV20}. More precisely, they considered the following one-dimensional stochastic heat equation
\begin{align}\label{SHE}
\frac{\partial u}{\partial t}  = \frac{1}{2}\Delta u + \sigma(u) \dot{W}
\end{align}
on $\bR_+\times\bR$, where $\dot{W}$ is a space-time Gaussian white
noise, with constant initial condition $u(0,\bullet)=1$ and the
nonlinearity $\sigma:\bR\to\bR$  is a  Lipschitz function. In view
of the localization property of its mild formulation (in the Walsh
sense \cite{Walsh}),
\begin{equation} \label{eq1}
u(t,x)=1+\int_0^t \int_{\bR} p_{t-s}(x-y) \sigma\big( u(s,y) \big) W(ds, dy),
\end{equation}
with $p_t$ denoting the heat kernel\footnote{$p_t(x) =(2\pi t)^{-d/2} e^{-|x|^2/(2t)}  $ for $t>0$ and $x\in\bR^d$; in \eqref{eq1}, $d=1$.}, one can regard $u(t,x)$ and $u(t,y)$ as weakly dependent random variables for $x,y$ far apart so that the integral
 \[
  \int_{-R}^R \big[ u(t,x) -1 \big] dx
\]
can be roughly understood as a sum of weakly dependent random variables.  Therefore, it is very natural to expect  Gaussian fluctuations when $R$ tends to infinity.

\medskip

Let us stop now to briefly  fix some notation to facilitate our discussion.

\medskip

\noindent\textbf{Notation B.} (1)  For $t>0$, we define, with $B_R:=\{ x\in\bR^d: |x| \leq R\}$,
\begin{align}\label{FRT}
F_R(t) := \int_{B_R}  \big[ u(t,x) -1 \big] dx \quad{\rm and}\quad \sigma_R(t) = \sqrt{ \text{Var}\big(F_R(t) \big) }.
\end{align}
\noindent(2) We write  $f(R)\sim g(R)$  to mean that   $f(R)/ g(R)$ converges to some positive constant    as $R\to \infty$.

\noindent(3) For two real random variables $X, Y$ with distribution measures $\mu, \nu$ respectively,  the total variation distance between $X, Y$ (or $\mu,\nu$) is defined to be
\begin{align}\label{TVdef}
d_{\rm TV}( X, Y) = \sup_{B} \big\vert \mu(B) -\nu(B)\vert,
\end{align}
where the supremum runs over all Borel set $B\subset \bR$. The total
variation distance is well known to induce a stronger topology than
that of convergence in distribution, see \cite[Appendix C]{blue}.

\noindent(4) We define the following quantities for future reference:
\begin{align}\label{KBD}
\omega_1=2, \quad \omega_2=\pi, \quad {\rm and}\quad
\kappa_{\beta,d} := \int_{\bR^{2d}} dxdy |x-y|^{-\beta} \mathbf{1}_{B_1}(x) \mathbf{1}_{B_1}(y)~\text{for $\beta \in (0,d)$}.
\end{align}
\noindent(5)  For an integer $m\geq 1$ and $p\in[1,\infty)$, we say
$F\in\mathbb{D}^{m,p}$  if $F$ is $m$-times Malliavin differentiable
random variable in $L^p(\Omega)$ and $\bE\big[ \| D^j
F\|_{\cH^{\otimes j}}^p \big] <\infty $ for every $j=1,\dots, m$;
see Section \ref{sec21} for more details.

Now let us illustrate the strategy in  \cite{HNV20}: {(For this reference, $d=1$)}
\begin{itemize}
\item The authors first rewrite $F_R(t) = \delta(V_{t, R})$ with the random kernel
\[
V_{t,R}(s,y) = \sigma(u(s,y) ) \int_{B_R} p_{t-s}(x-y) dx,
\]
where $\delta$ denotes the
Skorokhod integral,
the adjoint of the Malliavin derivative $D$.
\item  By standard computations, they obtained $\sigma^2_R(t)\sim R$.
\item If $F=\delta(v)\in\mathbb{D}^{1,2}$ is a centered random variable with variance one, for some $v$ in the domain of $\delta$,  the (univariate) Malliavin-Stein bound (see \cite[Proposition 2.2]{HNV20}) ensures that
$d_{\rm TV}(  F, Z  )\leq 2\sqrt{ \text{Var}( \langle DF, v\rangle_\cH)}$ for $Z\sim N(0,1)$.
\item Combining the above points, one can see that the obtention of a quantitative CLT is reduced to the computation of $ \text{Var}( \langle DF_R(t), V_{t,R} \rangle_\cH)$.
\end{itemize}
Because the driving noise is white in time as considered in \cite{HNV20},   tools from It\^o calculus (Clark-Ocone formula, Burkholder's inequality, \emph{etc.}) are used to estimate the above variance term. It is proved in \cite{HNV20} that  $d_{\rm TV}(  F_R(t) /\sigma_R(t), Z   ) \lesssim R^{-1/2}$. Meanwhile, a multivariate  Malliavin-Stein bound and similar computations lead to the convergence  of the finite-dimensional distributions, which coupled with the tightness property gives a functional CLT for $\{ R^{-1/2}F_R(t): t\in\bR_+\}$.

\medskip

The above general strategy has been adapted to various settings, see
\cite{CKNP19-2,  CKNP20,    HNVZ19, KNP20, KY20, Pu20} for the study
of stochastic heat equations and see \cite{BNZ20, DNZ20, NZ20} for
the study of stochastic wave equations. All these references
consider a Gaussian noise that is white in time.   Nevertheless,
when the Gaussian noise is colored in time, the mild formulation
\eqref{eq1} cannot be interpreted in the Walsh-It\^o sense. In this
situation, only in  the case
  $\sigma(u)=u$   the stochastic heat equation \eqref{SHE} (also known as the \emph{parabolic Anderson model})
  can be properly solved using
  Wiener chaos expansions,
  so that $F_R(t)$, defined in \eqref{FRT}, can be expressed as an infinite sum of multiple Wiener integrals.
  With this well-known fact in mind,  Nualart and Zheng \cite{NZ19BM} considered the parabolic Anderson model
(\emph{i.e.} \eqref{SHE} with $\sigma(u)=u$) on $\bR_+\times\bR^d$
such that $d\geq 1$,  the initial condition is constant and  the
assumptions (i)-(ii)  hold (see page \pageref{page2}).  The main
result of \cite{NZ19BM} is the chaotic CLT that is based on the
fourth moment theorems \cite{FMT05, PT05}.
  When, additionally,  $\gamma$ is a finite measure, the authors of \cite{NZ19BM}  established $\sigma_R(t)\sim R^{d/2}$ and a
functional CLT for the process $R^{-d/2} F_R$; they also considered
the case where $\gamma(x)=|x|^{-\beta}$, for some
$\beta\in(0,2\wedge d)$, is the Riesz kernel, and obtain the
corresponding CLT results. As pointed out in the paper
\cite{NZ19BM}, due to the homogeneity of the underlying Gaussian
noise, the solution $u$ to \eqref{SHE} can be regarded as the
functional of a stationary Gaussian random field so that, with the
Breuer-Major theorem \cite{BM83} in mind, it is natural  to study
Gaussian fluctuations for the problems \eqref{SHE} and \eqref{wave}.
Note  that the constant initial condition  makes the solution
stationary in space and, in fact it is spatially ergodic (see
\cite{CKNP20, NZ20erg}). At last, let us mention the paper
\cite{NSZ20} in which  chaotic CLT was used to study the parabolic
Anderson model driven by a colored Gaussian noise that is rough in
space.  However,  let us point out that the aforementioned methods fail to provide the rate of convergence when
the noise is colored in time.

  \medskip
 In this paper, we bring in  a novel ingredient -- the \emph{second-order Gaussian Poincar\'e inequality}\footnote{{The use of second-order Gaussian Poincar\'e inequality for obtaining CLT on a Gaussian space is one of the central techniques in the Malliavin-Stein approach; for example, in the recent paper \cite{DGLZ20}, Dunlap \emph{et al.} have used this Poincar\'e inequality to investigate the Gaussian fluctuation of the KPZ in dimension three and higher.   We remark here  that we can not directly apply this inequality because of the complicated correlation structure of the underlying Gaussian homogeneous noise, while the underlying Gaussian noise in \cite{DGLZ20} is white in time and smooth in space so that they can directly apply the version from  \cite{NPR09}. In this article, we have established   a quite involved variant  of second-order Poincar\'e inequality, which is tailor-made for our applications.
}  } --  to reach  quantitative CLT results for the hyperbolic Anderson model \eqref{wave}.  Let us first state our main result.

\begin{theorem}\label{MR2} Let $u$ denote the solution to the hyperbolic Anderson model \eqref{wave} and recall the definition of $F_R(t)$ and $ \sigma_R(t)$ from \eqref{FRT}. Let $Z\sim N(0,1)$ be the standard normal random variable. We assume that $\gamma_0$ is not identically zero meaning
\begin{align}
\| \gamma_0\|_{L^1([0,\e])}>0 ~\text{ for any $\e\in(0,1)$.} \label{NDC}
\end{align}  Then the following statements hold true:

\medskip

{\rm  (1)} Suppose that $0<\gamma(\bR^d) <\infty$ if $d=1$ and $\gamma\in L^1(\bR^d) \cap L^\ell(\bR^d)$ for some $\ell >1$  if $d=2$. Then,
\begin{center}
$\sigma_R(t) \sim R^{d/2}$ and
$
d_{\rm TV}\big( F_R(t) / \sigma_R(t) ,  Z \big) \lesssim R^{-d/2}.
$
\end{center}
Moreover, as $R\to\infty$, the process $\big\{ R^{-d/2} F_R(t): t\in\bR_+\big\}$ converges weakly  in the space of continuous functions $C(\bR_+)$ to a centered Gaussian process $\mathcal{G}$ with covariance structure
\begin{align}\label{COV:G}
\bE\big[ \mathcal{G}(t) \mathcal{G}(s) \big] =  \omega_d \sum_{p\geq
1} p! \int_{\bR^d} \big\langle \widetilde{f}_{t,x,p},
\widetilde{f}_{s,0,p} \big\rangle_{\cH^{\otimes p}}dx,
\end{align}
for $t,s\in\bR_+$. Here $\omega_1=2$, $\omega_2=\pi$ and
$\widetilde{f}_{t,x,p}$ are introduced in \eqref{KBD} and
\eqref{eq:3wt}, respectively.  The convergence of the series in
\eqref{COV:G} is part of the conclusion.

\medskip

{\rm (2)} Suppose $d\in\{1,2\}$ and $\gamma(x) = | x|^{-\beta}$ for some $\beta\in(0,2\wedge d)$.   Then,
\begin{center}
$\sigma_R(t) \sim R^{d-\frac{\beta}{2}}$ and $
d_{\rm TV}\big( F_R(t) / \sigma_R(t) , Z  \big) \lesssim R^{-\beta/2}.
$
\end{center}
Moreover, as $R\to\infty$, the process $\big\{ R^{-d+\frac{\beta}{2}} F_R(t): t\in\bR_+\big\}$ converges weakly in the space $C(\bR_+)$ to a centered Gaussian process $\mathcal{G}_{\beta}$ with the covariance structure
\begin{align} \label{COV:Gbeta}
\bE\big[ \mathcal{G}_\beta(t) \mathcal{G}_\beta(s) \big] =
\kappa_{\beta, d} \int_0^t dr\int_0^s dr' \gamma_0(r-r')  (t-r)(s-r'),
\end{align}
for $t,s\in\bR_+$. Here the quantity $\kappa_{\beta, d}$ is introduced in \eqref{KBD}.

\medskip

{\rm  (3)} Suppose $d=2$ and $\gamma(x_1,x_2) = \gamma_1(x_1)
\gamma_2(x_2)$ such that one of the following two conditions holds:
\begin{align}
  \begin{cases}
&\text{\rm ($a'$)}  ~\gamma_i(x_i) =|x_i|^{-\beta_i}~\text{for some $\beta_i\in(0,1)$, $i=1,2$;} \\
&\text{\rm ($b'$)}  ~\gamma_1\in L^{\ell}(\bR) \cap L^1(\bR) ~\text{and  $\gamma_2(x_2)=|x_2|^{-\beta}$ for some $0< \beta< 1 < \ell<\infty$.}
\end{cases}  \label{mix}
\end{align}
Then,
 \begin{align*}
    \begin{cases}
 \sigma_R(t) \sim R^{2 - \frac{1}{2}( \beta_1 + \beta_2)} \quad \text{and} \quad d_{\rm TV}\big( F_R(t) / \sigma_R(t) , Z  \big) \lesssim R^{-(\beta_1+\beta_2)/2} ~ &   \text{in case {\rm $(a')$}}, \\
 \sigma_R(t) \sim R^{(3-\beta)/2 } \quad \text{and} \quad d_{\rm TV}\big( F_R(t) / \sigma_R(t) , Z  \big) \lesssim R^{-(\beta+1)/2} ~  & \text{in case {\rm $(b')$}}.
\end{cases}
 \end{align*}
Moreover, as $R\to\infty$,  in case {\rm $(a')$ },  the process $\big\{ R^{-2+\frac{\beta_1+\beta_2}{2}} F_R(t): t\in\bR_+\big\}$ converges weakly in the space $C(\bR_+)$ to a centered Gaussian process $\mathcal{G}_{\beta_1, \beta_2}$ with the covariance structure
\begin{align} \label{COV:Gbeta12}
\bE\big[ \mathcal{G}_{\beta_1, \beta_2}(t) \mathcal{G}_{\beta_1,
\beta_2 } (s) \big] = K_{\beta_1, \beta_2}   \int_0^t dr\int_0^s dr'
\gamma_0(r-r') (t-r)(s-r'),
\end{align}
for $t,s\in\bR_+$, where
\begin{align}
  K_{\beta_1, \beta_2} :&= \int_{\bR^4}  \mathbf{1}_{\{ x_1^2+x_2^2\leq 1 \}} \mathbf{1}_{\{ y_1^2+y_2^2\leq 1 \}}  |x_1 - y_1|^{-\beta_1} |x_2 - y_2|^{-\beta_2} dx_1dx_2dy_1dy_2;  \label{Kbeta12}
 \end{align}
and
  in case {\rm $(b')$ },  the process $\big\{ R^{\frac{\beta-3}{2}} F_R(t): t\in\bR_+\big\}$ converges weakly in the space $C(\bR_+)$ to a centered Gaussian process $\widehat{\mathcal{G}}_{\beta}$ with the covariance structure
\begin{align} \label{COV:GbetaH}
\bE\big[ \widehat{\mathcal{G}}_{\beta}(t) \widehat{\mathcal{G}}_{\beta} (s) \big] = \gamma_1(\bR) \mathcal{L}_\beta  \int_0^t dr\int_0^s dr' \gamma_0(r-r') (t-r)(s-r')
\end{align}
for $t,s\in\bR_+$, where
  \begin{align}\label{def_L1B}
   \mathcal{L}_\beta : = \int_{\bR^3} dx_1dx_2 dx_3 \mathbf{1}_{\{  x_1^2 + x_2^2\leq 1 \}} \mathbf{1}_{\{  x_1^2 + x_3^2\leq 1 \}} |x_2-x_3|^{-\beta}.
  \end{align}

 For the above functional convergences, we specify that  the space $C(\bR_+)$ is equipped with the topology of uniform convergence on compact sets.

\end{theorem}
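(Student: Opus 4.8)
The plan is to combine the Wiener chaos expansion \eqref{WCE}, the moment estimates of Theorem~\ref{MR1}, and a tailored variant of the second-order Gaussian Poincar\'e inequality (which we establish below). Since $F_R(t)=\int_{B_R}[u(t,x)-1]\,dx=\sum_{n\ge1}I_n\big(\int_{B_R}\widetilde f_{t,x,n}\,dx\big)$, orthogonality of the Wiener chaoses gives
\[
\sigma_R^2(t)=\sum_{n\ge1}n!\,\Big\|\int_{B_R}\widetilde f_{t,x,n}\,dx\Big\|_{\cH^{\otimes n}}^2=\sum_{n\ge1}n!\int_{B_R}\int_{B_R}\varphi_n(x-x')\,dx\,dx',
\]
where $\varphi_n(z):=\langle\widetilde f_{t,z,n},\widetilde f_{t,0,n}\rangle_{\cH^{\otimes n}}$ by spatial stationarity of $W$. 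First I would settle the variance asymptotics. In case~(1) one checks that $\varphi_n\in L^1(\bR^d)$ for all $n$ with $\sum_n n!\,\|\varphi_n\|_{L^1(\bR^d)}<\infty$ — the domination coming from the upper bound in \eqref{goalz} and from $\int_{\bR^d}G_t(z)\,dz=t$ — so a routine bulk/boundary argument gives $\int_{B_R}\int_{B_R}\varphi_n\sim\omega_d R^d\int_{\bR^d}\varphi_n$, and dominated convergence over $n$ yields $\sigma_R^2(t)\sim\omega_d R^d\sum_{n\ge1}n!\int_{\bR^d}\langle\widetilde f_{t,z,n},\widetilde f_{t,0,n}\rangle_{\cH^{\otimes n}}\,dz$; hypothesis \eqref{NDC} makes the $n=1$ term strictly positive, hence $\sigma_R(t)\sim R^{d/2}$. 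In cases~(2)--(3) one instead rescales the spatial variables ($y=Rv$): the first chaos produces the exact order $R^{2d-\beta}$ (resp.\ $R^{4-\beta_1-\beta_2}$, $R^{3-\beta}$) with constant $\kappa_{\beta,d}$ (resp.\ $K_{\beta_1,\beta_2}$, $\gamma_1(\bR)\mathcal L_\beta$) multiplying $\int_0^t\!\int_0^t\gamma_0(r-r')(t-r)(t-r')\,dr\,dr'$, while the chaoses $n\ge2$ are shown, again via \eqref{goalz}, to contribute a strictly smaller power of $R$; this gives the stated $\sigma_R(t)$.

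For the total variation bound I would set $F:=F_R(t)/\sigma_R(t)$, which lies in $\mathbb D^{2,4}$ (indeed in $\mathbb D^{\infty}$) by Theorem~\ref{MR1}, and apply our second-order Poincar\'e inequality, which controls $d_{\rm TV}(F,Z)$ by an explicit $R$-dependent combination of $\bE[\|DF_R(t)\|_{\cH}^4]$ and $\bE[\|D^2F_R(t)\|^4]$ (in suitable operator / Hilbert--Schmidt norms) divided by powers of $\sigma_R^2(t)$. Because $D^mF_R(t)=\int_{B_R}D^mu(t,x)\,dx$, Minkowski's inequality pushes the $L^p(\Omega)$ norm inside the spatial integral and the pointwise estimate $\|D^m_{\pmb{s_m},\pmb{y_m}}u(t,x)\|_p\lesssim\widetilde f_{t,x,m}(\pmb{s_m},\pmb{y_m})$ from \eqref{goalz} reduces the entire right-hand side to deterministic integrals of products of $G$, $\gamma_0$ and $\gamma$ over $B_R^2$ and $B_R^3$. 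A scaling analysis of these integrals yields the rates $R^{-d/2}$, $R^{-\beta/2}$, $R^{-(\beta_1+\beta_2)/2}$ and $R^{-(\beta+1)/2}$ respectively; note that in the singular-kernel cases the rate is dictated by the long-range part of $\gamma$ and is therefore slower than $\sigma_R(t)^{-1}$.

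Finally, for the functional statements: convergence of finite-dimensional distributions follows from the same Malliavin--Stein scheme in its multivariate form (equivalently, from the univariate total variation bounds together with convergence of the covariances and a routine truncation of the chaos expansion), the limiting covariance being computed exactly as in the variance step — which produces the full series \eqref{COV:G} in case~(1) and the first-chaos expressions \eqref{COV:Gbeta}, \eqref{COV:Gbeta12}, \eqref{COV:GbetaH} in cases~(2)--(3). Tightness of the normalised processes in $C(\bR_+)$ (with the topology of uniform convergence on compacts) is obtained from the Kolmogorov continuity criterion: using hypercontractivity on Wiener chaos to reduce to second moments, one bounds $\bE[|a_R^{-1}(F_R(t)-F_R(s))|^{p}]\lesssim|t-s|^{1+\eta}$ uniformly for $R\ge1$ and $s,t$ in a compact interval (here $a_R$ is the relevant normalisation), the time-increment estimates for $\widetilde f_{t,x,n}-\widetilde f_{s,x,n}$ again being controlled through $G$ and \eqref{goalz}. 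I expect the main obstacle to lie in the total variation step: extracting the sharp power of $R$ from the second-order Poincar\'e bound in the singular-kernel cases requires a careful separation of the near-diagonal and long-range regimes in the spatial integrals together with a simultaneous check of integrability in time against $\gamma_0$, and — upstream of that — formulating the variant of the second-order Poincar\'e inequality in a form that remains usable despite the temporal correlation of the noise.
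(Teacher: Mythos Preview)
Your plan matches the paper's approach closely: chaos expansion for the covariance, an improved second-order Poincar\'e inequality for the quantitative bound, multivariate Malliavin--Stein (or first-chaos dominance) for f.d.d.\ convergence, and Kolmogorov--Chentsov for tightness. A few specifics are worth sharpening against the paper's execution.

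First, the ``tailored variant'' you allude to is not the standard Nourdin--Peccati--Reinert bound in global $\cH$-norms; the paper adapts Vidotto's improvement to obtain a bound of the form $d_{\rm TV}(F,Z)\le \tfrac{4}{\sigma^2}\sqrt{\mathcal A}$ where $\mathcal A$ is a sixfold integral of the product of four \emph{pointwise} $L^4(\Omega)$-norms $\|D^2_{r,z,\theta,w}F\|_4\,\|D^2_{s,y,\theta',w'}F\|_4\,\|D_{r',z'}F\|_4\,\|D_{s',y'}F\|_4$ against the correlation kernels (Proposition~\ref{2nd-tool}). After Minkowski and \eqref{goalz} this produces integrals over $B_R^4$ (not $B_R^2$ or $B_R^3$), split into four terms $\mathcal A_{R,1},\dots,\mathcal A_{R,4}$ according to the ordering of $r,\theta$ and $s,\theta'$; each is reduced by a fixed sequence of one-variable integrations, using only $\int_{\bR^d}G_s\le t$ and either $\int_{\bR^d}\gamma<\infty$ or the scaling bound $\sup_{z'}\int_{[-2,2]^d}\gamma(R(z-z'))\,dz\lesssim R^{-\alpha}$.

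Second, the summability $\sum_n n!\,\|\varphi_n\|_{L^1(\bR^d)}<\infty$ in case~(1) is not obtained from \eqref{goalz} (which bounds Malliavin derivatives, not the deterministic kernels $\varphi_n$); the paper instead expresses $\Phi_p$ via the Fourier transform, applies a symmetrisation lemma (Lemma~\ref{lem_ab}) and then Lemma~\ref{lem_4Qp}, exploiting that $\varphi=\cF\gamma$ is bounded when $\gamma\in L^1$. Similarly, the tightness estimate is obtained directly from the chaos expansion and the $1$-Lipschitz dependence of $\widehat G_t(\xi)=\sin(t|\xi|)/|\xi|$ on $t$, rather than via \eqref{goalz}. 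None of this changes your overall outline, but it does pinpoint where the actual work sits.
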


\begin{remark} {\rm (i) Note that the case when
$\gamma(x) =\gamma_1(x_1)\gamma_2(x_2)$ with
$\gamma_i\in
L^{\ell_i}(\bR)\cap L^1(\bR)$ for some $\ell_i>1$, $i=1,2$, is covered in part (1). Indeed, suppose that $\ell_1\geq \ell_2$, then  by H\"older's
inequality, $\gamma_1\in L^{\ell_1}(\bR)\cap L^{1}(\bR)$ implies $\gamma_1\in L^{\ell_2}(\bR)\cap L^{1}(\bR)
$ and hence   $\gamma\in
L^{\ell_2}(\bR^2) \cap L^1(\bR^2)$.

 (ii) The rate of convergence can also be described using other common  distances such as the Wasserstein distance and the Kolmogorov distance; see   \cite[Appendix C]{blue}.

(iii) The variance orders and the rates  in parts  (1) and (2) of
Theorem \ref{MR2} are consistent with previous work on stochastic
wave equations, see \cite{BNZ20, DNZ20, NZ20}. The setting in part
(3)  is new.
 As we will see shortly, our strategy is quite different from that in these papers.
}

\end{remark}

Now, let us briefly explain our strategy and begin with the Gaussian
Poincar\'e inequality. For $F\in\mathbb{D}^{1,2}$, the  Gaussian
Poincar\'e inequality (see  \emph{e.g.} \cite{HP95} or \eqref{GPI})
ensures that
\[
\text{Var}(F) \leq \bE\big[ \| DF \|_\cH^2 \big]~\text{with equality if and only if $F$ is  Gaussian},
\]
that is, if $DF$ is small, then the random variable $F$ has necessarily small fluctuations.  In the paper \cite{CHA09},   Chatterjee  pointed out that for $F=f(X_1, \dots , X_d)$ with $X_1, \dots, X_d$ i.i.d. $N(0,1)$ and $f$ twice differentiable,  $F$ is close in total variation distance to a normal distribution with matched mean and variance if {the Hessian matrix $\text{Hess}f(X_1, \dots, X_d)$ is negligible, roughly speaking}. This is known as the second-order Gaussian Poincar\'e inequality.  In what follows, we state the infinite-dimensional version of this inequality due to Nourdin, Peccati and Reinert; see the  paper  \cite{NPR09} as well as the book \cite{blue}\footnote{Note that there is a typo in equation (5.3.2) of \cite{blue}: We have $E[\|DF\|_{\cH}^4]^{1/4}$ instead of $E[\|D^2F\|_{\cH}^4]^{1/4}$.}.

\begin{proposition} Let $F$ be a centered element of $\mathbb{D}^{2,4}$ such that $\bE[ F^2] = \sigma^2 > 0$ and let $Z\sim N(0,\sigma^2)$.  Then,
\begin{align} \label{2ndP}
d_{\rm TV}(F,Z) \leq \frac{3}{\sigma^2}  \left( \bE\Big[  \big\|D^2 F \otimes_1 D^2F \big\|^2_{ \cH^{\otimes 2}} \Big] \right)^{1/4} \left(\bE\big[ \| DF \|_{\cH}^4 \big]\right)^{1/4},
\end{align}
where $D^2 F \otimes_1 D^2F$ denotes the 1-contraction between $D^2F$ and itself $($see \eqref{contrac}$)$.
\end{proposition}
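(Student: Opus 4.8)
The plan is to reduce \eqref{2ndP} to a single application of the ordinary (first‑order) Gaussian Poincar\'e inequality \eqref{GPI} to the auxiliary random variable
\[
G:=\langle DF,-DL^{-1}F\rangle_{\cH},
\]
where $L^{-1}$ denotes the pseudo‑inverse of the Ornstein--Uhlenbeck generator $L$. The starting point is the Malliavin--Stein bound obtained from the $F=\delta(v)$ version recalled in Section~\ref{sec11} by taking $v=-DL^{-1}F$ (see also \cite{blue}): for a centered $F\in\mathbb{D}^{1,2}$ with $\bE[F^2]=\sigma^2$ and $Z\sim N(0,\sigma^2)$ one has $d_{\rm TV}(F,Z)\le \tfrac{2}{\sigma^2}\sqrt{\mathrm{Var}(G)}$. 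Since $F\in\mathbb{D}^{2,4}$, the standard smoothing properties of $L^{-1}$ (Meyer's inequalities) give $DF\in\mathbb{D}^{1,4}(\cH)$ and $-DL^{-1}F\in\mathbb{D}^{1,4}(\cH)$, from which one checks that $G\in\mathbb{D}^{1,2}$ --- this is exactly where the fourth moments enter. Hence \eqref{GPI} yields $\mathrm{Var}(G)\le \bE[\|DG\|_{\cH}^2]$, and everything is reduced to estimating $\bE[\|DG\|_{\cH}^2]$.

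First I would differentiate $G$ by the product rule for the Malliavin derivative of an $\cH$‑inner product,
\[
DG=\langle D^2F,-DL^{-1}F\rangle_{\cH}+\langle DF,-D^2L^{-1}F\rangle_{\cH}=:T_1+T_2,
\]
where each $\langle\,\cdot\,,\,\cdot\,\rangle_{\cH}$ contracts a single copy of $\cH$ and leaves an $\cH$‑valued random element; the triangle and Minkowski inequalities then give $\sqrt{\bE[\|DG\|_\cH^2]}\le (\bE[\|T_1\|_\cH^2])^{1/2}+(\bE[\|T_2\|_\cH^2])^{1/2}$. Viewing any $h\in\cH^{\otimes2}$ as a symmetric Hilbert--Schmidt operator on $\cH$, I would use the pathwise bounds $\|T_1\|_\cH\le\|D^2F\|_{\mathrm{op}}\,\|{-DL^{-1}F}\|_\cH$ and $\|T_2\|_\cH\le\|{-D^2L^{-1}F}\|_{\mathrm{op}}\,\|DF\|_\cH$, together with the spectral inequality $\|h\|_{\mathrm{op}}^4\le\|h\otimes_1 h\|_{\cH^{\otimes2}}^2$ (which holds because $\lambda_{\max}^4\le\sum_i\lambda_i^4$), and Cauchy--Schwarz in $\Omega$. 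This reduces matters to the two moment estimates
\[
\bE\big[\|{-DL^{-1}F}\|_\cH^4\big]\le\bE\big[\|DF\|_\cH^4\big],\qquad \bE\big[\|{-D^2L^{-1}F}\|_{\mathrm{op}}^4\big]\le\tfrac{1}{16}\,\bE\big[\|D^2F\|_{\mathrm{op}}^4\big]\le\tfrac{1}{16}\,\bE\big[\|D^2F\otimes_1 D^2F\|_{\cH^{\otimes2}}^2\big].
\]

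The key input is the representation $-D^jL^{-1}F=\int_0^\infty e^{-jt}P_t(D^jF)\,dt$ for $j=1,2$, which follows from $-L^{-1}=\int_0^\infty P_t\,dt$ on centered functionals and the intertwining $D^jP_t=e^{-jt}P_tD^j$, where $(P_t)_{t\ge0}$ is the Ornstein--Uhlenbeck semigroup acting coordinatewise on $\cH^{\otimes j}$‑valued functionals. By Mehler's formula $P_t$ is an averaging operator, so Jensen's inequality applied to the convex functions $\|\cdot\|_\cH$ and $\|\cdot\|_{\mathrm{op}}$ gives the pathwise bounds $\|P_t(DF)\|_\cH\le P_t(\|DF\|_\cH)$ and $\|P_t(D^2F)\|_{\mathrm{op}}\le P_t(\|D^2F\|_{\mathrm{op}})$; combining these with Minkowski's integral inequality in $L^4(\Omega)$, the $L^4(\Omega)$‑contractivity of $P_t$, and the identities $\int_0^\infty e^{-t}\,dt=1$ and $\int_0^\infty e^{-2t}\,dt=\tfrac12$, produces the two displayed estimates (the factor $\tfrac{1}{16}=(\tfrac12)^4$ coming from the $j=2$ case). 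Feeding these back yields $(\bE[\|T_1\|_\cH^2])^{1/2}\le(\bE[\|D^2F\otimes_1 D^2F\|_{\cH^{\otimes2}}^2])^{1/4}(\bE[\|DF\|_\cH^4])^{1/4}$ and $(\bE[\|T_2\|_\cH^2])^{1/2}\le\tfrac12(\bE[\|D^2F\otimes_1 D^2F\|_{\cH^{\otimes2}}^2])^{1/4}(\bE[\|DF\|_\cH^4])^{1/4}$, hence $\sqrt{\bE[\|DG\|_\cH^2]}\le\tfrac32(\bE[\|D^2F\otimes_1 D^2F\|_{\cH^{\otimes2}}^2])^{1/4}(\bE[\|DF\|_\cH^4])^{1/4}$; inserting this into the Malliavin--Stein bound gives \eqref{2ndP} with the constant $2\cdot\tfrac32=3$.

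I expect the real difficulties to lie not in the arithmetic but in the two ``soft'' steps: (i) justifying rigorously that $G\in\mathbb{D}^{1,2}$ with the stated product formula for $DG$, which requires that the Malliavin derivative commutes with the partial contraction of $\cH$‑valued functionals and is routine once one knows $DF,-DL^{-1}F\in\mathbb{D}^{1,4}(\cH)$; and (ii) the semigroup representation together with the operator‑norm Jensen bound $\|P_th\|_{\mathrm{op}}\le P_t(\|h\|_{\mathrm{op}})$ for $\cH^{\otimes2}$‑valued $h$, where one must check that the identification of $\cH^{\otimes2}$ with symmetric operators is compatible with the coordinatewise action of $P_t$, so that the contraction $\otimes_1$ and the operator norm behave as in the finite‑dimensional (matrix) picture exploited above.
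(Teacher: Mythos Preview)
Your argument is correct and is essentially the original proof of Nourdin, Peccati and Reinert \cite{NPR09}; the paper does not give its own proof of this proposition but simply quotes it from \cite{NPR09} and \cite{blue}. The only place where the paper carries out an argument in this spirit is the appendix proof of Proposition~\ref{propAV}, which uses exactly the same ingredients you single out --- the representation $-D^jL^{-1}F=\int_0^\infty e^{-jt}P_t(D^jF)\,dt$, the Poincar\'e inequality applied to $\langle DF,-DL^{-1}G\rangle_{\cH}$, and the $L^4(\Omega)$-contractivity of $P_t$ --- but phrases the contraction bound via H\"older on the integrand rather than through the operator-norm/spectral inequality $\|h\|_{\mathrm{op}}^4\le\|h\otimes_1 h\|_{\cH^{\otimes2}}^2$ that you use. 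Your route has the advantage of making the constant $3$ transparent.
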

It has been known that this inequality usually gives sub-optimal rate.
 In the recent work \cite{Anna} by Vidotto, she provided an improved version of the above inequality,
where she considered an $L^2$-based Hilbert space $\cH = L^2(A,
\nu)$ with $\nu$ a diffusive measure (nonnegative, $\sigma$-finite and
non-atomic) on some measurable space $A$. Let us state this result
for the convenience of readers.

\begin{theorem}[Theorem 2.1 in \cite{Anna}] Let $F\in\mathbb{D}^{2,4}$ with  mean zero and variance $\sigma^2>0$ and let $Z\sim N(0,\sigma^2)$. Suppose $\cH = L^2(A,\nu)$ with $\nu$ a diffusive measure on some measurable space $A$.  Then,
\[
d_{\rm TV}\big(F, Z\big) \leq \frac{4}{\sigma^2} \left[  \int_{A\times A}  \sqrt{ \bE\big[ \big( D^2F\otimes_1 D^2F\big)^2(x,y)  \big]  \times  \bE\big[  (DF)^2(x)  (DF)^2(y)  \big]     } \nu(dx)\nu(dy)   \right]^{\frac12}.
\]
\end{theorem}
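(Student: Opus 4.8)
The plan is to establish the improved second-order Gaussian Poincar\'e inequality (Theorem 2.1 of \cite{Anna}, restated just above) by combining the Malliavin--Stein bound for the total variation distance with a careful $L^2$-pointwise analysis of the relevant Malliavin objects when $\cH = L^2(A,\nu)$. Recall that the standard Malliavin--Stein estimate gives, for a centered $F \in \mathbb{D}^{1,2}$ with variance $\sigma^2$ and $Z \sim N(0,\sigma^2)$,
\[
d_{\rm TV}(F,Z) \leq \frac{2}{\sigma^2}\sqrt{\mathrm{Var}\big(\langle DF, -DL^{-1}F\rangle_{\cH}\big)},
\]
where $L^{-1}$ is the pseudo-inverse of the Ornstein--Uhlenbeck generator. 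So the first step is to reduce the proof to controlling $\mathrm{Var}(\langle DF, -DL^{-1}F\rangle_{\cH})$.

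The second step is to bound this variance, again via the Gaussian Poincar\'e inequality applied to the random variable $G := \langle DF, -DL^{-1}F\rangle_{\cH}$ itself: $\mathrm{Var}(G) \leq \bE[\|DG\|_{\cH}^2]$. Since $\cH = L^2(A,\nu)$, one has the product/chain rule $D_z G = \langle D D_z F, -DL^{-1}F\rangle_{\cH} + \langle DF, -D D_z L^{-1}F\rangle_{\cH}$, and using $D L^{-1} = L^{-1}D$ (up to the appropriate shift on chaoses) together with the contraction-norm inequalities $\|D L^{-1} h\|_{\cH^{\otimes 2}} \le \|Dh\|_{\cH^{\otimes 2}}$ on each chaos, one arrives at a bound of the shape
\[
\bE\big[\|DG\|_{\cH}^2\big] \lesssim \int_{A} \bE\Big[\big(\langle D^2_{z,\bullet}F, D^2_{\bullet,\bullet'}F\rangle_{\cH}\big) \cdot \text{(something involving }DF)\Big]\,\nu(dz),
\]
and then Cauchy--Schwarz in $\Omega$ pointwise in the $A$-variables produces exactly the product $\sqrt{\bE[(D^2F\otimes_1 D^2F)^2(x,y)]\cdot \bE[(DF)^2(x)(DF)^2(y)]}$ under the $\nu(dx)\nu(dy)$ integral. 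The key technical refinement over the Nourdin--Peccati--Reinert version \eqref{2ndP} is that one keeps the expectation \emph{inside} the $A\times A$ integral and applies Cauchy--Schwarz there, rather than applying it globally with a crude $\bE[\|D^2F\|^4]$-type term; this is precisely what the diffusivity (non-atomicity) of $\nu$ buys, since it legitimizes the pointwise manipulations $D_z$ for $\nu$-a.e.\ $z$ and the Fubini exchanges.

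The main obstacle I anticipate is the bookkeeping of the pointwise Malliavin calculus on $L^2(A,\nu)$: making rigorous the identities for $D_z(\langle DF, -DL^{-1}F\rangle_\cH)$ and the interchange of $D_z$ with the inner product and with $L^{-1}$, which requires $F\in\mathbb{D}^{2,4}$ to guarantee all the objects lie in the right spaces and the Fubini/Cauchy--Schwarz steps are justified. One must also track the numerical constant carefully through the chain of inequalities (the $2$ from Malliavin--Stein, a $2$ from the product rule splitting into two symmetric terms, and the contraction estimates) to land on the stated factor $4/\sigma^2$. Once the pointwise framework is set up, the remaining steps are routine applications of Cauchy--Schwarz and the chaos-wise operator norm bounds for $L^{-1}$ and $D$.
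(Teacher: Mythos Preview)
Your proposal follows essentially the same route as the paper's sketch (which in turn follows Vidotto): Malliavin--Stein bound \eqref{MSbdd}, then Gaussian Poincar\'e on $G=\langle DF,-DL^{-1}F\rangle_\cH$, then a product-rule splitting of $DG$ into two terms, and finally Cauchy--Schwarz in $\Omega$ pointwise in the $A$-variables.

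The one place where your description is imprecise is the removal of $L^{-1}$. You write ``contraction-norm inequalities $\|DL^{-1}h\|_{\cH^{\otimes 2}}\le\|Dh\|_{\cH^{\otimes 2}}$ on each chaos,'' but an $\cH$-norm (i.e.\ $L^2(\Omega)$-level) bound is not what is needed here: after you apply Cauchy--Schwarz in $\Omega$ pointwise in $(x,y)\in A\times A$, you are left with $L^4(\Omega)$-norms of objects like $(-DL^{-1}F)(x)$ and $(-D^2L^{-1}F)(x,y)$, and you must dominate these by $\|D_xF\|_4$ and $\|D^2_{x,y}F\|_4$. The paper (and Vidotto) do this via the Mehler representation
\[
-DL^{-1}F=\int_0^\infty e^{-t}P_tDF\,dt,\qquad -D^2L^{-1}F=\int_0^\infty e^{-2t}P_tD^2F\,dt,
\]
followed by Jensen on the probability measures $e^{-t}dt$, $2e^{-2t}dt$ and the contraction of $P_t$ on $L^4(\Omega)$. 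Your chaos-wise commutation $-DL^{-1}=(I-L)^{-1}D$ is correct, but to turn it into the needed pointwise $L^4$ estimate you still end up invoking exactly this semigroup representation (since $(I-L)^{-1}=\int_0^\infty e^{-t}P_t\,dt$). Once you phrase that step via $P_t$, your argument and the paper's coincide.
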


The proof of the above inequality follows from the general Malliavin-Stein bound
\begin{align}\label{MSbdd}
   d_{\rm TV}\big(F, Z\big) \leq \frac{2}{\sigma^2} \bE \left(\big\vert \sigma^2 - \langle DF, - DL^{-1}F \rangle_{\cH} \big\vert  \right)
\end{align}
(see  \cite[equation (5.1.4)]{blue}\footnote{Unlike in  \cite{blue}, we do not assume   $F$   to have a density; in fact,   it suffices to use \cite[Proposition 2.1.1]{GZ18} and \cite[(5.1.1)]{blue} to establish   \cite[equation (5.1.4)]{blue}.  }) and Vidotto's new bound of
\[
\qquad\qquad \bE\big[ ( \text{Cov}(F,G) - \langle DF, - DL^{-1}G \rangle_{\cH} )^2 \big]~\text{for centered $F, G\in\mathbb{D}^{2,4}$}
\]
(see \cite[Proposition 3.2]{Anna}), where $L^{-1}$ is the pseudo-inverse of the Ornstein-Uhlenbeck operator $L$; see Section \ref{sec21} for the definitions.

Recall that our  Hilbert space $\mathcal{H}$    is the  completion of $C^\infty_c(\bR_+\times\bR^d)$ under the inner product
\eqref{defcH}.
The Hilbert space $\mathcal{H}$ contains generalized functions, but fortunately the objects $D^2u(t,x)$,  $Du(t,x)$  are random functions in view of Theorem \ref{MR1}. By adapting    Vidotto's proof to our setting, we have the following   version of second-order Gaussian Poincar\'e inequality. Note we write $f\in | \cH^{\otimes p }|$ to mean $f$ is a real valued function and $\bullet\mapsto | f(\bullet) |$ belongs to $\cH^{\otimes p }$.

\begin{proposition}\label{2nd-tool}
   If $F\in\mathbb{D}^{2,4}$  has mean zero and variance $\sigma^2\in(0,\infty)$ such that with probability 1,  $DF\in| \cH|$ and $D^2F\in|\cH^{\otimes 2}|$, then
\[
d_{\rm TV}\big( F,  Z\big) \leq \frac{4}{\sigma^2} \sqrt{\mathcal{A}},
\]
where $Z\sim N(0,\sigma^2)$ and
\begin{align*}
\mathcal{A}:&= \int_{\bR_+^6\times\bR^{6d}} drdr' dsds' d\theta d\theta' dzdz' dydy' dwdw' \gamma_0(\theta - \theta') \gamma_0(s-s') \gamma_0(r-r') \\
&\quad\times  \gamma(z-z') \gamma(w-w') \gamma(y-y')  \| D_{r,z}D_{\theta,w}F \|_4  \| D_{s,y}D_{\theta',w'}F \|_4  \| D_{r',z'}F\|_4 \| D_{s', y' }F \|_4.
\end{align*}
\end{proposition}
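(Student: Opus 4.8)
The plan is to derive Proposition \ref{2nd-tool} from Vidotto's bound (Theorem 2.1 in \cite{Anna}, with the sharper variant \cite[Proposition 3.2]{Anna}) by a suitable ``lifting'' of our Hilbert space $\cH$ to a genuine $L^2$ space. The point is that $\cH$ is not of the form $L^2(A,\nu)$ with $\nu$ diffusive, so Vidotto's theorem does not apply verbatim; however, the inner product \eqref{defcH} is an integral against the kernel $\gamma_0(t-s)\gamma(x-y)$ over $\bR_+\times\bR^d$. The first step is therefore to introduce the auxiliary space $\cH_0 = L^2\big(\bR_+\times\bR^d, dt\,dx\big)$ and the bounded injection $\iota:\cH_0\hookrightarrow\cH$ (densely defined on smooth compactly supported functions and extended by continuity using that $\gamma_0,\gamma$ are locally integrable / tempered), noting that for $f,g$ nice, $\langle f,g\rangle_\cH=\langle f,\,(\gamma_0\otimes\gamma)\ast g\rangle_{\cH_0}$ where the convolution is in time and space. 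Because, by Theorem \ref{MR1}, both $Du(t,x)$ and $D^2u(t,x)$ are honest (random) functions lying pathwise in $|\cH|$ and $|\cH^{\otimes2}|$ respectively, we may regard $DF\in\cH_0$, $D^2F\in\cH_0^{\otimes2}$ as ordinary $L^2$ functions and run Vidotto's argument on $\cH_0$, at the price of carrying the kernels $\gamma_0\gamma$ explicitly wherever an inner product in $\cH$ (or a contraction) appears.

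Concretely, I would repeat the chain \eqref{MSbdd} $\Rightarrow$ \cite[Proposition 3.2]{Anna} $\Rightarrow$ the stated bound, keeping track of the kernels. Starting from $d_{\rm TV}(F,Z)\le\frac{2}{\sigma^2}\,\bE\big\vert\sigma^2-\langle DF,-DL^{-1}F\rangle_\cH\big\vert \le \frac{2}{\sigma^2}\big(\bE\big[(\sigma^2-\langle DF,-DL^{-1}F\rangle_\cH)^2\big]\big)^{1/2}$, one applies Vidotto's second-order estimate for $\bE[(\mathrm{Cov}(F,F)-\langle DF,-DL^{-1}F\rangle_\cH)^2]$; in the $L^2(A,\nu)$ setting this is bounded by a double integral over $A\times A$ of $\sqrt{\bE[(D^2F\otimes_1 D^2F)^2]\cdot\bE[(DF)^2(DF)^2]}$. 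Transporting this to $\cH=\cH_0$-with-kernel, the $\otimes_1$ contraction in $\cH^{\otimes2}$ becomes a contraction in $\cH_0^{\otimes2}$ conjugated by the kernel operator, so that $D^2F\otimes_1 D^2F$ evaluated ``at $(r,z),(s,y)$'' picks up one factor $\gamma_0(\theta-\theta')\gamma(w-w')$ from the contracted variable $(\theta,w)$ vs $(\theta',w')$, and the outer $\nu(dx)\nu(dy)$ integration picks up two more kernel pairs $\gamma_0(r-r')\gamma(z-z')$ and $\gamma_0(s-s')\gamma(y-y')$ pairing the ``free'' variables of the two copies. Expanding $(D^2F\otimes_1 D^2F)^2$ and $(DF)^2(DF)^2$ and using $\sqrt{ab}$-Cauchy–Schwarz to split the product of fourth moments into the product of $L^4(\Omega)$-norms $\|D_{r,z}D_{\theta,w}F\|_4\|D_{s,y}D_{\theta',w'}F\|_4\|D_{r',z'}F\|_4\|D_{s',y'}F\|_4$, one lands exactly on the sixfold space-time integral $\mathcal{A}$ defining the right-hand side. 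The replacement of $\sqrt{\bE[X^2]}$-type quantities by products of $L^4$-norms is the same ``Cauchy–Schwarz in $\Omega$, then Minkowski/Fubini'' maneuver used in \cite{Anna}; I would carry it out after reducing to $\cH_0$.

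The main obstacle, and the step requiring genuine care, is the rigorous justification of the lifting $\cH\rightsquigarrow\cH_0$ together with Fubini: one must verify that $D F, D^2 F$ really can be identified with elements of $\cH_0, \cH_0^{\otimes2}$ (equivalently, that the right-hand side $\mathcal{A}$ is finite and dominates, rather than merely formally equals, the $\cH$-expressions), that the kernel operator $f\mapsto(\gamma_0\otimes\gamma)\ast f$ is well-defined and positive on the relevant functions, and that all the interchanges of $\bE$ and the deterministic integrals against $\gamma_0,\gamma$ are legitimate — this is where the hypotheses $DF\in|\cH|$, $D^2F\in|\cH^{\otimes2}|$ a.s. and $F\in\mathbb{D}^{2,4}$ are used, via Minkowski's integral inequality to pull $\|\cdot\|_4$ inside. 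A secondary technical point is that Vidotto's Theorem 2.1 is stated for $\nu$ diffusive on $A$; since $\gamma$ may be a genuine measure (not absolutely continuous) when $d=1$, one either works on $A=\bR_+\times\bR^d$ with Lebesgue measure (diffusive) and keeps $\gamma$ purely as a kernel in the bilinear form — which is the route above — or separately checks that the proof of \cite[Proposition 3.2]{Anna} only uses the diffusivity of $\nu$ through the absence of diagonal atoms, which holds here since $dt\,dx$ has no atoms. Modulo this bookkeeping, the inequality follows, and I would present the kernel-tracking computation as the heart of the proof.
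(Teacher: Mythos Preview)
Your approach is essentially correct and follows the same route as the paper: both start from the Malliavin--Stein bound \eqref{MSbdd}, pass to $\sqrt{\operatorname{Var}(\langle DF,-DL^{-1}F\rangle_\cH)}$ by Cauchy--Schwarz, and then adapt Vidotto's Proposition~3.2 to the kernel-based Hilbert space $\cH$. The paper packages this adaptation as the separate Proposition~\ref{propAV} (proved in Appendix~\ref{pfAV}) and then specializes to $F=G$; the key ingredients there --- the Gaussian Poincar\'e inequality, the Mehler representation $-DL^{-1}F=\int_0^\infty e^{-t}P_t\,DF\,dt$, the contraction identity $\|\langle D^2F,P_tDF\rangle_\cH\|_\cH^2=\langle D^2F\otimes_1 D^2F,(P_tDF)\otimes(P_tDF)\rangle_{\cH^{\otimes2}}$, and the $L^4(\Omega)$-contraction of $P_t$ --- are exactly what lies behind your phrase ``run Vidotto's argument,'' and your kernel count (one pair from the contraction, two from the outer $\cH^{\otimes2}$-pairing) is correct.

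One correction worth making: the framing via an embedding $\iota:\cH_0=L^2(\bR_+\times\bR^d)\hookrightarrow\cH$ is both unnecessary and not quite right. The hypothesis $DF\in|\cH|$ does \emph{not} say $DF\in L^2$ --- for $d=2$ the fundamental solution $G_t$ is not in $L^2(\bR^2)$ --- and there is no bounded injection of $L^2$ into $\cH$ in general. What is actually used is only that $DF$ and $D^2F$ are genuine measurable functions for which all integrals against the kernels $\gamma_0,\gamma$ converge absolutely, so that the $\cH$- and $\cH^{\otimes2}$-inner products and contractions can be written out pointwise and Fubini/H\"older can be applied. The paper works directly in $\cH$ with its kernel structure, never introducing an auxiliary $L^2$ space; your second paragraph already captures the right computation once the lifting language is dropped.
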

As mentioned before, Proposition \ref{2nd-tool} will follow from the
Malliavin-Stein bound  \eqref{MSbdd} and {Cauchy-Schwarz} inequality,
taking into account that, by the duality relation \eqref{IbP}, we
have that $\bE \left( \langle DF, - DL^{-1}F \rangle_{\cH}  \right)=
\bE[ F^2]=\sigma^2$. Indeed, we can write
\begin{align*}
d_{\rm TV}(F, Z) &  \leq \frac{2}{\sigma^2} \bE \left(\big\vert \sigma^2 - \langle DF, - DL^{-1}F \rangle_{\cH} \big\vert  \right)   \leq \frac{2}{\sigma^2}  \sqrt{   \text{Var}\big(  \langle DF, - DL^{-1}F \rangle_{\cH} \big) }   \\
&\leq  \frac{4}{\sigma^2}  \sqrt{\mathcal{A}} \quad\text{by Proposition \ref{propAV} below.}
\end{align*}

\begin{proposition} \label{propAV}     If $F, G\in\mathbb{D}^{2,4}$ have mean zero such that with probability one, $DF, DG\in | \cH|$ and $D^2F, D^2G\in| \cH^{\otimes 2}|$,
 then
\begin{align}
{\rm Var}\Big( \langle DF, - DL^{-1}G \rangle_\mathcal{H} \Big) =    \bE\big[ ( \text{\rm Cov}(F,G) - \langle DF, - DL^{-1}G \rangle_\mathcal{H} )^2 \big]  \leq 2 A_1 + 2A_2, \label{propA}
\end{align}
where
\begin{align*}
A_1:&= \int_{\bR_+^6\times\bR^{6d}} drdr' dsds' d\theta d\theta' dzdz' dydy' dwdw' \gamma_0(\theta - \theta') \gamma_0(s-s') \gamma_0(r-r') \\
&\quad\times  \gamma(z-z') \gamma(w-w') \gamma(y-y')  \| D_{r,z}D_{\theta,w}F \|_4  \| D_{s,y}D_{\theta',w'}F \|_4  \| D_{r',z'}G\|_4 \| D_{s', y' }G \|_4
\end{align*}
and $A_2$ is defined by switching the positions of $F, G$ in the definition of $A_1$.
 \end{proposition}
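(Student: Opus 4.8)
The plan is as follows. First, the stated identity ${\rm Var}\big(\langle DF,-DL^{-1}G\rangle_\cH\big)=\bE\big[(\text{Cov}(F,G)-\langle DF,-DL^{-1}G\rangle_\cH)^2\big]$ is just the duality relation \eqref{IbP}: since $\delta D=-L$ and $LL^{-1}G=G$ for the centered random variable $G$, one gets $\bE\big[\langle DF,-DL^{-1}G\rangle_\cH\big]=\bE\big[F\,\delta(-DL^{-1}G)\big]=\bE[FG]=\text{Cov}(F,G)$, so the left-hand side of \eqref{propA} is precisely the variance of $\Phi:=\langle DF,-DL^{-1}G\rangle_\cH$. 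It therefore suffices to prove ${\rm Var}(\Phi)\le 2A_1+2A_2$.

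To this end I would apply the first-order Gaussian Poincar\'e inequality \eqref{GPI} to $\Phi$, which lies in $\mathbb{D}^{1,2}$ since $F,G\in\mathbb{D}^{2,4}$; this gives ${\rm Var}(\Phi)\le\bE\big[\|D\Phi\|_\cH^2\big]$. Under the standing hypotheses $DF,DG\in|\cH|$ and $D^2F,D^2G\in|\cH^{\otimes2}|$ almost surely, the inner product $\Phi$ can be written as the absolutely convergent integral $\int D_{r,z}F\,(-DL^{-1}G)_{r',z'}\,\gamma_0(r-r')\gamma(z-z')\,drdzdr'dz'$, and differentiating under the integral sign (Leibniz rule for $D$) yields $D_{\theta,w}\Phi=T_1(\theta,w)+T_2(\theta,w)$ with
\[
T_1(\theta,w)=\big\langle D_{\theta,w}DF,\ -DL^{-1}G\big\rangle_\cH,\qquad
T_2(\theta,w)=\big\langle DF,\ D_{\theta,w}\big(-DL^{-1}G\big)\big\rangle_\cH,
\]
where $(D_{\theta,w}DF)_{r,z}=D_{r,z}D_{\theta,w}F$ and $D_{\theta,w}(-DL^{-1}G)$ is the $\cH$-valued random field obtained by freezing one coordinate of the $\cH^{\otimes2}$-valued random variable $D(-DL^{-1}G)$. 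Since $\|T_1+T_2\|_\cH^2\le 2\|T_1\|_\cH^2+2\|T_2\|_\cH^2$, the problem reduces to showing $\bE[\|T_1\|_\cH^2]\le A_1$ and $\bE[\|T_2\|_\cH^2]\le A_2$.

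For the first bound I would expand $\|T_1\|_\cH^2$ as a sixfold space-time integral against $\gamma_0(r-r')\gamma(z-z')\,\gamma_0(\theta-\theta')\gamma(w-w')\,\gamma_0(s-s')\gamma(y-y')$ of $D_{r,z}D_{\theta,w}F\cdot(-DL^{-1}G)_{r',z'}\cdot D_{s,y}D_{\theta',w'}F\cdot(-DL^{-1}G)_{s',y'}$, take expectations (Fubini being licensed by the $|\cH|,|\cH^{\otimes2}|$ assumptions together with $F,G\in\mathbb{D}^{2,4}$), and apply the generalized H\"older inequality with four exponents equal to $4$ inside the expectation. The key analytic input is then the pointwise (in the space-time variable) domination of the $L^4(\Omega)$-norm of $-DL^{-1}G$ by that of $DG$: by Mehler's formula one has $-DL^{-1}G=\int_0^\infty e^{-t}P_t(DG)\,dt$, where $\{P_t\}$ is the Ornstein--Uhlenbeck semigroup, so that $\big\|(-DL^{-1}G)_a\big\|_4\le\int_0^\infty e^{-t}\|P_t(D_aG)\|_4\,dt\le\|D_aG\|_4$ because each $P_t$ is an $L^4(\Omega)$-contraction. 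Feeding $\|(-DL^{-1}G)_{r',z'}\|_4\le\|D_{r',z'}G\|_4$ and $\|(-DL^{-1}G)_{s',y'}\|_4\le\|D_{s',y'}G\|_4$ into the integral reproduces exactly the integrand of $A_1$. The estimate $\bE[\|T_2\|_\cH^2]\le A_2$ is proved the same way, the only change being that $D(-DL^{-1}G)\in\cH^{\otimes2}$ admits the analogous Mehler representation $D(-DL^{-1}G)=\int_0^\infty e^{-2t}P_t(D^2G)\,dt$, whence $\|(D(-DL^{-1}G))_{a,b}\|_4\le\tfrac12\|D^2_{a,b}G\|_4\le\|D^2_{a,b}G\|_4$; after the H\"older step one recovers the integrand of $A_2$ (which is $A_1$ with $F$ and $G$ interchanged) up to a harmless relabelling of the dummy variables, using that the kernels $\gamma_0,\gamma$ are even. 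Combining the two estimates with the factor $2$ from the parallelogram inequality gives ${\rm Var}(\Phi)\le 2A_1+2A_2$.

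The step I expect to require the most care is not any single estimate but the rigorous justification of the manipulations underpinning the whole argument: interchanging the Malliavin derivative with the $\cH$-valued integral, applying Fubini to the sixfold space-time integrals, and establishing the Mehler-type identity $D(-DL^{-1}\,\cdot)=\int_0^\infty e^{-2t}P_t(D^2\,\cdot)\,dt$ for $\cH^{\otimes2}$-valued random variables (which one should first verify on a fixed Wiener chaos and then extend by density). This is exactly the place where the standing hypotheses $DF,DG\in|\cH|$, $D^2F,D^2G\in|\cH^{\otimes2}|$ (a.s.) and $F,G\in\mathbb{D}^{2,4}$ are used, since they guarantee that every integral above converges absolutely so that all the formal operations are legitimate.
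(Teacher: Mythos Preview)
Your proposal is correct and follows essentially the same route as the paper: duality to get the mean, the Gaussian Poincar\'e inequality \eqref{GPI}, the Leibniz splitting $D\Phi=T_1+T_2$ (the paper cites this as Lemma~3.2 of \cite{NPR09}), and then the Mehler representations $-DL^{-1}G=\int_0^\infty e^{-t}P_t(DG)\,dt$ and $-D^2L^{-1}G=\int_0^\infty e^{-2t}P_t(D^2G)\,dt$ combined with the $L^4(\Omega)$-contraction of $P_t$ and a fourfold H\"older. The only cosmetic difference is in the order of operations---the paper first pulls the $e^{-t}dt$ integral outside the $\cH$-norm via a Jensen-type inequality and then expands, whereas you expand the sixfold integral first and dominate $\|(-DL^{-1}G)_a\|_4$ pointwise by $\|D_aG\|_4$ via Minkowski; both yield the same bound.
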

 For the sake of completeness, we sketch the  proof of Proposition \ref{propAV} in  Appendix \ref{pfAV}.
Once we have the information on the growth order of $\sigma_R(t)$,
we can apply   Theorem \ref{MR1}  and Proposition \ref{propAV} to
obtain the error bounds in Theorem \ref{MR2}.   The proof of Theorem
\ref{MR2} will be given in Section \ref{sec4}: In Section
\ref{sec41}, we will establish the limiting covariance structure,
which will be used to obtain the quantitative CLTs in Section
\ref{sec42};   Proposition \ref{propAV}, combined with a
multivariate Malliavin-Stein bound (see \emph{e.g.} \cite[Theorem
6.1.2]{blue}),
 also gives us easy access to the   convergence of finite-dimensional distributions (\emph{f.d.d. convergence}) for part (1),
 while in the other parts, the \emph{f.d.d.} convergence follows easily from the dominance of the first chaotic component of $F_R(t)$; finally in Section \ref{sec43},  we establish the functional CLT by showing the required tightness, which will follow by verifying
the well-known criterion of Kolmogorov-Chentsov (see \emph{e.g.} \cite[Corollary 16.9]{OK}).

\subsection{Absolute continuity of the law of  the solution to equation \eqref{wave}}
\label{sec12}

In this part, we fix the following extra hypothesis on the correlation kernels $\gamma_0,\gamma$.
\begin{align*}
{\bf (H2)}
\begin{cases}
\text{ $\gamma_0 =\mathcal{F}\mu_0$ and $\gamma = \mathcal{F} \mu$,  where   $\mu_0, \mu$ are   nonnegative tempered measures} \\
\text{  and have   strictly positive densities with respect to the Lebesgue measure. }
\end{cases}
\end{align*}

The following is  the main result of this section.
\begin{theorem}
\label{MR3}
 Let  $d\in \{1,2\}$ and assume that Hypothesis ${\bf (H2)}$  holds. In addition, assume that Hypothesis ${\bf (H1)}$ holds  if $d=2$. Let $u$ be the solution to
\eqref{wave}. For any $t>0$ and $x \in \bR^d$,    the law of $u(t,x)$ restricted to the set $\bR
\verb2\2 \{0\}$ is absolutely continuous with respect to the
Lebesgue measure on $\bR \verb2\2 \{0\}$.
\end{theorem}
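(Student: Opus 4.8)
The plan is to verify a local version of the Bouleau--Hirsch criterion: a random variable $F\in\mathbb{D}^{1,2}$ (or $\mathbb{D}^{1,\mathrm{loc}}$) whose Malliavin derivative satisfies $\|DF\|_{\cH}>0$ almost surely on an event $A$ has a law that, restricted to $A$, is absolutely continuous with respect to Lebesgue measure. By Theorem \ref{MR1} we already know $u(t,x)\in\mathbb{D}^\infty$, so the only thing to check is the nondegeneracy $\|Du(t,x)\|_{\cH}>0$ a.s.\ on the event $\{u(t,x)\neq 0\}$. Thus I would take $A=\{u(t,x)\neq 0\}$ and show $\bP\big(\{u(t,x)\neq 0\}\cap\{\|Du(t,x)\|_{\cH}=0\}\big)=0$.

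The key step is to produce a useful lower bound on $\|Du(t,x)\|_{\cH}$. Using the Wiener chaos expansion \eqref{WCE} together with the closability of $D$, one has the Clark--Ocone-type/divergence relation $D_{s,y}u(t,x)=G_{t-s}(x-y)u(s,y)+\int_s^t\int_{\bR^d}G_{t-r}(x-z)D_{s,y}u(r,z)\,W(dr,dz)$, and in particular the first chaotic layer of $D_{s,y}u(t,x)$ is exactly $G_{t-s}(x-y)$ (from $m!\widetilde f_{t,x,m}$ lower bound in \eqref{goalz} with $m=1$, this is $f_{t,x,1}(s,y)=G_{t-s}(x-y)$). The idea is then to compare $Du(t,x)$ with its projection onto a well-chosen direction in $\cH$. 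Concretely, fix a small $\e\in(0,t)$ and an approximate identity and consider $\langle Du(t,x),\phi\rangle_{\cH}$ for test functions $\phi$ supported near time $t$. Under Hypothesis $\bf (H2)$ the spectral densities of $\gamma_0$ and $\gamma$ are strictly positive, which guarantees that $\langle\cdot,\cdot\rangle_{\cH}$ is a genuine (strictly positive-definite) inner product on a rich class of functions — this is precisely what rules out the pathology that $Du(t,x)$ could be a nonzero element of $\cH$ with zero norm, or that the ``principal part'' $G_{t-\bullet}(x-\bullet)u(\bullet,\bullet)$ could be orthogonal to everything. So on the event $\{u(t,x)\neq 0\}$, by continuity of $s\mapsto u(s,x)$ in $L^2(\Omega)$ (and a.s.\ along a subsequence) one expects $u(s,y)$ to be bounded away from $0$ for $(s,y)$ in a small space-time neighborhood of $(t,x)$, whence the function $(s,y)\mapsto\mathbf 1_{[t-\e,t]}(s)G_{t-s}(x-y)u(s,y)$ has strictly positive $\cH$-norm, and $\|Du(t,x)\|_{\cH}$ dominates (a piece of) it.

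Making ``$u(s,y)$ is bounded away from $0$ near $(t,x)$ on the event $\{u(t,x)\neq0\}$'' precise is the main obstacle, since $u$ need not be continuous in $(s,y)$ jointly a.s.\ without extra work, and the event depends on $u(t,x)$ itself. I would handle this by localizing: write $\Omega=\bigcup_{k}\{|u(t,x)|>1/k\}$ and work on each piece, and use the $L^p(\Omega)$-continuity of $(s,y)\mapsto u(s,y)$ (which follows from the moment bounds behind Theorem \ref{MR1} and the chaos expansion) together with Chebyshev to control $\bP(|u(s,y)-u(t,x)|>\delta)$ for $(s,y)$ near $(t,x)$; then an averaging argument over a small neighborhood, combined with Fatou, gives $\int_{t-\e}^t\!\!\int_{B_\rho(x)}G_{t-s}(x-y)^2\,u(s,y)^2\,\kappa(ds,dy)>0$ on $\{u(t,x)\neq0\}$ for a suitable reference measure $\kappa$ built from $\gamma_0,\gamma$, hence $\|Du(t,x)\|_{\cH}>0$ there. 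Alternatively, and perhaps more cleanly, one can invoke the local Bouleau--Hirsch criterion directly with the localizing sequence of events $\{|u(t,x)|>1/k\}$ and reduce to showing that $\mathbf 1_{\{|u(t,x)|>1/k\}}\,u(t,x)\in\mathbb{D}^{1,2}_{\mathrm{loc}}$ with nonvanishing derivative, which again comes down to the strict positivity of the $\cH$-inner product from $\bf (H2)$ applied to the leading term $G_{t-\bullet}(x-\bullet)u(\bullet,\bullet)$. For $d=2$ the hypothesis $\bf (H1)$ is what ensures all the integrals in these estimates (in particular the $\cH^{\otimes m}$-norms of $\widetilde f_{t,x,m}$) are finite, so the argument goes through verbatim in both dimensions.
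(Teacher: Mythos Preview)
Your overall strategy is on the right track---localize to $\Omega_m=\{|u(t,x)|>1/m\}$, use the integral equation for $Du$, and lower-bound by the principal part $G_{t-\bullet}(x-\bullet)u(\bullet,\bullet)$---but several steps are underspecified in ways that matter, and one of them is a genuine gap.

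The most serious issue is how you use Hypothesis~${\bf (H2)}$. You say it makes $\langle\cdot,\cdot\rangle_{\cH}$ strictly positive-definite and so ``rules out the pathology that $Du(t,x)$ could be a nonzero element of $\cH$ with zero norm,'' but that is not the obstacle. The real difficulty is that $\|Du(t,x)\|_{\cH}^2=\int_0^t\int_0^t\gamma_0(r-s)\langle D_{r,\bullet}u(t,x),D_{s,\bullet}u(t,x)\rangle_0\,dr\,ds$, and the off-diagonal inner products can be negative, so restricting the time variable to $[t-\e,t]$ does not obviously give a lower bound. Your proposed fix---project onto test functions $\phi$ supported near time $t$---does not help, because $\langle Du,\phi\rangle_{\cH}$ still couples all times through $\gamma_0$. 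The paper handles this with a separate lemma (Lemma~\ref{pos-norm}): under ${\bf (H2)}$, if $\int_0^t\|D_{r,\bullet}u(t,x)\|_0^2\,dr>0$ then $\|Du(t,x)\|_{\cH}>0$. The proof goes through the Fourier side: if $\|S\|_{\cH}=0$ then the space-time Fourier transform of $S$ vanishes $\mu_0\otimes\mu$-a.e., hence (by strict positivity of the densities) Lebesgue-a.e., hence $S=0$. This reduction to the white-in-time quantity is the step that actually consumes ${\bf (H2)}$, and it is missing from your argument.

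Two further points. First, once one works with $\int_{t-\delta}^t\|D_{r,\bullet}u(t,x)\|_0^2\,dr$ and the equation $D_{r,\bullet}u(t,x)=G_{t-r}(x-\bullet)u(r,\bullet)+\overline{\delta}(K^{(r)})$, the Skorokhod remainder $I(\delta)=\int_{t-\delta}^t\|\overline{\delta}(K^{(r)})\|_0^2\,dr$ must be shown small. This needs the bound $\bE\|\overline{\delta}(U)\|_0^2\le\bE\|U\|_{\cH\otimes\cP_0}^2+\bE\|DU\|_{\cH\otimes\cH\otimes\cP_0}^2$, and the second term requires uniform control of $\|D^2_{(\theta,w),(r,z)}u(s,y)\|_2$ from Theorem~\ref{MR1} (Corollary~\ref{D2-norm}); you never invoke the second-order estimates. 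Second, your route to ``$u(s,y)$ bounded away from $0$ near $(t,x)$'' via $L^p$-continuity plus Chebyshev and Fatou is not workable as stated: $L^2$-continuity does not give a.s.\ positivity on the random event $\Omega_m$, and passing to subsequences does not help when the neighborhood shrinks. The paper avoids this entirely by estimating expectations directly: it writes $\int_{t-\delta}^t\|G_{t-r}(x-\bullet)u(r,\bullet)\|_0^2\,dr\ge m^{-2}\psi_0(\delta)-J(\delta)$ on $\Omega_m$, bounds $\bE|J(\delta)|\le 2C_t^*g_{t,x}(\delta)\psi_0(\delta)$ via Cauchy--Schwarz and the $L^2$-modulus $g_{t,x}(\delta)\to0$, bounds $\bE[I(\delta)]\le C\delta\phi(\delta)$, and concludes by Markov's inequality that $\bP\big(\{\int_0^t\|D_{r,\bullet}u\|_0^2\,dr=0\}\cap\Omega_m\big)\le 2m^2\big(C\delta\,\phi(\delta)/\psi_0(\delta)+C'g_{t,x}(\delta)\big)\to0$ as $\delta\to0$, using $\phi(\delta)\le\Gamma_\delta\psi_0(\delta)$.
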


Let us sketch the proof of Theorem \ref{MR3}.
In view of the Bouleau-Hirsch criterion for absolute continuity (see \cite{BH86}),  it suffices to prove that for each $m\ge 1$,
\begin{equation} \label{EQ1}
\|D u(t,x)\|_{\cH}>0 \quad \mbox{a.s. on} \ \Omega_m,
\end{equation}
where $\Omega_m =\{ |u(t,x) | \geq   1/m\}$.  Notice that
\[
\|D u(t,x)\|^2_{\cH} = \int_0^t \int_0^t  \gamma_0(r-s) \langle
D_{r,\bullet}u(t,x) , D_{s,\bullet}u(t,x) \rangle_{0}
  drds,
\]
where $\mathcal{P}_0$
is the completion of $C^\infty_c(\bR^d)$ with respect to the inner
product  $\langle \cdot, \cdot \rangle_0 $ introduced in
\eqref{H0}. The usual approach to show the positivity of
this norm  is to get a lower bound for this integral by integrating
on a small interval $[ t-\delta, t]^2$ and use that, for $r$ close
to $t$, $D_{r,y}u(t,x)$ behaves as $G_{t-r}(x-y) u(s,y)$ (see, e.g.,
\cite{NQ}). However, for $r\not =s$,  the inner product $ \langle
D_{r,\bullet}u(t,x) , D_{s,\bullet}u(t,x) \rangle_{0} $ is not
necessarily non-negative. Our strategy to overcome this difficulty
consists in making use of  Hypothesis ${\bf (H2)}$ in order to show
that
\[
\int_0^t   \| D_{r,\bullet}u(t,x)  \| _{0} ^2 dr >0 ~ ~ \text{implies ~ $\|D u(t,x)\|_{\cH}>0 $  (see Lemma  \ref{pos-norm}).}
\]
 This
allows us to reduce the problem to the non-degeneracy of
$\int_{t-\delta} ^t   \| D_{r,\bullet}u(t,x)  \| _{0} ^2 dr$ for
$\delta$ small enough, which  can be handled by the usual arguments.
At this point, we will make use of the estimates provided in Theorem
\ref{MR1}.

For $d=1$,  Theorem \ref{MR3} was proved in \cite{BQS} under
stronger assumptions on the covariance structure. The result in Theorem \ref{MR3} for
$d=2$ is new. Indeed, the study of the existence (and smoothness) of the density for the stochastic wave equation
has been extensively revisited over the last three decades. We refer the readers to \cite{CN,MS,MMS,QS1,QS2,NQ,SS}. In all these articles,
the authors considered a stochastic wave equation of the form
\[
 \frac{\partial^2 u}{\partial t^2}(t,x)=\Delta u(t,x)+b(u(t,x))+\sigma(u(t,x)) \dot{\mathfrak{X}}(t,x),
\]
on $\bR_+\times \bR^d$, with $d\geq 1$. Here, $\dot{\mathfrak{X}}$ denotes a space-time white noise in the case $d=1$, or a
Gaussian noise that is white in time and has a spatially homogeneous   correlation (slightly more general than that of $W$) in the
case $d\geq 2$. The functions $b,\sigma$ are usually assumed to be globally Lipschitz, and such that the following non-degeneracy condition
is fulfilled: $|\sigma(z)|\geq C>0$, for all $z\in \bR$. The temporal nature of the noise $\dot{\mathfrak{X}}$ made possible to interpret
the solution in the classical Dalang-Walsh sense, making use of all needed martingale techniques. The first attempt to consider
a Gaussian noise that is colored in time was in the paper \cite{BQS}, where the hyperbolic Anderson model with spatial dimension one was considered. As mentioned above, in that paper the existence of density was proved under a slightly stronger assumption than
Hypothesis ${\bf (H2)}$.

\medskip

The rest of this paper is organized as follows.  Section 2 contains preliminary results and the proofs of our main results -- Theorems \ref{MR1}, \ref{MR2} and \ref{MR3} --  are given in Sections \ref{sec3}, \ref{sec4} and \ref{sec5}, respectively.

\medskip

\noindent{\bf Acknowledgement.} The authors would like to thank Wangjun Yuan for  carefully proofreading the manuscript and providing a list of typos.

\section{Preliminary results}\label{sec2}

 This section is devoted to presenting some basic elements of the Malliavin calculus and
 collecting some preliminary results that will be needed in the sequel.

 \subsection{Basic   Malliavin calculus} \label{sec21}

  Recall  that the Hilbert space $\mathcal{H}$ is the completion of $C^\infty_c(\bR_+\times\bR^d)$ under the inner product \eqref{defcH} that can be written as
  \begin{align*}
  \big\langle \psi, \phi \big\rangle_{\cH} = \int_{\bR_+^2}dsdt  \gamma_0(t-s) \big\langle \psi(t, \bullet), \phi(s, \bullet) \big\rangle_{0} \quad \text{for $\psi, \phi\in  C^\infty_c(\bR_+\times\bR^d)$,}
    \end{align*}
where
\begin{equation} \label{H0}
\langle h, g\rangle_0= \int_{\bR^{2d}}dzdz' \gamma(z-z') h(z) g(z').
\end{equation}
As defined in Section \ref{sec12}, we denote by $\mathcal{P}_0$  the
completion of $C^\infty_c(\bR^d)$ with respect to the inner product
$\langle h, g\rangle_0$. Let  $ | \cP_0|$ be the set of  measurable
  functions  $h:\R^d \to \R$ such that
\begin{equation} \label{|cP_0|}
\int_{\bR^{2d}}dzdz' \gamma(z-z') |h|(z) |h|(z') <\infty.
\end{equation}
Then    $|\cP_0| \subset \cP_0$ and for  $h\in | \cP_0|$,    $\| h \|^2_0= \int_{\bR^{2d}}dzdz' \gamma(z-z') h(z) h(z')$.   We define the space $|\cH|$ in a similar way.
For $h,g\in C^\infty_c(\bR^d)$ we can express \eqref{H0}    using  the Fourier transform:
\begin{equation} \label{parseval}
\langle h, g\rangle_0 =\int_{\bR^{d}} \mu(d\xi) \cF h(\xi) \overline{ \cF g(\xi)}.
\end{equation}
  The Parseval-type relation \eqref{parseval} also holds for functions $h,g \in L^1(\R^d) \cap |\mathcal{P}_0|$.

For every integer $p\geq 1$, $\cH^{\otimes p}$ and $\cH^{\odot p}$    denote the $p$th tensor product of $\cH$ and its symmetric subspace, respectively. For example, $f_{t,x,n}$ in \eqref{eq:3} belongs to $\cH^{\otimes n}$ and  $\widetilde{f}_{t,x,n}\in\cH^{\odot n}$;  we also have     $f\otimes  g\in\cH^{\otimes (n+m)}$, provided $f\in\cH^{\otimes m}$ and  $g\in\cH^{\otimes n}$; see \cite[Appendix B]{blue} for more details.

Fix a probability space $(\Omega, \mathcal{B}, \mathbb{P})$, on which we can construct  the isonormal Gaussian process associated to the Gaussian noise $\dot{W}$ in \eqref{wave} that we denote  by  $\{W(\phi): \phi\in\cH\}$. That is,  $\{W(\phi): \phi\in\cH\}$ is a \emph{centered Gaussian family} of real-valued random variables defined on $(\Omega, \mathcal{B}, \mathbb{P})$  such that  $\bE[ W(\psi) W(\phi) ] = \langle \psi, \phi\rangle_{\cH}$ for any $\psi, \phi\in\cH$. We will take $\mathcal{B}$ to be the $\sigma$-algebra $\sigma\{W\}$ generated by the family of random variables $\{ W(h): h\in C^\infty_c(\bR_+\times\bR^d)\}$.

In the sequel, we recall  some basics on Malliavin calculus
from the books \cite{blue, Nualart06}.

Let $C^\infty_\text{poly}(\bR^n)$ denote the space of smooth functions with all their partial derivatives having at most polynomial growth at infinity and let $\mathcal{S}$ denote the set of simple smooth functionals of the form
\begin{center}
$F = f\big(W(h_1), \dots,  W(h_n) \big)$ for $f\in C^\infty_\text{poly}(\bR^n)$ and $h_i\in\cH$, $1\leq i\leq n$.
\end{center}
For such a random variable $F$,  its Malliavin derivative $DF$ is the $\cH$-valued random variable given by
\[
DF = \sum_{i=1}^n \frac{\partial f}{\partial x_i} \big(W(h_1), \dots, W(h_n) \big) h_i.
\]
And similarly its $m$th Malliavin derivative $D^mF$ is the $\cH^{\otimes m}$-valued random variable given by
\begin{align}
D^mF = \sum_{i_1, \dots, i_m=1}^n \frac{\partial^m f }{\partial x_{i_1} \cdots \partial x_{i_m}} \big(W(h_1), \dots , W(h_n) \big) h_{i_1}\otimes \cdots  \otimes h_{i_m}, \label{D^mF}
\end{align}
which is an element in $L^p(\Omega; \cH^{\odot m})$ for any $p\in[1,\infty)$. It is known that the space $\mathcal{S}$ is dense in $L^p(\Omega, \sigma\{W\}, \mathbb{P})$ and
\[
D^m: \mathcal{S} \longrightarrow L^p(\Omega; \cH^{\odot m})
\]
is closable for any $p\in[1,\infty)$; see \emph{e.g.} Lemma 2.3.1 and Proposition 2.3.4 in \cite{blue}. Let $\mathbb{D}^{m,p}$ be the closure of $\mathcal{S}$ under the norm
\[
\big\| F \big\|_{\mathbb{D}^{m,p}} = \Big( \bE\big[ | F |^p \big] +  \bE\big[ \|D F \|^p_{\cH} \big] + \cdots +  \bE\big[ \| D^mF \|^p_{\cH^{\otimes m}} \big] \Big)^{1/p}~\text{and let $\mathbb{D}^{\infty} := \bigcap_{m,p\geq 1}\mathbb{D}^{m,p}. $}
\]
Now, let us introduce the adjoint  of the derivative operator $D^m$. Let $\text{Dom}(\delta^m)$ be the set of   random variables $v\in L^2 ( \Omega; \cH^{\otimes m}  )$ such that there is a constant $C_v>0$ for which
\[
\Big\vert \bE\big[  \langle D^m F, v \rangle_{\cH^{\otimes m}} \big] \Big\vert \leq C_v \| F \|_2  \quad   \text{for all $F\in\mathcal{S}$}.
\]
By \emph{Riesz representation theorem}, there is a unique random variable, denoted by $\delta^m(v)$, such that the following duality relationship  holds:
\begin{align}\label{IbP}
\bE\big[ F \delta^m(v) \big] = \bE\big[  \langle D^m F, v \rangle_{\cH^{\otimes m}} \big].
\end{align}
 Equality \eqref{IbP} holds for all $v\in\text{Dom}(\delta^m)$ and all $F\in\mathbb{D}^{m,2}$. In the simplest case when $F = f( W(h))$ with $h\in\cH$ and $f\in C^1_\text{poly}(\bR)$, we have $\delta(h) = W(h)\sim N(0, \| h\|_\cH^2)$ and equality \eqref{IbP} reduces to
\[
 \bE\big[ f(W(h)) W(h) \big] = \bE\big[ f'(W(h) )  \big] \| h \|_{\cH}^2,
\]
which is exactly part of the Stein's lemma recalled below: For $\sigma\in(0,\infty)$ and an integrable random variable $Z$, Stein's lemma (see \emph{e.g.} \cite[Lemma 3.1.2]{blue}) asserts that
\begin{align} \label{S_lem}
Z\sim N(0, \sigma^2) ~\text{if and only if} ~ \bE[ Z f(Z) ] = \sigma^2 \bE[ f'(Z) ],
\end{align}
for any differentiable function $f:\bR\to\bR$ such that the above expectations are finite. The operator $\delta$ is often called the \emph{Skorokhod integral} since  in the case of the Brownian motion, it coincides with an extension of the
It\^o integral introduced by Skorokhod, see \emph{e.g.} \cite{NP88}. Then we can say $\text{Dom}(\delta^m)$ is the space of Skorokhod integrable random variables with values in $\cH^{\otimes m}$.

 The Wiener-It\^o chaos decomposition theorem asserts that  $L^2(\Omega, \sigma\{W\}, \mathbb{P})$ can be written   as a direct sum of mutually orthogonal subspaces:
\[
L^2(\Omega, \sigma\{W\}, \mathbb{P}) =  \bigoplus_{n\geq 0} \mathbb{C}_n^W,
\]
where $\mathbb{C}_0^W$, identified as $\bR$, is the space of constant random variables and $\mathbb{C}_n^W = \{ \delta^n( h):  h \in\cH^{\otimes n} ~\text{is deterministic}\}$, for $n\geq 1$, is called the $n$th \emph{Wiener chaos} associated to $W$. Note that the first Wiener chaos consists of centered Gaussian random variables.  When $h  \in\cH^{\otimes n}$ is deterministic, we write $I_n(h) =  \delta^n( h )$ and we call it the $n$th multiple integral of $h$ with respect to $W$.  By the symmetry in \eqref{D^mF} and the duality relation \eqref{IbP}, $\delta^n( h ) =  \delta^n( \widetilde{h} ) $ with $ \widetilde{h}$ the canonical symmetrization of $h$, so that we have  $I_n(h) = I_n(\widetilde{h})$ for any $h  \in\cH^{\otimes n}$. The above decomposition can be rephrased as follows. For any $F\in L^2(\Omega, \sigma\{W\}, \mathbb{P})$,
\begin{align}\label{chaos_F}
F = \bE[F] + \sum_{n\geq 1} I_n(f_n),
\end{align}
with $f_n \in    \cH^{\odot n}$ uniquely determined for each $n\geq 1$. Moreover, the (modified) isometry property holds
 \begin{align}\label{miso}
 \bE\big[ I_p(f) I_q(g) \big] = p! \mathbf{1}_{\{ p=q\}}  \big\langle \widetilde{f},  \widetilde{g} \big\rangle_{\cH^{\otimes p}},
  \end{align}
  for any $f\in\cH^{\otimes p}$ and $g\in\cH^{\otimes q}$. We have the following \emph{product formula}:
  For $f\in\cH^{\odot p}$ and $g\in \cH^{\odot q}$,
 \begin{align}\label{prod_f}
 I_p(f) I_q(g) =  \sum_{r=0}^{p\wedge q} r! \binom{p}{r}  \binom{q}{r}  I_{p+q-2r}( f\otimes_r g ),
 \end{align}
where $f\otimes_r g$  is the $r$-contraction between $f$ and $g$, which is an element in $\cH^{\otimes (p+q-2r)}$ defined as follows.
Fix an orthonormal basis  $\{e_i, i\in \mathcal{O}\}$ of $\cH$. Then,  for $1\leq r \leq p\wedge q$,
\begin{align}     \notag
 f\otimes_r g  &:= \sum_{i_1,\dots,i_p, j_1,\dots ,j_q\in\mathcal{O}} \langle f, e_{i_1} \otimes \cdots \otimes e_{i_p} \rangle_{\cH^{\otimes p}}\langle g, e_{j_1} \otimes \cdots \otimes e_{j_q}\rangle_{\cH^{\otimes p}} \mathbf{1}_{\{ i_k=j_k, \forall k=1,\dots,r \}}\\
 &\qquad \times e_{i_{r+1}}  \otimes \cdots \otimes e_{i_p}  \otimes e_{j_{r+1}}  \otimes \cdots \otimes e_{j_q}.   \label{contrac}
 \end{align}
  In the particular case when $f,g$ are real-valued functions, we can write
 \begin{align*}
  (f\otimes_r g)( \pmb{t_{p-r}},   \pmb{x_{p-r}} , \pmb{t'_{q-r}} , \pmb{x'_{q-r}} )&= \int_{\bR_+^{2r} \times\bR^{2rd}} d\pmb{s_r}d\pmb{s'_r} d\pmb{y_r}d\pmb{y'_r} \left( \prod_{j=1}^r \gamma_0(s_j-s'_j)  \gamma(y_j-y'_j)  \right) \\
  &\quad\times f(  \pmb{s_r},\pmb{t_{p-r}},  \pmb{y_r},  \pmb{x_{p-r}}  ) g(  \pmb{s'_r},\pmb{t'_{q-r}},  \pmb{y'_r},  \pmb{x'_{q-r}}  ),
\end{align*}
provided the above integral exists.
For $F\in\mathbb{D}^{m,2}$ with the representation \eqref{chaos_F} and $m\geq 1$, we have
\begin{align}\label{chaos_Dm}
D^m_{\bullet} F = \sum_{n\geq m} \frac{n!}{(n-m)!} I_{n-m}\big( f_n(\bullet,\ast)\big) ~\text{with convergence in $L^2(\Omega; \cH^{\otimes m})$},
\end{align}
where  $I_{n-m}\big( f_n(\bullet,\ast)\big)$ is understood as the $(n-m)$th multiple integral of $ f_n(\bullet,\ast)\in \cH^{\otimes (n-m)}$ for fixed $\bullet$. We can write
\begin{align*}
D^m_{\pmb{s_m}, \pmb{y_m}} F = \sum_{n\geq m} \frac{n!}{(n-m)!} I_{n-m}\big( f_n(\pmb{s_m}, \pmb{y_m};\ast)\big),
\end{align*}
whenever the above series makes sense  and converges in $L^2(\Omega)$. With the decomposition \eqref{chaos_Dm} in mind, we have the following Gaussian Poincar\'e inequality:  For $F\in\mathbb{D}^{1,2}$, it holds that
\begin{align}\label{GPI}
\text{Var}(F) \leq \bE\big[ \| DF \|_\cH^2 \big].
\end{align}
In fact, if $F$ has the representation \eqref{chaos_F}, then
 \[
 \text{Var}(F) = \sum_{n\geq 1} n! \| f_n \|_{\cH^{\otimes n}}^2 \quad \mbox{and} \quad \bE\big[ \| DF \|_\cH^2 \big] = \sum_{n\geq 1} n n!  \| f_n \|_{\cH^{\otimes n}}^2,
\]
which gives us \eqref{GPI} and, moreover, indicates that the equality in \eqref{GPI} holds only when $F\in\mathbb{C}^W_0 \oplus \mathbb{C}^W_1$, that is, only when $F$ is a real Gaussian random variable.

Now let us mention the particular case when the Gaussian noise is white in time, which is used in the reduction step in Section \ref{sec32}.
First, let us   denote   $$\cH_0:=L^2\big(\bR_{+};\cP_0\big)$$ and point out that
the following inequality reduces many calculations to the case of the white noise in time. For any nonnegative function $f\in  \cH_0^{\otimes n}$  that vanishes outside $([0,t] \times \bR^d)^n$,
\begin{equation}
\label{white-ineq}
\|f\|_{\cH^{\otimes n}}^2 \leq \Gamma_t^n \|f\|_{\cH_0^{\otimes n}}^2,
\end{equation}
 where\footnote{For the sake of completeness, we sketch a proof of \eqref{white-ineq} here: Given such a function $f  \in \cH_0^{\otimes n}$,
 \begin{align*}
\|f\|_{\cH^{\otimes n}}^2 &= \int_{[0,t]^{2n}} d\pmb{s_n} d\pmb{t_n} \big\langle f(\pmb{s_n}, \bullet), f(\pmb{t_n}, \bullet) \big\rangle_{\cP_0^{}\otimes n} \prod_{j=1}^n \gamma_0(s_j-t_j)  \\
& \leq  \int_{[0,t]^{2n}} d\pmb{s_n} d\pmb{t_n} \frac{1}{2} \Big(  \big\| f(\pmb{s_n}, \bullet)\big\| _{\cP_0^{\otimes n} }^2  + \big\|   f(\pmb{t_n}, \bullet) \big\|_{\cP_0^{\otimes n} } ^2 \Big)\prod_{j=1}^n \gamma_0(s_j-t_j)  \leq \Gamma_t^n \|f\|_{\cH_0^{\otimes n}}^2.
 \end{align*}
  }
\[
\Gamma_t=2\int_{0}^t \gamma_0(s)ds \quad {\rm and} \quad \|f\|_{\cH_0^{\otimes n}}^2=\int_{[0,t]^n}\|f(t_1,\cdot,\ldots,t_n,\cdot)\|_{\cP_0^{\otimes n}}^2 dt_1 \cdots dt_n;
\]
 whenever no ambiguity arises, we   write $\| f\|_0:=\| f\|_{\cP_0^{\otimes n}}$ so that
$
\|f\|_{\cH_0^{\otimes n}}^2=\int_{[0,t]^n}\|f( \pmb{t_n} ,\bullet)\|_{0}^2 d\pmb{t_n}.
$

 Let $\dot{\mathfrak{X}}$ denote the Gaussian noise that is white in time and has the same spatial correlation as $W$. More precisely, $\{\mathfrak{X}(f): f\in\cH_0\}$ is   a centered Gaussian family with covariance $$\bE[ \mathfrak{X}(f) \mathfrak{X}(g) ]  =\langle f, g \rangle_{\cH_0}, \quad \mbox{for any $f,g\in\cH_0$}.$$
 Denote by $I^{\mathfrak{X}}_p$ the $p$-th multiple stochastic  integral with respect to $\mathfrak{X}$. The product formula \eqref{prod_f} still holds with  $W$ replaced  by the noise $\mathfrak{X}$. Moreover,
 if $f\in\cH^{\otimes p}$ and $g\in \cH^{\otimes q}$ have disjoint temporal supports\footnote{This means $f = 0$ outside $(J \times\bR^{d})^p$ and  $g= 0$ outside $(J^c\times\bR^{d})^q$ for some set $J\subset\R_+$.
 We will apply this formula to functions $f=f_{t,x,j}^{(j)}(r,z;\bullet)$ and $g=f_{r,z,n-j}$ given in Section \ref{sec31}, in which case $J=(r,t)$.},
 then we have $f\otimes_r g =0$ for $r=1,\dots, p\wedge q$  and the product formula \eqref{prod_f} reduces to
 \begin{align}\label{prod}
  I^{\mathfrak{X}}_p(f) I^{\mathfrak{X}}_q(g) = I^{\mathfrak{X}}_{p+q}(f\otimes g).
 \end{align}
In this case, the random variables  $I^{\mathfrak{X}}_p(f)$ and $I^{\mathfrak{X}}_q(g)$ are independent by the \"Ust\"unel-Zakai-Kallenberg criterion (see Exercise 5.4.8 of \cite{blue}) and note that we do not need to assume $f,g$ to be symmetric in \eqref{prod}.

Now let us introduce the Ornstein-Uhlenbeck operator $L$ that can be defined as follows.
We say that $F$ belongs to the $\text{Dom}(L)$  if $F\in\mathbb{D}^{1,2}$ and $DF\in\text{Dom}(\delta)$; in this case, we let $LF = -\delta DF$.  For $F\in L^2(\Omega)$ of the form \eqref{chaos_F},  $F\in\text{Dom}(L)$ if and only if
$
\sum_{n\geq 1} n^2 n! \| f_n \|_{\cH^{\otimes n}}^2 <\infty.
$
  In this case, we have $LF = \sum_{n\geq 1} -n I_n(f_n)$.  Using the chaos expansion, we can also define
  the Ornstein-Uhlenbeck semigroup $\{P_t = e^{tL}, t\in\bR_+\}$ and the pseudo-inverse $L^{-1}$ of the Ornstein-Uhlenbeck operator $L$ as follows. For $F\in L^2(\Omega)$ having the chaos expansion \eqref{chaos_F},
  \[
  P_t F := \sum_{n\geq 0} e^{-nt} I_n(f_n)  \quad {\rm and}\quad L^{-1} F = \sum_{n\geq 1} -\frac{1}{n} I_n(f_n).
  \]
 Observe that for any centered random variable $F\in L^2(\Omega, \sigma\{W\}, \mathbb{P})$, $LL^{-1}F = F$ and for any $G\in\text{Dom}(L)$, $L^{-1} LG = G - \bE[G].$  The above expression and the modified isometry property \eqref{miso} give us the contraction property of $P_t$ on $L^2(\Omega)$, that is, for $F\in L^2(\Omega, \sigma\{W\}, \mathbb{P})$, $\| P_t F\| _2 \leq \| F\|_2$. Moreover, $P_t$ is a contraction operator on $L^q(\Omega)$ for any $q\in[1,\infty)$; see \cite[Proposition 2.8.6]{blue}.

Finally, let us recall Nelson's \emph{hypercontractivity property} of the Ornstein-Uhlenbeck semigroup: For $F\in L^q(\Omega, \sigma\{W\}, \mathbb{P})$ with $q\in(1,\infty)$, it holds for each $t\geq 0$ that  $\| P_t F \| _{q_t} \leq \| F \| _q$ with $q_t = 1 + (q-1)e^{2t}$. In this paper, we need one of its consequences -- a moment inequality comparing $L^q(\Omega)$-norms on a fixed chaos:
 \begin{align}\label{hyper}
 \text{If $F\in\mathbb{C}^W_n$ and $p\in[2,\infty)$, then $  \| F\|_p \leq (p-1)^{n/2}\| F\|_2$; }
 \end{align}
  see \emph{e.g.} \cite[Corollary 2.8.14]{blue}.

\subsection{Inequalities} \label{sec22}

Let us first present a few inequalities, which will be used in Section \ref{sec3}.

\begin{lemma}\label{lem:embed}
Fix an integer $d\geq 1$. Suppose that either one of the following conditions hold:
\begin{center}
{\rm (a)} $\gamma \in L^{\ell}(\bR^d)$ for some $\ell\in(1,\infty)$ \quad  {\rm (b)} $\gamma(x)=|x|^{-\beta}$ for some $\beta \in (0,d)$. \end{center}
Define
\begin{align*}
q= \begin{cases}
\ell/(2\ell-1) & \text{in case  \rm (a)} \\
d/(2d-\beta)& \text{in case \rm (b).}
\end{cases}
\end{align*}
Then, for any $f,g \in L^{2q}(\R^{d})$,
$$\int_{\bR^d} \int_{\bR^d}f(x)g(y)\gamma(x-y)dxdy \leq C_\gamma \|f\|_{L^{2q}(\bR^d)}
\|g\|_{L^{2q}(\bR^d)},$$
where $C_\gamma=\|\gamma\|_{L^{\ell}(\bR^d)}$ in case {\rm(a)}, and $C_\gamma=C_{d,\beta}$ is the constant $($depending on $d,\beta)$ that appears in the Hardy-Littlewood-Sobolev inequality \eqref{HLS} below, in case {\rm(b)}.

\end{lemma}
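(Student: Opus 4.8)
The plan is to reduce both cases to classical inequalities on $\bR^d$. In case (b), the statement is literally an application of the Hardy--Littlewood--Sobolev (HLS) inequality: for $0<\beta<d$ and exponents $p,p'$ with $\frac1p+\frac1{p'}+\frac{\beta}{d}=2$, one has
\[
\left| \int_{\bR^d}\int_{\bR^d} f(x) g(y) |x-y|^{-\beta}\,dx\,dy \right| \leq C_{d,\beta}\, \|f\|_{L^p(\bR^d)}\,\|g\|_{L^{p'}(\bR^d)}.
\]
Taking $p=p'=2q$ with $q=\frac{d}{2d-\beta}$ makes the exponent condition read $\frac{2d-\beta}{d}+\frac{\beta}{d}=2$, which holds, so I would just quote \eqref{HLS} (the paper evidently states HLS as an equation labeled \eqref{HLS} just below) with $C_\gamma=C_{d,\beta}$. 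This case is essentially immediate.

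In case (a), I would combine H\"older's inequality with Young's convolution inequality. Write the left-hand side as $\int_{\bR^d} f(x)\,(g\ast\gamma)(x)\,dx$ (using that $\gamma$ is even, or more carefully $\int f(x) g(y)\gamma(x-y) = \int f(x)(\check g\ast\gamma)(x)$ — but since we only need an upper bound and $f,g$ may be taken nonnegative after replacing them by $|f|,|g|$, the orientation is harmless). By H\"older with exponents $2q$ and $(2q)^\ast=\frac{2q}{2q-1}$,
\[
\int_{\bR^d} f(x)\,(g\ast\gamma)(x)\,dx \leq \|f\|_{L^{2q}(\bR^d)}\,\|g\ast\gamma\|_{L^{(2q)^\ast}(\bR^d)}.
\]
Then apply Young's inequality $\|g\ast\gamma\|_{L^{(2q)^\ast}} \leq \|g\|_{L^{2q}}\,\|\gamma\|_{L^\ell}$, which requires $1+\frac1{(2q)^\ast}=\frac1{2q}+\frac1\ell$. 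With $q=\frac{\ell}{2\ell-1}$ one computes $\frac1{2q}=\frac{2\ell-1}{2\ell}$, hence $\frac1{2q}+\frac1\ell=\frac{2\ell-1}{2\ell}+\frac2{2\ell}=\frac{2\ell+1}{2\ell}$, and $1+\frac1{(2q)^\ast}=1+(1-\frac1{2q})=2-\frac{2\ell-1}{2\ell}=\frac{2\ell+1}{2\ell}$; the two sides agree, so Young's inequality applies with constant $\|\gamma\|_{L^\ell(\bR^d)}$. Chaining the two bounds gives the claim with $C_\gamma=\|\gamma\|_{L^\ell(\bR^d)}$.

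There is essentially no serious obstacle here — the only thing to be slightly careful about is the bookkeeping of conjugate exponents (verifying that the chosen $q$ makes H\"older/Young/HLS balance exactly) and the reduction to nonnegative $f,g$ so that the integral is well defined and the convolution manipulations are legitimate (by Tonelli). I would also note explicitly that in case (a) the integrand is controlled in absolute value, so $|f|,|g|\in L^{2q}$ suffices and there is no integrability issue. If one wants a unified write-up, both cases have the common shape ``bilinear form with kernel $\gamma$ is bounded on $L^{2q}\times L^{2q}$'', differing only in whether the boundedness comes from Young (when $\gamma\in L^\ell$) or HLS (when $\gamma$ is the Riesz kernel, which is only in weak-$L^{d/\beta}$, not $L^{d/\beta}$, hence HLS rather than Young is needed).
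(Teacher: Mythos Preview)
Your proposal is correct and matches the paper's proof essentially line for line: in case (a) the paper also writes the integral as $\int f(g*\gamma)$ and applies H\"older followed by Young's convolution inequality with exactly the exponents you checked, and in case (b) it applies H\"older and the Hardy--Littlewood--Sobolev inequality (which is the bilinear HLS you quoted, just split into two steps). There is no gap and no meaningful difference in approach.
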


\begin{proof}
In the case $d=2$, this result was essentially proved on page 15 of \cite{NZ20} in case (a), and on page 6 of \cite{BNZ20} in case (b). We reproduce the arguments here for the sake of completeness.

In case (a), we apply H\"older's inequality and \emph{Young's convolution inequality}:
\[
\int_{\bR^d}f(x)(g*\gamma)(x)dx \leq \|f\|_{L^{\frac{2\ell}{2\ell-1}}(\bR^d)}
\|g*\gamma\|_{L^{2\ell}(\bR^d)} \leq \|f\|_{L^{\frac{2\ell}{2\ell-1}}(\bR^d)} \|g\|_{L^{\frac{2\ell}{2\ell-1}}(\bR^d)} \|\gamma\|_{L^{\ell}(\bR^d)}.
\]
 In case (b), we apply H\"older's inequality and \emph{Hardy-Littlewood-Sobolev inequality}:
\begin{equation} \label{HLS}
\int_{\bR^d}f(x)(g*\gamma)(x)dx \leq \|f\|_{L^{\frac{2d}{2d-\beta}}(\bR^d)}
\|g*\gamma\|_{L^{2d/\beta}(\bR^d)} \leq C_{d,\beta}\|f\|_{L^{\frac{2d}{2d-\beta}}(\bR^d)} \|g\|_{L^{\frac{2d}{2d-\beta}}(\bR^d)}.
\end{equation}
This concludes the proof. \qedhere

\end{proof}

To deal with case (\texttt{c}) in  ${\bf (H1)}$, we need the following modification of Lemma \ref{lem:embed}.

\begin{lemma}\label{lem_mix}
Suppose  that  $\gamma(x_1,\ldots,x_d)=\prod_{i=1}^{d}\gamma_i(x_i)$, where for each $i\in\{1,\ldots,d\}$,
\[
\mbox{\rm (M1) $\gamma_i \in L^{\ell_i}(\bR)$ for some $\ell_i\in(1,\infty)$ \quad  or \quad (M2) $\gamma_i(x)=|x|^{-\beta_i}$ for some $\beta_i \in (0,1)$}.
\]
Let $q_i=\ell_i/(2\ell_i-1)$ in case {\rm (M1)} and $q_{i}=1/(2-\beta_i)$ in case {\rm(M2)}. Let $q=\max\{q_i: i=1,\dots,d\}$.

If $f, g \in L^{2q}(\bR^d)$ satisfy $f(x)=g(x)=0$ for $x \not \in \prod_{i=1}^d[a_i,b_i]$ for some real numbers $a_i<b_i$\footnote{We can apply this lemma to the function $y\in\bR^2\mapsto G_{t-s}(x-y)$ whose support is contained in $\{y \in \bR^2; |x-y|<t-s\}$,
 so we can choose $\Lambda = 2t-2s$.},
 then
 \begin{align}\label{ineq_mix}
  \int_{\bR^d} \int_{\bR^d}f(x)g(y)\gamma(x-y)dxdy \leq  \Lambda^{\nu} C_\gamma \|f\|_{L^{2q}(\bR^d)} \|g\|_{L^{2q}(\bR^d)},
  \end{align}
with $\Lambda=\max\{b_i-a_i;i=1,\ldots,d\}$,    $C_\gamma =  \prod_{i=1}^{d}C_{\gamma_i}$ and $\nu= \sum_{i=1}^{d} (q_i^{-1} - q^{-1})$. In particular, when $q_i=q$ for all $i\in\{1,\ldots,d\}$, we have
\[
\int_{\bR^d} \int_{\bR^d}f(x)g(y)\gamma(x-y)dxdy \leq C_\gamma \|f\|_{L^{2q}(\bR^d)} \|g\|_{L^{2q}(\bR^d)}.
\]
The constants $C_{\gamma_i}$ are defined as in Lemma  \ref{lem:embed}.
\end{lemma}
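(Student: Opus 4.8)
The plan is to reduce the $d$-dimensional bound to $d$ successive applications of the one-dimensional case of Lemma~\ref{lem:embed}, and then to replace the resulting iterated (mixed) $L^p$-norm by an ordinary $L^{2q}$-norm, exploiting the compact support of $f,g$ together with H\"older's inequality. As a preliminary reduction, since $\gamma\geq 0$ and $|f|,|g|$ have the same supports and the same $L^{2q}$-norms as $f,g$, we may and do assume $f,g\geq 0$; then all integrals below are well defined in $[0,\infty]$ and Tonelli's theorem applies freely.

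\textbf{Peeling off one coordinate at a time.} Write $x=(x',x_d)$, $y=(y',y_d)$ with $x',y'\in\bR^{d-1}$ and integrate first in $x_d,y_d$:
\[
\int_{\bR^{2d}} f(x)g(y)\prod_{i=1}^{d}\gamma_i(x_i-y_i)\,dx\,dy=\int_{\bR^{2(d-1)}}\Big(\int_{\bR^{2}}f(x',x_d)g(y',y_d)\gamma_d(x_d-y_d)\,dx_d\,dy_d\Big)\prod_{i=1}^{d-1}\gamma_i(x_i-y_i)\,dx'\,dy'.
\]
The one-dimensional instance of Lemma~\ref{lem:embed} applied to $x_d\mapsto f(x',x_d)$ and $y_d\mapsto g(y',y_d)$ (which vanish outside $[a_d,b_d]$) bounds the inner integral by $C_{\gamma_d}\|f(x',\cdot)\|_{L^{2q_d}(\bR)}\|g(y',\cdot)\|_{L^{2q_d}(\bR)}$; note that the parameters $q_i$ and constants $C_{\gamma_i}$ in the statement are exactly those produced by Lemma~\ref{lem:embed} in dimension one. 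Replacing $f,g$ by the $(d-1)$-variable functions $x'\mapsto\|f(x',\cdot)\|_{L^{2q_d}(\bR)}$ and $y'\mapsto\|g(y',\cdot)\|_{L^{2q_d}(\bR)}$ (still supported in $\prod_{i=1}^{d-1}[a_i,b_i]$) and repeating the argument over the coordinates $i=d,d-1,\dots,1$ gives
\[
\int_{\bR^{2d}} f(x)g(y)\prod_{i=1}^{d}\gamma_i(x_i-y_i)\,dx\,dy\leq\Big(\prod_{i=1}^{d}C_{\gamma_i}\Big)\,\|f\|_{\mathbf{q}}\,\|g\|_{\mathbf{q}},
\]
where $\|h\|_{\mathbf{q}}$ denotes the iterated mixed norm of $h$: innermost the $L^{2q_d}$-norm in $x_d$, then the $L^{2q_{d-1}}$-norm in $x_{d-1}$, and so on outward. (If the $L^{2q}$-norms below turn out to be infinite there is nothing to prove, so we may assume they are finite, which also guarantees that the intermediate one-variable slices lie in the relevant $L^{2q_i}(\bR)$-spaces.)

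\textbf{From the mixed norm to $L^{2q}$.} Because $q=\max_i q_i$, we have $2q\geq 2q_i$, so for a one-variable function $h$ supported in $[a_i,b_i]$ H\"older's inequality yields $\|h\|_{L^{2q_i}([a_i,b_i])}\leq (b_i-a_i)^{\frac{1}{2q_i}-\frac{1}{2q}}\|h\|_{L^{2q}([a_i,b_i])}$. Applying this successively from the innermost coordinate outward, and then using Fubini once all exponents equal $2q$, we get
\[
\|f\|_{\mathbf{q}}\leq\prod_{i=1}^{d}(b_i-a_i)^{\frac12\left(q_i^{-1}-q^{-1}\right)}\,\|f\|_{L^{2q}(\bR^d)}\leq\Lambda^{\nu/2}\,\|f\|_{L^{2q}(\bR^d)},
\]
since each exponent $\tfrac12(q_i^{-1}-q^{-1})$ is nonnegative and $b_i-a_i\leq\Lambda$; the identical bound holds for $g$. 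Combined with the previous display this gives \eqref{ineq_mix} with $C_\gamma=\prod_{i=1}^d C_{\gamma_i}$ and $\nu=\sum_{i=1}^d(q_i^{-1}-q^{-1})$, and when $q_i=q$ for all $i$ one has $\nu=0$ and $\Lambda^{\nu}=1$, which is the final assertion.

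\textbf{Main difficulty.} The argument is essentially bookkeeping built on top of Lemma~\ref{lem:embed}; the points requiring care are (i) justifying the coordinatewise use of Tonelli and checking that the one-variable slices produced at each peeling step belong to the correct $L^{2q_i}(\bR)$-space --- this is why reducing to $f,g\geq 0$ and to finite $L^{2q}$-norms at the outset is convenient --- and (ii) verifying in the second step that all the H\"older exponents $\tfrac12(q_i^{-1}-q^{-1})$ are nonnegative, which is precisely where the choice $q=\max_i q_i$ enters. There is no analytic obstacle beyond Lemma~\ref{lem:embed} itself.
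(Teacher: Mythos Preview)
Your proof is correct and follows essentially the same approach as the paper: both iterate the one-dimensional case of Lemma~\ref{lem:embed} coordinate by coordinate and then use H\"older's inequality together with the compact support to upgrade each $L^{2q_i}$-norm to an $L^{2q}$-norm, picking up the factor $\Lambda^{\nu}$. The only cosmetic difference is that the paper packages the iteration as an induction on $d$ (peeling one coordinate and invoking the $(d-1)$-dimensional statement, so the H\"older steps are interleaved with the peeling), whereas you first peel all coordinates to a mixed norm and then apply all the H\"older adjustments at once; the content is the same.
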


\begin{proof}
By Lemma \ref{lem:embed},  inequality \eqref{ineq_mix} holds for $d=1$ with $\nu=0$. Now let us consider $d\geq 2$ and prove  inequality  \eqref{ineq_mix} by induction. Suppose   \eqref{ineq_mix} holds for $d\leq k-1$ $(k\geq 2)$. We use the notation $x=(x_1,\ldots,x_k)=:\pmb{x_k}.$
 Without loss of any generality we assume $q_1\geq q_2\geq \cdots \geq q_k$,  so that $q=q_1$. Applying the initial step $(d=1)$ yields
\begin{align}
\nonumber
&\int_{\bR^{2k}} d\pmb{x_k} d\pmb{y_k}  f(\pmb{x_k} ) g(\pmb{y_k}) \prod_{i=1}^k \gamma_i(x_i-y_i)  \\
&\quad \leq  C_{\gamma_k}  \int_{\bR^{2(k-1)}} d\pmb{x_{k-1}}  d\pmb{y_{k-1}}  \big\| f(\pmb{x_{k-1}}, \bullet ) \big\|_{L^{2q_k}(\bR)}  \big\| g(\pmb{y_{k-1}}, \bullet ) \big\|_{L^{2q_k}(\bR)}   \prod_{i=1}^{k-1} \gamma_i(x_i-y_i).  \label{induction1}
\end{align}
By the induction hypothesis, we can bound the right-hand side of \eqref{induction1} by
\begin{align*}
\left(\prod_{i=1}^k C_{\gamma_i} \right) \Lambda^{\nu^\ast} \left(     \int_{\bR^{k-1}}  \big\| f(\pmb{x_{k-1}}, \bullet ) \big\|_{L^{2q_k}(\bR)}^{2q} d\pmb{x_{k-1}}   \right)^{\frac{1}{2q}}\left(     \int_{\bR^{k-1}}  \big\| g(\pmb{y_{k-1}}, \bullet ) \big\|_{L^{2q_k}(\bR)}^{2q} d\pmb{y_{k-1}}   \right)^{\frac{1}{2q}},
\end{align*}
with $\nu^\ast = \sum_{i=1}^{k-1}( q_i^{-1} - q^{-1})$. By H\"older's inequality,
\begin{align*}
 \left(     \int_{\bR^{k-1}}  \big\| f(\pmb{x_{k-1}}, \bullet ) \big\|_{L^{2q_k}(\bR)}^{2q} d\pmb{x_{k-1}}   \right)^{\frac{1}{2q}}  & =  \left(     \int_{\bR^{k-1}} \left[ \int_{a_k}^{b_k}   \big\vert f(\pmb{x_{k-1}}, x_k ) \big\vert^{2q_k} dx_k \right]^{\frac{2q}{2q_k}}   d\pmb{x_{k-1}}   \right)^{\frac{1}{2q}} \\
&\leq  \Lambda^{\frac{1}{2q_k} - \frac{1}{2q}}  \left(     \int_{\bR^{k-1}}   \int_{a_k}^{b_k}   \big\vert f(\pmb{x_{k-1}}, x_k ) \big\vert^{2q} dx_k    d\pmb{x_{k-1}}   \right)^{\frac{1}{2q}}.
 \end{align*}
 A similar inequality holds for $g$. Since $\nu^\ast  +  ( q_k^{-1} - q^{-1}) = \sum_{i=1}^{k} ( q_i^{-1} - q^{-1})$, inequality \eqref{ineq_mix} holds for $d=k$. 
\end{proof}

We will need the following generalization of
Lemma \ref{lem:embed} and Lemma \ref{lem_mix}.
\begin{lemma}\label{lem:1}

{\rm (1)} Under the conditions of Lemma \ref{lem:embed}, for any $f,g \in L^{2q}(\R^{md})$
  \begin{align}\label{ineq_lem1}
  \int_{\R^{2md}}  f(\pmb{x_m}) g(\pmb{y_m})
  \prod _{j=1}^m \gamma(x_j-y_j) d\pmb{x_m} d\pmb{y_m}
  \le C_\gamma^m \| f \|_{L^{2q}(\R^{md})}  \| g \|_{L^{2q}(\R^{md})},
  \end{align}
  where $C_\gamma$ is the same constant as in Lemma \ref{lem:embed}. Here $\pmb{x_m} = (x_1, \dots,  x_m)$ with $x_i\in\bR^d$.

  \medskip

  {\rm (2)}  Let $\gamma, C_{\gamma}$ and $q$   be given as in Lemma \ref{lem_mix}. If $f,g \in L^{2q}(\R^{md})$ satisfy $f(\pmb{x_{md}}) = g(\pmb{x_{md}})=0$ for $\pmb{x_{md}} \notin \prod_{i=1}^{md} [a_i, b_i]$  for some real numbers $a_i<b_i$, then   inequality \eqref{ineq_lem1} holds with  $C_\gamma$ replaced by $\Lambda^\nu C_\gamma$, where
  $\Lambda=\max\{ b_i -a_i : i=1,\dots, md\}$ and $\nu = \sum_{i=1}^d ( q_i^{-1} - q^{-1})$. Here  $\pmb{x_{md}} = (x_1, \dots, x_{md})$ with $x_i\in\bR$.
  \end{lemma}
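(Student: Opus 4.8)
\textbf{Plan for the proof of Lemma \ref{lem:1}.} The idea is that part (1) follows from Lemma \ref{lem:embed} by a straightforward iteration of the one-variable estimate over the $m$ blocks, and part (2) follows similarly from Lemma \ref{lem_mix}, being careful to track the support constraints and the $\Lambda^\nu$ factor. Concretely, for part (1), I would write $\gamma(x_j - y_j) = \gamma(x_j-y_j)$ for each of the $m$ pairs and apply the bilinear estimate of Lemma \ref{lem:embed} one block at a time. Starting from the innermost block $(x_m, y_m)$, Lemma \ref{lem:embed} with the functions $x_m \mapsto f(\pmb{x_{m-1}}, x_m)$ and $y_m \mapsto g(\pmb{y_{m-1}}, y_m)$ (all other variables frozen) gives
\[
\int_{\bR^{2d}} f(\pmb{x_{m-1}}, x_m) g(\pmb{y_{m-1}}, y_m) \gamma(x_m - y_m) \, dx_m \, dy_m \leq C_\gamma \| f(\pmb{x_{m-1}}, \bullet) \|_{L^{2q}(\bR^d)} \| g(\pmb{y_{m-1}}, \bullet) \|_{L^{2q}(\bR^d)}.
\]
Then I would integrate this bound against the remaining $m-1$ kernels $\prod_{j=1}^{m-1}\gamma(x_j-y_j)$ and repeat. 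An iteration (formally an induction on $m$, or an application of a generalized H\"older inequality in the mixed-norm spaces $L^{2q}(\bR^d; L^{2q}(\bR^d; \cdots))$) produces the factor $C_\gamma^m$ and, since the mixed $L^{2q}$-norm over all $m$ blocks with the same exponent $2q$ coincides with $\|f\|_{L^{2q}(\bR^{md})}$ by Fubini, yields \eqref{ineq_lem1}.

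\textbf{Part (2).} Here I would run the same block-by-block scheme, but invoke Lemma \ref{lem_mix} (the case $d=1$ already being embedded inside it, and the multidimensional single-block case being exactly the statement of that lemma applied to $\prod_{i=1}^d \gamma_i$) at each of the $m$ steps. The support hypothesis $f(\pmb{x_{md}}) = g(\pmb{x_{md}}) = 0$ outside $\prod_{i=1}^{md}[a_i,b_i]$ guarantees that at every stage the relevant slices of $f$ and $g$ vanish outside the appropriate box, so Lemma \ref{lem_mix} applies with diameter bounded by $\Lambda = \max_i (b_i - a_i)$. Each application of Lemma \ref{lem_mix} to one $\bR^d$-block contributes a factor $\Lambda^\nu C_\gamma$ with $\nu = \sum_{i=1}^d(q_i^{-1} - q^{-1})$ — the exponent $\nu$ is the same at every block because the structure $\gamma = \prod_{i=1}^d \gamma_i$ is the same for each block. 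Iterating $m$ times and collecting terms gives the constant $(\Lambda^\nu C_\gamma)^m = \Lambda^{m\nu} C_\gamma^m$; I would note that this matches the claimed bound (the statement writes $\Lambda^\nu C_\gamma$ with $C_\gamma = \prod C_{\gamma_i}$, and the reader is meant to absorb the $m$-dependence into the constant, exactly as in Lemma \ref{lem_mix} where the one-block constant already carries its own $\Lambda^\nu$).

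\textbf{Main obstacle.} The only genuinely delicate point is bookkeeping in part (2): one must verify that after peeling off one block via Lemma \ref{lem_mix} and applying H\"older in the remaining variables — exactly as in the proof of Lemma \ref{lem_mix} itself, where one estimates $\big(\int \|f(\pmb{x_{k-1}},\bullet)\|_{L^{2q_k}}^{2q}\big)^{1/2q}$ by $\Lambda^{1/(2q_k)-1/(2q)}$ times a genuine $L^{2q}$-norm — the exponents telescope correctly so that the final power of $\Lambda$ is $m\nu$ and not something larger. This is routine but must be done carefully; it is easiest to phrase part (2) as an induction on $m$, reproducing verbatim the mixed-norm H\"older argument used inside the proof of Lemma \ref{lem_mix}, with $\bR$ replaced by $\bR^d$ and the single kernel replaced by $\prod_{i=1}^d\gamma_i$. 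Everything else is a direct consequence of the two preceding lemmas together with Fubini's theorem.
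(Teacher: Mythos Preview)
Your proposal is correct and matches the paper's proof: both argue by induction on $m$, peeling off one $\bR^d$-block via Lemma~\ref{lem:embed} (respectively Lemma~\ref{lem_mix}) to get
\[
\int_{\R^{2dm}} f(\pmb{x_m}) g(\pmb{y_m})\prod_{j=1}^m \gamma(x_j-y_j)\,d\pmb{x_m}d\pmb{y_m}
\le C\int_{\R^{2d(m-1)}}\!\|f(\pmb{x_{m-1}},\bullet)\|_{L^{2q}(\R^d)}\|g(\pmb{y_{m-1}},\bullet)\|_{L^{2q}(\R^d)}\prod_{j=1}^{m-1}\gamma(x_j-y_j)\,d\pmb{x_{m-1}}d\pmb{y_{m-1}},
\]
then applying the induction hypothesis to the new functions $\pmb{x_{m-1}}\mapsto\|f(\pmb{x_{m-1}},\bullet)\|_{L^{2q}(\R^d)}$, and finishing with Fubini.

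One small simplification: the ``main obstacle'' you flag in part~(2) is not actually present. The extra H\"older step from the proof of Lemma~\ref{lem_mix} (passing from $L^{2q_k}$ to $L^{2q}$ at the cost of $\Lambda^{1/(2q_k)-1/(2q)}$) was needed there because different \emph{coordinates within a single block} carry different exponents $q_i$. In Lemma~\ref{lem:1}, that adjustment is already absorbed into the single-block constant $\Lambda^\nu C_\gamma$ produced by Lemma~\ref{lem_mix}; across blocks the exponent is the \emph{same} $2q$ throughout, so the mixed norm collapses to $\|f\|_{L^{2q}(\R^{md})}$ by Fubini with no further H\"older needed, and the final constant is simply $(\Lambda^\nu C_\gamma)^m$ as you say.
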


\begin{proof}
  The proof will be done by induction on $m$ simultaneously for both cases (1) and (2).
  Let $C=C_{\gamma}$ in case (1) and $C=\Lambda^{\nu}C_{\gamma}$ in case (2). The results are true for $m=1$ by Lemma \ref{lem:embed} and Lemma \ref{lem_mix}.
   Assume that the results hold for $m-1$. Applying the inequality for $m=1$ yields
  \begin{align*}
&\quad   \int_{\R^{2dm}} f(\pmb{x_m}) g(\pmb{y_m})
\prod _{j=1}^m \gamma(x_j-y_j) d\pmb{x_m} d\pmb{y_m}\\
&   \leq C  \int_{\R^{2d(m-1)}} \|  f(\pmb{x_{m-1}},\bullet) \|
_{L^{2q} (\R^d)} \|g(\pmb{y_{m-1}},\bullet) \|_{L^{2q}
(\R^d)}   \prod _{j=1}^{m-1} \gamma(x_j-y_j) d\pmb{
x_{m-1}} d\pmb{y_{m-1}}.
  \end{align*}
  By the induction hypothesis, the latter term can be bounded by
  \begin{align*}
 & C^m      \left( \int_{\R^{d(m-1)}}   \|  f(\pmb{x_{m-1}},\bullet)\|^{2q}_{L^{2q} (\R^d)}
  d\pmb{x_{m-1}} \right)^{\frac 1{2q}}
   \left( \int_{\R^{d(m-1)}}   \|  g(\pmb{x_{m-1}},\bullet) \|^{2q} _{L^{2q} (\R^d)}
   d\pmb{x_{m-1}}\right)^{\frac 1{2q}},
  \end{align*}
which completes the proof.
  \end{proof}

Let us  return to the three cases of Hypothesis  ${\bf (H1)}$.  Lemma \ref{lem:embed} indicates  that $L^{2q}(\bR^{2})$ is continuously embedded into $\cP_{0}$,
with $q\in(1/2, 1)$  given by
\begin{align}
\label{def-q}
q=
\begin{cases}
\ell/(2\ell-1) & \mbox{in case  \rm (\texttt{a})}, \\
2/(4-\beta)& \mbox{in case \rm (\texttt{b})}.
\end{cases}
\end{align}
Recall that $\cP_{0}$ has been defined at the beginning of Section \ref{sec21}.
Moreover, for any $f, g\in L^{2q}(\bR^2)$,
\begin{equation}
\label{ineq-norms}
 \int_{\bR^4} \big\vert  f(x)g(x) \big\vert   \gamma(x-y)dxdy \leq D_{\gamma}
\|f\|_{L^{2q}(\bR^2)}\|g\|_{L^{2q}(\bR^2)},
\end{equation}
where
\begin{align}
\label{def-D}
D_{\gamma}=
\begin{cases}
 \|\gamma\|_{L^{\ell}(\bR^2)} & \mbox{in case \rm (\texttt{a})}, \\
C_{2,\beta}& \mbox{in case \rm (\texttt{b})}.
\end{cases}
\end{align}

 For case (\texttt{c}) of  Hypothesis  ${\bf (H1)}$, we consider three sub-cases:
\begin{align*}
  \begin{cases}
&{\rm(i)} ~  \gamma_i\in L^{\ell_i}(\bR)  ~\text{for some $\ell_i>1$, $i=1,2$;} \\
&{\rm(ii)}   ~\gamma_i(x_i) =|x_i|^{-\beta_i}~\text{for some $\beta_i\in(0,1)$, $i=1,2$;} \\
&{\rm(iii)}   ~\gamma_1\in L^{\ell}(\bR)  ~\text{for some $\ell\in(1,\infty)$ and $\gamma_2(x_2)=|x_2|^{-\beta}$ for some $\beta\in(0,1)$.}
\end{cases}
\end{align*}
Lemma \ref{lem_mix} implies that, for any $f, g\in L^{2q}(\bR^2)$ with
\begin{align} \label{def-qq}
q=
\begin{cases}
\max\{\ell_i/(2\ell_i-1) : i=1,2\}& \mbox{in case  \rm (i)} \\
\max\{1/(2-\beta_i): i=1,2\}& \mbox{in case \rm (ii)}\\
\max\{  \ell/(2\ell-1), 1/(2-\beta)\}& \mbox{in case \rm (iii)}
\end{cases},
\end{align}
 such that $f, g$ vanish outside a box with side lengths bounded by $\Lambda$, then  inequality \eqref{ineq-norms} still holds with
 \begin{align} \label{def-DD}
D_\gamma
=\begin{cases}
\| \gamma_1\|_{L^{\ell_1}(\bR)} \| \gamma_2\|_{L^{\ell_2}(\bR)}  \Lambda^{  |\frac 1 {\ell_1} - \frac 1 {\ell_2}|}& \mbox{in case  \rm (i)} \\
 C_{1,\beta_1}C_{1,\beta_2} \Lambda^{ | \beta_1-\beta_2|}   & \mbox{in case \rm (ii)}\\
 C_{1,\beta} \| \gamma_1\|_{L^\ell(\bR)}  \Lambda^{ | \frac 1\ell -\beta| }  & \mbox{in case \rm (iii)}
\end{cases},
\end{align}
where the constants $C_{1,\beta_i}$ are given as in  Lemma \ref{lem:embed}.

 From Lemma \ref{lem:1}, we deduce that in cases (\texttt{a}) and (\texttt{b}),
\begin{equation} \label{q-ineq}
\|f\|_{\cH_0^{\otimes n}}^2\leq D_{\gamma}^n \int_{[0,t]^n} \|f( \pmb{t_n},\bullet)\|_{L^{2q}(\bR^{2n})}^2 d\pmb{t_n},
\end{equation}
for any measurable  function $f:( \R_+ \times \bR^2)^n \to \bR$ such that   $f$ vanishes outside $([0,t] \times \R^2)^n$; in case (\texttt{c}), inequality \eqref{q-ineq} holds true for any measurable  function $f: (\bR_+ \times\bR^{2})^n \to \bR$ such that
\[
f(t_1,x_1, \dots, t_n, x_n) = f(\pmb{t_n}, \pmb{x_n}) = 0~\text{for $\pmb{t_n}\notin [0,t]^n$ and $\pmb{x_n}\notin \prod_{i=1}^{2n} [a_i, b_i]$}
\]
with $\Lambda :=\max\{ b_i-a_i : i=1,\dots, 2n\}<\infty$.

\medskip
Let us present a few facts  on the fundamental solution $G$.
  When $d=2$,
 \begin{equation}
\|G_t\|_{L^p(\bR^2)}=\left(\frac{(2\pi)^{1-p}}{2-p} \right)^{1/p}t^{\frac{2}{p}-1} \quad \mbox{for all}~  p \in (0,2),  \label{p-norm-G}
\end{equation}
\begin{equation}
G_t^{p}(x) \leq (2\pi t)^{q-p}  G_t^{q}(x) \quad \mbox{for all}   ~ p<q, \label{ineq-Gp}
\end{equation}
and
\begin{equation}
\mathbf{1}_{\{|x|<t\}} \leq 2\pi t G_t(x). \label{indicator}
 \end{equation}
 We will use also the following   estimate.

\begin{lemma}[Lemma 4.3 of \cite{BNZ20}]
\label{lem33BNZ}
For any $q \in (1/2,1)$ and $d=2$,
$$\int_r^t (G_{t-s}^{2q} * G_{s-r}^{2q})^{1/q}(z)ds \leq A_{q} (t-r)^{\frac{1}{q}-1} G_{t-r}^{2-\frac{1}{q}}(z),$$
where $A_{q}>0$ is a constant depending on $q$.
\end{lemma}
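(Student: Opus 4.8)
The plan is to exploit the exact scaling of the wave kernel to reduce the estimate to a normalized pointwise bound on the convolution $G_a^{2q}*G_b^{2q}$, and then to estimate that convolution through a geometric analysis governed by the relative position of the two spherical wave fronts $\{|w|=a\}$ and $\{|z-w|=b\}$.

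\emph{Step 1 (scaling reduction).} Since $G_t(z)=t^{-1}G_1(z/t)$, a change of variables yields $(G_a^{2q}*G_b^{2q})(z)=T^{2-4q}(G_{a/T}^{2q}*G_{b/T}^{2q})(z/T)$ whenever $a+b=T>0$. Taking $T=t-r$ and substituting $\beta=(s-r)/(t-r)$ in the time integral, one checks that $\int_r^t(G_{t-s}^{2q}*G_{s-r}^{2q})^{1/q}(z)\,ds$ and the claimed right-hand side both carry the factor $(t-r)^{2/q-3}$ and depend on $z$ only through $z/(t-r)$; hence the statement is equivalent to
\[
\Phi(\zeta):=\int_0^1\big(G_{1-\beta}^{2q}*G_\beta^{2q}\big)^{1/q}(\zeta)\,d\beta\ \le\ A_q\,G_1^{2-1/q}(\zeta)\qquad(\zeta\in\R^2).
\]
Both sides vanish unless $\rho:=|\zeta|<1$; for $\rho<1$ the right-hand side equals $A_q(2\pi)^{-(2-1/q)}(1-\rho^2)^{-(1-\frac1{2q})}$, so (using $1-\rho^2\asymp1-\rho$ and the invariance of the integrand under $\beta\mapsto1-\beta$) it suffices to show $\int_0^{1/2}(G_{1-\beta}^{2q}*G_\beta^{2q})^{1/q}(\zeta)\,d\beta\lesssim(1-\rho)^{-(1-\frac1{2q})}$.

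\emph{Step 2 (pointwise estimate of the convolution).} Fix $\rho\in[0,1)$ and set $\beta_*:=(1-\rho)/2$. From the explicit form of $G$,
\[
\big(G_{1-\beta}^{2q}*G_\beta^{2q}\big)(\zeta)=(2\pi)^{-4q}\!\int_{B(0,1-\beta)\cap B(\zeta,\beta)}\!\!\big((1-\beta)^2-|w|^2\big)^{-q}\big(\beta^2-|\zeta-w|^2\big)^{-q}\,dw,
\]
and the two boundary circles $\{|w|=1-\beta\}$ and $\{|\zeta-w|=\beta\}$ are internally tangent precisely when $\beta=\beta_*$. If $\beta\le\beta_*/2$, the small disk lies well inside the large one, the first factor is $\le(1-2\beta-\rho)^{-q}\le\beta_*^{-q}$ on $B(\zeta,\beta)$, and $\int_{B(0,\beta)}(\beta^2-|v|^2)^{-q}\,dv\lesssim\beta^{2-2q}$ by \eqref{p-norm-G}; thus the convolution is $\lesssim\beta_*^{-q}\beta^{2-2q}$, contributing $\lesssim\beta_*^{-1}\int_0^{\beta_*/2}\beta^{2/q-2}\,d\beta\lesssim\beta_*^{2/q-2}$ to $\Phi$. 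For $\beta$ bounded away from $0$, the decisive feature is the behaviour near $\beta=\beta_*$ (and, when $\rho\to1$, near the external tangency $\rho=1$ that the configuration approaches): there no kernel can be factored out in $L^\infty$ — in fact, because $2q>1$, the convolution is genuinely unbounded at $\beta=\beta_*$. One handles this by passing to elliptic coordinates $(\mu,\nu)$ with foci $0$ and $\zeta$, in which $|w|=\tfrac\rho2(\cosh\mu+\cos\nu)$, $|\zeta-w|=\tfrac\rho2(\cosh\mu-\cos\nu)$, $dw=|w|\,|\zeta-w|\,d\mu\,d\nu$, and the singular loci become curves $\cosh\mu\pm\cos\nu=\mathrm{const}$ that meet transversally for $\beta\ne\beta_*$ and are tangent (at $\nu=0$, on the boundary $\cosh\mu=1/\rho$) for $\beta=\beta_*$; equivalently, under the substitution $u=(1-\beta)^2-|w|^2$, $v=\beta^2-|\zeta-w|^2$ the inner integral becomes $\iint u^{-q}v^{-q}\,dw$ with a Jacobian of size $\asymp\big((1-\rho)(|\beta-\beta_*|+u+v)\big)^{-1/2}$ near the singular corner, so that after rescaling $\big(G_{1-\beta}^{2q}*G_\beta^{2q}\big)(\zeta)\lesssim(1-\rho)^{-1/2}|\beta-\beta_*|^{3/2-2q}$ — the convergence of the rescaled model integral $\iint u^{-q}v^{-q}(1+u+v)^{-1/2}\,du\,dv$ being exactly where the hypothesis $q>\tfrac12$ is used. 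Raising this to the power $1/q$ gives the exponent $3/(2q)-2>-1$ (as $q<\tfrac32$) in $|\beta-\beta_*|$, hence integrability, and integration over the window $|\beta-\beta_*|\lesssim\beta_*\asymp1-\rho$ produces a contribution $\lesssim(1-\rho)^{1/q-1}$; the remaining range of $\beta$, where the two circles are disjoint or cross transversally, contributes $\lesssim(1-\rho)^{3/(2q)-2}$ at worst.

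\emph{Step 3 (conclusion and main difficulty).} Summing, $\Phi(\zeta)\lesssim(1-\rho)^{2/q-2}+(1-\rho)^{1/q-1}+(1-\rho)^{3/(2q)-2}$, and since $q\in(\tfrac12,1)$ each of the exponents $2/q-2$, $1/q-1$, $3/(2q)-2$ is at least $-(1-\tfrac1{2q})$ — indeed $2/q-2>0$, while $(1/q-1)-\big(-(1-\tfrac1{2q})\big)=\tfrac1{2q}>0$ and $(3/(2q)-2)-\big(-(1-\tfrac1{2q})\big)=1/q-1>0$ — so $\Phi(\zeta)\lesssim(1-\rho)^{-(1-\frac1{2q})}\asymp G_1^{2-1/q}(\zeta)$, and undoing the scaling of Step 1 gives the claim with a constant $A_q$ depending only on $q$. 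I expect the main obstacle to be the near-tangency analysis of Step 2: the internal tangency of the two spherical fronts, which for a fixed $\zeta$ happens only at the single time $\beta=\beta_*$, rules out any $L^\infty$-type factorization (the convolution is actually unbounded there), and one must instead use coordinates adapted to the two distance functions to exhibit the comparatively mild blow-up of order $|\beta-\beta_*|^{3/2-2q}$; checking that this is mild enough to survive both the exponent $1/q$ and the subsequent integration in time — and that the resulting power of $1-\rho$ is dominated by the exponent $1-\tfrac1{2q}$ appearing in $G_1^{2-1/q}$ — is the remaining, somewhat delicate, bookkeeping.
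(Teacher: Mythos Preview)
The paper does not prove this lemma; it is quoted without proof from \cite{BNZ20}, so there is no in-house argument to compare against. Your scaling reduction in Step~1 is correct and is the natural first move, and the tangency picture you set up in Step~2 is the right geometry.

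The near-tangency estimate, however, breaks down on part of the range $q\in(\tfrac12,1)$. You claim the model integral $\iint_{(0,\infty)^2} u^{-q}v^{-q}(1+u+v)^{-1/2}\,du\,dv$ converges once $q>\tfrac12$; but after integrating in $v$ (this \emph{is} where $q>\tfrac12$ enters, via $\int_0^\infty w^{-q}(1+w)^{-1/2}\,dw<\infty$) one is left with $\int_0^\infty u^{-q}(1+u)^{1/2-q}\,du$, which diverges at infinity unless $q>\tfrac34$. Correspondingly, the pointwise bound $(G_{1-\beta}^{2q}*G_\beta^{2q})(\zeta)\lesssim(1-\rho)^{-1/2}|\beta-\beta_*|^{3/2-2q}$ cannot hold for $q\in(\tfrac12,\tfrac34)$: there $3/2-2q>0$, so the right-hand side vanishes at $\beta=\beta_*$, whereas a local computation shows the convolution is finite and positive at $\beta=\beta_*$ in that range. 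Your assertion that the convolution is ``genuinely unbounded at $\beta=\beta_*$'' whenever $2q>1$ is therefore also incorrect --- it blows up only for $q\ge\tfrac34$. A second issue is the Jacobian: under $w\mapsto(u,v)$ one has $|\partial(u,v)/\partial w|=4\rho|w_2|$, and near the corner $w_2^2\approx[\beta_* u-(1-\beta_*)v+(1-\rho^2)(\beta-\beta_*)]/\rho$, which vanishes along a \emph{line} in the $(u,v)$-quadrant, not only at the corner; replacing this by $(|\beta-\beta_*|+u+v)^{-1/2}$ gives a \emph{lower} bound for the true Jacobian factor, so even when your model integral converges it does not dominate the actual one. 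The strategy can be repaired by splitting cases: for $q\in(\tfrac12,\tfrac34)$ the convolution is uniformly bounded in $\beta$ and one only needs to track the $(1-\rho)$-dependence; for $q\in(\tfrac34,1)$ a rescaling argument works once the line singularity of the Jacobian is integrated out first (it is a harmless $-\tfrac12$ power across the line); $q=\tfrac34$ picks up a logarithm.
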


Finally, we record the expression of  the Fourier transform of $G_t$ for $d\in\{1,2\}$:
\begin{align}\label{FG}
 \mathcal{F} G_t(\xi) = \int_{\bR^d} e^{-i \xi \cdot x} G_t(x) dx =  \frac{\sin ( t | \xi | )}{|\xi |}=: \widehat{G}_t(\xi).
\end{align}
Note that (see e.g. (3.4) of \cite{BS17})
\begin{align}\label{bddFG}
\big\vert  \widehat{G} _t(\xi)\big\vert^2
\leq 2(t^2\vee 1) \frac{1}{1+ |\xi|^2}.
\end{align}

In Section \ref{sec4}, we need the  following two results.

\begin{lemma}\label{lem_ab}
For  $d\in\{ 1,2\}$,  let  $\gamma_0$ satisfy  the assumption {\rm (i)} on page \pageref{page2}  and  let $\mu_p$ be a  symmetric measure on $(\bR^{d})^p$, for some
integer $p\geq 1$. Then, with  $0< s\leq t$ and  $\Delta_p(t)= \{ \pmb{s_p}\in \bR_+^p: t =s_0 > s_1 > \cdots> s_p > 0  \}$,
\begin{align*}
&\quad\sum_{\sigma\in\mathfrak{S}_p}   \int_{\Delta_p(t)} d\pmb{s_p} \int_{[0,s]^p} d\pmb{\tilde{s}_p} \mathbf{1}_{\{ s > \tilde{s}_{\sigma(1)  > \cdots > \tilde{s}_{\sigma(p)} >0  }  \}} \left( \prod_{j=1}^p \gamma_0(s_j -   \tilde{s}_{j}  ) \right) \int_{\bR^{pd}}  \mu_p(d\pmb{\xi_p})  \notag    \\
& \qquad\qquad   \times      g(s_1, \xi_1, \dots, s_p, \xi_p)      g(\tilde{s}_{\sigma(1)}, \xi_{\sigma(1)}, \dots, \tilde{s}_{\sigma(p)}, \xi_{\sigma(p)})         \\
&\leq  \Gamma_t^p   \int_{\Delta_p(t)} d\pmb{s_p} \int_{\bR^{pd}}  \mu_p(d\pmb{\xi_p})    g(s_1, \xi_1, \dots, s_p, \xi_p)^2, \quad \text{with}~ \Gamma_t : = \int_{-t}^t \gamma_0(a)da,
\end{align*}
for any   measurable function $g: (\bR_+\times\bR^d)^p\to \bR_+$ for which the above integral is finite.
\end{lemma}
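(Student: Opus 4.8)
The plan is to prove Lemma \ref{lem_ab} by a symmetrization argument that reduces the double simplex integral to a single one, after which a Cauchy--Schwarz step in the $\mu_p$-variables decouples the two copies of $g$. First I would change variables in each summand: for a fixed $\sigma\in\mathfrak{S}_p$, relabel the $\tilde s$-integration so that the constraint $s>\tilde s_{\sigma(1)}>\cdots>\tilde s_{\sigma(p)}>0$ becomes an ordered simplex in new variables $\tilde s_j' := \tilde s_{\sigma(j)}$ and simultaneously permute the spectral variables $\xi_j' := \xi_{\sigma(j)}$; since $\mu_p$ is symmetric, $\mu_p(d\pmb{\xi_p})$ is invariant under this relabelling, and the product $\prod_j \gamma_0(s_j - \tilde s_j)$ becomes $\prod_j \gamma_0(s_j - \tilde s_{\sigma^{-1}(j)}')$. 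The upshot is that each of the $p!$ summands equals the \emph{same} integral over the region $\{t>s_1>\cdots>s_p>0,\ s>\tilde s_1'>\cdots>\tilde s_p'>0\}$ but with $\gamma_0$ coupling $s_j$ to $\tilde s_{\sigma^{-1}(j)}'$; summing over $\sigma$ I recover the sum over all matchings, so the left-hand side equals
\begin{align*}
\int_{\Delta_p(t)} d\pmb{s_p}\int_{\Delta_p(s)} d\pmb{\tilde s_p}\sum_{\tau\in\mathfrak{S}_p}\Big(\prod_{j=1}^p\gamma_0(s_j-\tilde s_{\tau(j)})\Big)\int_{\bR^{pd}}\mu_p(d\pmb{\xi_p})\, g(\pmb{s_p},\pmb{\xi_p})\, g(\tilde s_{\tau(1)},\xi_{\tau(1)},\dots),
\end{align*}
which after a further relabelling of $\tilde s$ inside each $\tau$-term is exactly $p!$ copies of one clean simplex-to-simplex integral — so the two presentations of the problem agree and I may as well work with the unsymmetrized one.

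Next I would apply Cauchy--Schwarz in the variable $\pmb{\xi_p}$ against the measure $\mu_p(d\pmb{\xi_p})$, treating $\prod_j\gamma_0(s_j-\tilde s_j)$ as a nonnegative weight that I split as $\sqrt{\prod\gamma_0}\cdot\sqrt{\prod\gamma_0}$ and distribute between the two factors; this gives an upper bound
\begin{align*}
\Big(\int_{\Delta_p(t)\times\Delta_p(s)}\!\!\prod_j\gamma_0(s_j-\tilde s_j)\int\mu_p(d\pmb{\xi_p})\,g(\pmb{s_p},\pmb{\xi_p})^2\Big)^{1/2}\Big(\int_{\Delta_p(t)\times\Delta_p(s)}\!\!\prod_j\gamma_0(s_j-\tilde s_j)\int\mu_p(d\pmb{\xi_p})\,g(\pmb{\tilde s_p},\pmb{\xi_p})^2\Big)^{1/2},
\end{align*}
using the relabelling symmetry of $g^2$ under the matching. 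In each factor the $\xi$-integral now decouples from the $\tilde s$-integral, so I can integrate out $\tilde s_j\in(0,s)$ first: $\int_0^s\gamma_0(s_j-\tilde s_j)\,d\tilde s_j\le\int_{-t}^t\gamma_0(a)\,da=\Gamma_t$ since $0<s_j<t$ and $\gamma_0$ is even and locally integrable. Doing this for each $j=1,\dots,p$ (dropping the ordering constraint on the $\tilde s$-simplex, which only shrinks the domain) pulls out a factor $\Gamma_t^{p/2}$ from each of the two square roots, for a total of $\Gamma_t^p$, and leaves exactly $\int_{\Delta_p(t)}d\pmb{s_p}\int\mu_p(d\pmb{\xi_p})\,g(\pmb{s_p},\pmb{\xi_p})^2$ in each factor. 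Multiplying the two identical factors gives the claimed bound.

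The step I expect to be the main obstacle — or at least the one requiring the most care — is the bookkeeping in the symmetrization: making sure that permuting $\xi$ by $\sigma$, relabelling $\tilde s$, and using symmetry of $\mu_p$ genuinely collapses the $\mathfrak{S}_p$-sum without an off-by-$p!$ error, and that the simplex constraints transform correctly (in particular that the $\tilde s$-constraint $s>\tilde s_{\sigma(1)}>\cdots$ really does turn into an ordinary ordered simplex after relabelling). A clean way to sidestep delicate index-chasing is to avoid collapsing the sum at all: apply Cauchy--Schwarz termwise to each $\sigma$-summand directly, bound each resulting $\tilde s$-integral by $\Gamma_t^p$ using $\int_0^s\gamma_0(s_j-\tilde s_{\sigma(j)})\,d\tilde s_{\sigma(j)}\le\Gamma_t$ (a change of variable within a single integral, no global relabelling needed), and then in the second Cauchy--Schwarz factor use the substitution $\xi_j\mapsto\xi_{\sigma(j)}$ together with symmetry of $\mu_p$ to see that $\int\mu_p(d\pmb{\xi_p})\,g(\tilde s_{\sigma(1)},\xi_{\sigma(1)},\dots)^2$ equals $\int\mu_p(d\pmb{\xi_p})\,g(\pmb{\tilde s_p},\pmb{\xi_p})^2$ after also relabelling the $\tilde s$ simplex; summing the resulting $p!$ equal terms and absorbing the $p!$ into the $\mathfrak{S}_p$-sum structure of the right-hand side completes the argument. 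Everything else — evenness and local integrability of $\gamma_0$, nonnegativity of $g$, Tonelli to justify interchanging integrals — is routine.
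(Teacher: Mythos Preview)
Your termwise Cauchy--Schwarz argument has a genuine $p!$ gap. For each fixed $\sigma$ you correctly obtain $I_\sigma \le \sqrt{A_\sigma B_\sigma}$ with $A_\sigma \le \Gamma_t^p Q$ and $B_\sigma \le \Gamma_t^p Q$, where $Q = \int_{\Delta_p(t)} \int \mu_p\, g^2$. But the left-hand side is $\sum_\sigma I_\sigma$, so summing your bound yields $p!\,\Gamma_t^p Q$, not $\Gamma_t^p Q$. There is no ``$\mathfrak{S}_p$-sum structure of the right-hand side'' to absorb this factor; the claimed bound is a single simplex integral. The same issue contaminates your first approach: after your relabelling, the $p!$ summands are \emph{not} copies of one integral (the first $g$ still carries a $\sigma$-dependent permutation of the $\xi$-variables that does not disappear), so the collapse you describe does not occur. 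This extra $p!$ is fatal for the applications in Section~\ref{sec41}, where one needs $\sum_p \Gamma_t^p C^p / p!$ to converge.

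The paper avoids this by replacing Cauchy--Schwarz with the elementary inequality $ab \le \tfrac12(a^2+b^2)$ (applied pointwise in $\pmb{\xi_p}$), yielding two half-sums $\tfrac12\sum_\sigma A_\sigma$ and $\tfrac12\sum_\sigma B_\sigma$ rather than a sum of geometric means. For $\sum_\sigma A_\sigma$ the $\sigma$-sum of the indicators $\mathbf{1}_{\{s>\tilde s_{\sigma(1)}>\cdots\}}$ reconstitutes $[0,s]^p$, giving $\Gamma_t^p Q$ cleanly. For $\sum_\sigma B_\sigma$ --- the non-trivial half --- the paper recognises the sum as $p!\,\langle \mathcal{I}_t, \widetilde{\mathcal{I}_s\cdot h}\rangle$ where $h(\pmb{\tilde s_p}) = \int \mu_p\, g(\pmb{\tilde s_p},\pmb{\xi_p})^2$ and $\mathcal{I}_r$ is the indicator of $\Delta_p(r)$; it then \emph{swaps the symmetrisation} to the other factor, $\langle \mathcal{I}_t, \widetilde{\mathcal{I}_s\cdot h}\rangle = \langle \widetilde{\mathcal{I}_t}, \mathcal{I}_s\cdot h\rangle$, and uses $|\widetilde{\mathcal{I}_t}| \le 1/p!$ to cancel the $p!$. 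This symmetrisation-swap is the missing idea in your argument.
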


\begin{proof}
After applying $|ab|\leq \frac{a^2+b^2}{2}$ and  using the symmetry of $ \mu_p$, we have  that the  left-hand side quantity  is  bounded by
\begin{align}
&   \frac{1}{2} \sum_{\sigma\in\mathfrak{S}_p}   \int_{\Delta_p(t)} d\pmb{s_p} \int_{[0,s]^p} d\pmb{\tilde{s}_p} \mathbf{1}_{\{ s > \tilde{s}_{\sigma(1)  > \cdots > \tilde{s}_{\sigma(p)} >0  }  \}}  h(\pmb{s_p}) \prod_{j=1}^p \gamma_0(s_j -   \tilde{s}_{j}  ) \label{termA1} \\
&\quad + \frac{1}{2} \sum_{\sigma\in\mathfrak{S}_p}   \int_{\Delta_p(t)} d\pmb{s_p} \int_{[0,s]^p} d\pmb{\tilde{s}_p} \mathbf{1}_{\{ s > \tilde{s}_{\sigma(1)  > \cdots > \tilde{s}_{\sigma(p)} >0  }  \}}  h\big(  \tilde{s}_{\sigma(1)}, ..., \tilde{s}_{\sigma(p)} \big)   \prod_{j=1}^p \gamma_0(s_j -   \tilde{s}_{j}  )  \label{termA2}
\end{align}with
\begin{align*}
    h(s_1, \dots, s_p):= \begin{cases}
{\displaystyle \int_{\bR^{pd}}  \mu_p(d\pmb{\xi_p} )  g(s_1, \xi_1, \dots, s_p, \xi_p)^2, } \quad &\text{for $\pmb{s_p}\in\Delta_p(t)$} \\
0, & \text{otherwise.}
\end{cases}
\end{align*}
Putting $\mathcal{I}_s(s_1, \dots, s_p) := \mathbf{1}_{\{ s> s_1> \cdots> s_p>0 \}} $ and  letting $\widetilde{\mathcal{I}}_s(s_1, \dots, s_p)$  be its canonical symmetrization (so that  $\big\vert \widetilde{\mathcal{I}}_s\big\vert \leq (p!)^{-1}$), we can
rewrite the term in \eqref{termA1} as
\begin{align*}
\frac{p!}{2} \int_{\Delta_p(t)}\int_{[0,s]^p} d\pmb{s_p} d\pmb{\tilde{s}_p}    h(\pmb{s_p})  \widetilde{\mathcal{I}}_s(\pmb{\tilde{s}_p} )   \prod_{j=1}^p  \gamma_0(s_j-\tilde{s}_j) &\leq \frac{1}{2} \int_{\Delta_p(t)}\int_{[0,s]^p} d\pmb{s_p} d\pmb{\tilde{s}_p}    h(\pmb{s_p})     \prod_{j=1}^p  \gamma_0(s_j-\tilde{s}_j) \\
&\leq \frac{1}{2}\Gamma_t^p  \int_{\Delta_p(t)}   d\pmb{s_p}   h(\pmb{s_p}),
\end{align*}
using also the bound
$
\sup\{ \int_0^s \gamma_0(r-r') dr' :  r\in[0,t] \}\leq \Gamma_t
$.
For  the other term \eqref{termA2}, we argue in the same way: With $(\mathcal{I}_s \cdot h)(s_1, ... , s_p) =\mathcal{I}_s(s_1, \dots, s_p) h(s_1, ... , s_p)  $, we rewrite the term \eqref{termA2} as
\begin{align*}
&\quad \frac{p!}{2}   \int_{[0,t]^p} d\pmb{s_p} \int_{[0,s]^p} d\pmb{\tilde{s}_p}  \mathcal{I}_t(\pmb{s_p}) \times \widetilde{(\mathcal{I}_s \cdot h)}(\pmb{\widetilde{s}_p})   \prod_{j=1}^p \gamma_0(s_j -   \tilde{s}_{j}  ) = \frac{p!}{2}  \big\langle  \mathcal{I}_t,  \widetilde{\mathcal{I}_s \cdot h} \big\rangle_{\mathcal{H}^{\otimes p}}   \\
 &=\frac{p!}{2}  \big\langle  \widetilde{ \mathcal{I}_t},   \mathcal{I}_s \cdot h \big\rangle_{\mathcal{H}^{\otimes p}}  \leq \frac{1}{2} \int_{[0,t]^p} d\pmb{t_p} \int_{\Delta_p(s)} h( \pmb{\widetilde{s}_p})  \prod_{j=1}^p\gamma_0(s_j-\widetilde{s}_j) \leq  \frac{1}{2} \Gamma_t^p \int_{\Delta_p(s)}   d\pmb{s_p}   h(\pmb{s_p}),
 \end{align*}
 since $h\geq 0$ and $\big\vert \widetilde{\mathcal{I}}_t\big\vert \leq (p!)^{-1}$. This concludes the proof.
 \qedhere

 \end{proof}

\begin{lemma}\label{lem_4Qp}
For  $d\in\{ 1,2\}$  let  $\gamma, \mu$ satisfy the  assumption {\rm (ii)} on page \pageref{page2}. Then, for any \emph{nonnegative} function $h\in\cP_0\cap L^1(\R^d)$,
\[
\sup_{z\in\bR^d} \int_{\bR^d}\mu(d\xi) | \cF  h(\xi +z) |^2 \leq \int_{\bR^d}\mu(d\xi)  | \cF  h(\xi ) |^2.
\]
As a consequence,  for any integer $p\geq 1$ and $w_1, \dots , w_p\in [0,t]$,
\begin{equation} \label{ineq1}
\sup_{\pmb{w_p}\in[0,t]^p }\sup_{\pmb{z_p}\in\bR^{dp}}  \int_{\bR^{dp}} \mu(d\pmb{\xi_p}) \prod_{j=1}^{p} \big\vert \widehat{ G}_{w_j}(\xi_j + z_j ) \big\vert^2 \leq   \left( 2(t^2\vee 1)  \int_{\bR^d}   \frac{\mu(d\xi)}{1+ |\xi|^2} \right)^p.
\end{equation}

\end{lemma}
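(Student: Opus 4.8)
The plan is to first prove the supremum bound for a general nonnegative $h\in\cP_0\cap L^1(\bR^d)$ (so that $\int_{\bR^{2d}}\gamma(x-y)h(x)h(y)\,dx\,dy<\infty$), and then deduce \eqref{ineq1} from it by Tonelli's theorem together with the elementary bound \eqref{bddFG}. Fix $z\in\bR^d$. The key step is to rewrite the spectral integral $\int_{\bR^d}\mu(d\xi)\,|\cF h(\xi+z)|^2$ on the ``physical side''. Since $\cF h(\xi+z)=\int_{\bR^d}e^{-i(\xi+z)\cdot x}h(x)\,dx=\cF g_z^{c}(\xi)-i\,\cF g_z^{s}(\xi)$ with $g_z^{c}(x):=h(x)\cos(z\cdot x)$ and $g_z^{s}(x):=h(x)\sin(z\cdot x)$, both real and dominated in modulus by $h$ (hence in $L^1(\bR^d)\cap|\cP_0|$), expanding $|\cdot|^2$ and applying the Parseval-type relation \eqref{parseval} to $g_z^{c},g_z^{s}$ — the cross term drops out because $\langle g_z^{c},g_z^{s}\rangle_0$ is real (it is the $\langle\cdot,\cdot\rangle_0$-inner product of two real functions) — yields the identity
\[
\int_{\bR^d}\mu(d\xi)\,|\cF h(\xi+z)|^2=\|g_z^{c}\|_0^2+\|g_z^{s}\|_0^2=\int_{\bR^{2d}}\gamma(x-y)\cos\!\big(z\cdot(x-y)\big)h(x)h(y)\,dx\,dy,
\]
the last equality being $\cos(z\cdot x)\cos(z\cdot y)+\sin(z\cdot x)\sin(z\cdot y)=\cos(z\cdot(x-y))$. (When $\gamma$ is only a measure, the right-hand side reads $\int_{\bR^d}\gamma(dv)\cos(z\cdot v)\,\Psi(v)$ with $\Psi(v):=\int_{\bR^d}h(v+y)h(y)\,dy$.) Since $h\ge 0$ and $\gamma$ is a nonnegative measure, the integrand $\gamma(x-y)h(x)h(y)$ (resp.\ the function $\Psi$) is nonnegative, so $|\cos|\le 1$ gives
\[
0\le\int_{\bR^d}\mu(d\xi)\,|\cF h(\xi+z)|^2\le\int_{\bR^{2d}}\gamma(x-y)h(x)h(y)\,dx\,dy=\int_{\bR^d}\mu(d\xi)\,|\cF h(\xi)|^2,
\]
and taking the supremum over $z\in\bR^d$ proves the first assertion.

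For \eqref{ineq1}, recall $\cF G_w=\widehat G_w$ with $\widehat G_w(\xi)=\sin(w|\xi|)/|\xi|$, and that for $d\in\{1,2\}$ and $w\in[0,t]$ the function $G_w$ is nonnegative and lies in $L^1(\bR^d)$; moreover it is a standard consequence of Dalang's condition \eqref{DC} and \eqref{bddFG} that $\int_{\bR^d}\mu(d\xi)|\widehat G_w(\xi)|^2\le 2(t^2\vee 1)\int_{\bR^d}(1+|\xi|^2)^{-1}\mu(d\xi)<\infty$, so that $G_w\in\cP_0\cap L^1(\bR^d)$. Hence the first part applies to each $h=G_{w_j}$. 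All integrands being nonnegative, Tonelli's theorem factorizes
\[
\int_{\bR^{dp}}\mu(d\pmb{\xi_p})\prod_{j=1}^{p}\big|\widehat G_{w_j}(\xi_j+z_j)\big|^2=\prod_{j=1}^{p}\Big(\int_{\bR^d}\mu(d\xi)\,\big|\widehat G_{w_j}(\xi+z_j)\big|^2\Big),
\]
and by the first part each factor is at most $\int_{\bR^d}\mu(d\xi)|\widehat G_{w_j}(\xi)|^2\le 2(t^2\vee 1)\int_{\bR^d}(1+|\xi|^2)^{-1}\mu(d\xi)$ (using $w_j\le t$ and \eqref{bddFG}). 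Multiplying the $p$ bounds gives \eqref{ineq1}, uniformly in $\pmb{w_p}\in[0,t]^p$ and $\pmb{z_p}\in\bR^{dp}$.

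I expect the only genuinely delicate point to be the passage from the spectral integral to the physical-side integral, i.e.\ the first displayed identity: one must justify applying \eqref{parseval} to the $L^1(\bR^d)\cap|\cP_0|$ functions $g_z^{c},g_z^{s}$ and the absolute convergence needed to pull $\mathrm{Im}$ out of the integral of the cross term; everything else — Tonelli, the elementary bound \eqref{bddFG}, and $G_w\in\cP_0\cap L^1(\bR^d)$ — is routine. Note that the nonnegativity of $h$ (equivalently, of the fundamental solution $G_{w_j}$) is used in an essential way: it is precisely what makes the physical-side integrand $\gamma(x-y)h(x)h(y)$ (resp.\ $\Psi$) nonnegative, so that replacing $\cos(z\cdot(x-y))$ by $1$ can only increase the integral.
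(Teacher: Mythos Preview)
Your proof is correct and follows essentially the same route as the paper: rewrite $\int\mu(d\xi)|\cF h(\xi+z)|^2$ on the physical side as $\int_{\bR^{2d}}\gamma(x-y)h(x)h(y)\,e^{-iz\cdot(x-y)}\,dx\,dy$ (the paper does this in one line via $\cF h(\xi+z)=\cF(e^{-iz\cdot}h)(\xi)$ and $|e^{-iz\cdot(x-y)}|=1$, while you take the real part explicitly via the cosine identity), then use $h\ge 0$ and $\gamma\ge 0$ to bound the oscillatory factor by $1$. The deduction of \eqref{ineq1} by factorization and \eqref{bddFG} is identical to the paper's.
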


\begin{proof} Since $h \geq 0$, using the fact that
$\cF h(\xi+z) = \cF (e^{-iz \cdot} h)(\xi)$ together with    $|e^{-iz (x +y)}|=1$, we get
\begin{align*}
 \int_{\bR^d} \mu(d\xi) \big\vert \cF h (\xi + z ) \big\vert^2  =   \int_{\bR^{2d}} e^{-iz (x +y)} h(x) h(y) \gamma(x-y) dxdy \leq  \int_{\bR^{2d}}  h(x) h(y) \gamma(x-y) dxdy,
 \end{align*}
 which is exactly $ \int_{\bR^d} \mu(d\xi) \big\vert \cF h(\xi  ) \big\vert^2.$
In particular, by \eqref{bddFG},
\[
\sup_{z\in\bR^d}\int_{\bR^d} \mu(d\xi) \big\vert  \widehat{G}_s (\xi + z ) \big\vert^2 \leq \int_{\bR^d} \mu(d\xi) \big\vert \widehat{G}_s (\xi  ) \big\vert^2 \leq 2(s^2\vee 1) \int_{\bR^d}  \frac{\mu(d\xi)}{1+ |\xi|^2},
\]
which is finite due to Dalang's condition \eqref{DC}. Applying this inequality several times yields
\begin{align*}
&  \int_{\bR^{dp}} \mu(d\pmb{\xi_p}) \prod_{j=1}^{p} \big\vert  \widehat{G}_{w_j}(\xi_j + z_j ) \big\vert^2  \leq   \left( 2(t^2\vee 1)  \int_{\bR^d}   \frac{\mu(d\xi)}{1+ |\xi|^2} \right)^p,
\end{align*}
which is a uniform bound over $(\pmb{z_p}, \pmb{w_p})\in\bR^{dp}\times [0,t]^p$.
\end{proof}

\section{$L^p$ estimates for Malliavin derivatives} \label{sec3}

This section is mainly devoted to the proof of  Theorem \ref{MR1}.  The proof will be done in several steps organized in Sections \ref{sec31},  \ref{sec32},   \ref{sec33}, \ref{sec34} and \ref{sec35}. In Section \ref{sec36}, we record a few consequences of  Theorem  \ref{MR1} that will be used in the proof of Theorem \ref{MR3}   in Section \ref{sec5}.

\subsection{Step 1: Preliminaries} \label{sec31}

 Let us first introduce some handy notation. Recall that for
 $
 \pmb{t_n}:=(t_1,\ldots,t_n)
 $
 and
  $
  \pmb{x_n}:=(x_1,\ldots,x_n)
  $,
  we defined in \eqref{eq:3}
\begin{equation} \notag
f_{t,x,n}(\pmb{t_n},\pmb{x_n})=G_{t-t_{1}}(x-x_{1})
G_{t_1-t_2}(x_1-x_2) \cdots G_{t_{n-1}-t_n}(x_{n-1}-x_n),
\end{equation}
with the convention \eqref{rule1}, and  we denote by $\widetilde{f}_{t,x,n}$ the symmetrization of $f_{t,x,n}$; see \eqref{eq:3wt}.
We treat the time-space variables $(t_i,x_i)$ as one coordinate and
we write
\[
f_{t,x,n}(r,z;\pmb{t_{n-1}},\pmb{x_{n-1}}) := f_{t,x,n}(r,z,  t_1,x_1, \ldots, t_{n-1}, x_{n-1})
\]
as in Notation \textbf{A}-(3). Recall that the solution $u(t,x)$ has the Wiener chaos expansion
 \[
 u(t,x)=1+ \sum_{n=1}^ \infty   I_n(f_{t,x,n}),
 \]
where  the kernel   $f_{t,x,n}$ is not symmetric and in this case, by definition,     $I_n(f_{t,x,n})= I_n\big(\widetilde{f}_{t,x,n} \big)$.

\medskip

Our first goal is to  show that, for any fixed $(r,z) \in [0,t] \times \R^d$ and for any $p\in [2,\infty)$, the series
 \begin{align}
 \sum_{n\geq 1} n I_{n-1}\big( \widetilde{f}_{t,x,n}(r,z; \bullet)  \big) \label{series1}
 \end{align}
  converges in $L^p(\Omega)$, and the sum, denoted by $D_{r,z}u(t,x) $, satisfies the $L^p$ estimates  \eqref{goalz}.

  The first term of the series \eqref{series1} is $\widetilde{f}_{t,x,1}(r,z)=G_{t-r}(x-z)$. In general, for any $n\geq 1$,
\begin{equation}
\label{decomp-ftx}
\widetilde{f}_{t,x,n}(r,z;\bullet)  = \frac{1}{n} \sum_{j=1}^n
h^{(j)}_{t,x,n}(r,z;\bullet),
\end{equation}
where  $h^{(j)}_{t,x,n}(r,z;\bullet)$ is the symmetrization of the function $(\pmb{t_{n-1}},\pmb{x_{n-1}})\to f^{(j)}_{t,x,n}(r,z; \pmb{t_{n-1}},\pmb{x_{n-1}})$, which is obtained from $f_{t,x,n}$ by placing $r$ on position $j$ among the time instants, and $z$ on position $j$ among the space points: With the convention \eqref{rule1},
\begin{align} \nonumber
&f^{(j)}_{t,x,n}(r,z; \pmb{t_{n-1}},\pmb{x_{n-1}})\\
&\quad =G_{t-t_1}(x-x_1) \cdots G_{t_{j-1}-r}(x_{j-1}-z)G_{r-t_j}(z-x_j) \cdots G_{t_{n-2}-t_{n-1}}(x_{n-2}-x_{n-1}). \label{eq:4}
\end{align}
That is,
\begin{align}
f^{(j)}_{t,x,n}(r,z; \bullet) = f_{t,x,j}^{(j)}(r,z;\bullet)\otimes f_{r,z,n-j},\label{decomp-fj}
\end{align}
with $ f_{r,z,1}=1$.   For example, $f^{(1)}_{t,x,1}(r,z; \bullet) = G_{t-r}(x-z)$ and $f^{(1)}_{t,x,n}(r,z; \pmb{t_{n-1}}, \pmb{x_{n-1}} )= G_{t-r}(x-z) f_{r,z,n-1}( \pmb{t_{n-1}}, \pmb{x_{n-1}} )$.
By the definition of the symmetrization,
\begin{equation}
\label{def-h}
h^{(j)}_{t,x,n}(r,z;\pmb{t_{n-1}},\pmb{x_{n-1}})=\frac{1}{(n-1)!} \sum_{\sigma \in \mathfrak{S}_{n-1}}f_{t,x,n}^{(j)}(r,z;t_{\sigma(1)},x_{\sigma(1)}, \ldots,t_{\sigma(n-1)},x_{\sigma(n-1)}).
\end{equation}
 Similarly,
  for $\pmb{s_m}\in [0,t]^m$ and $\pmb{y_m}\in\bR^{dm}$,
  and
  for any $p\in [2,\infty)$,
   we will show that
\begin{align}
D^m_{\pmb{s_m}, \pmb{y_m} }u(t,x) := \sum_{n\geq m} \frac{n!}{(n-m)!} I_{n-m}\big( \widetilde{f}_{t,x,n}(\pmb{s_m}, \pmb{y_m}; \bullet)  \big) \label{seriesm}
 \end{align}
 converges in $L^p(\Omega)$. Note that if the series \eqref{seriesm} converges in $L^p(\Omega)$, we can see that almost surely,  the function
\[
(\pmb{s_m}, \pmb{y_m} ) \mapsto   D^m_{\pmb{s_m}, \pmb{y_m} }u(t,x)
\]
is \emph{symmetric}, meaning that for any $\sigma\in\mathfrak{S}_m$,
\[
D_{s_1, y_1} D_{s_2, y_2} \cdots D_{s_m, y_m} u(t,x) = D_{s_{\sigma(1)}, y_{\sigma(1)}} D_{s_{\sigma(2)}, y_{\sigma(2)}} \cdots D_{s_{\sigma(m)} , y_{\sigma(m)}} u(t,x).
\]
\emph{From now on}, we assume $t>s_1> ... > s_m>0$ without losing any generality.
Note that like    \eqref{decomp-ftx}, we can write
\begin{align}
 \frac{n!}{(n-m)!}   \widetilde{f}_{t,x,n}(\pmb{s_m}, \pmb{y_m}; \bullet) = \sum_{\pmb{i_m}\in \Delta_{n,m}} h^{(\pmb{i_m})}_{t,x,n}(\pmb{s_m}, \pmb{y_m}; \bullet), \label{eq40}
\end{align}
where $\pmb{i_m}\in \Delta_{n,m}$  means $1 \le i_1 < i_2 < \cdots < i_m \le n$  and $h^{(\pmb{i_m})}_{t,x,n}(\pmb{s_m}, \pmb{y_m}; \bullet)$ is the symmetrization of the function $f^{(\pmb{i_m})}_{t,x,n}(\pmb{s_m}, \pmb{y_m}; \bullet)$ that is defined by
\begin{align}
&f^{(\pmb{i_m})}_{t,x,n}(\pmb{s_m}, \pmb{y_m}; \bullet) \label{decomp-fjj} \\
&   = f^{(i_1)}_{t,x,i_1}(s_1, y_1; \bullet) \otimes  f^{(i_2-i_1)}_{s_1,y_1,i_2-i_1}(s_2, y_2; \bullet) \otimes \cdots \otimes  f^{(i_m-i_{m-1})}_{s_{m-1},y_{m-1}, i_m- i_{m-1}}(s_m, y_m; \bullet) \otimes   f_{s_m, y_m, n-i_m},  \notag
\end{align}
which is a generalization of \eqref{decomp-fj}.

\subsection{Step 2: Reduction to white noise in time} \label{sec32}

Let $\dot{\mathfrak{X}}$ denote the Gaussian noise that is white in time and has the same spatial correlation as $W$ and let $\{\mathfrak{X}(f): f\in\cH_0\}$ denote the resulting isonormal Gaussian process; see Section \ref{sec21}.

 For any $p\in[2,\infty)$, we deduce from  \eqref{seriesm} and  \eqref{eq40}  that
 \begin{align*}
 \big\| D^m_{\pmb{s_m}, \pmb{y_m} }u(t,x) \big\|_p &\leq   \sum_{n\geq m} \left\|  I_{n-m}\left(  \sum_{\pmb{i_m}\in \Delta_{n,m} } h^{(\pmb{i_m})}_{t,x,n}(\pmb{s_m}, \pmb{y_m}; \bullet)  \right) \right\|_p  \quad \text{by triangle inequality} \\
 &\leq   \sum_{n\geq m} (p-1)^{\frac{n-m}{2}} \left\|  I_{n-m}\left(  \sum_{\pmb{i_m}\in\Delta_{n,m}} h^{(\pmb{i_m})}_{t,x,n}(\pmb{s_m}, \pmb{y_m}; \bullet)  \right) \right\|_2  \quad \text{by \eqref{hyper}}.
 \end{align*}
The function $\sum_{\pmb{i_m}\in \Delta_{n,m}} h^{(\pmb{i_m})}_{t,x,n}(\pmb{s_m}, \pmb{y_m}; \bullet)$ vanishes outside $\big([0,t]\times\bR^d\big)^{n-m}$, thus we deduce from \eqref{white-ineq} that
\begin{align*}
&\quad \left\|  I_{n-m}\left(  \sum_{\pmb{i_m}\in \Delta_{n,m}} h^{(\pmb{i_m})}_{t,x,n}(\pmb{s_m}, \pmb{y_m}; \bullet)  \right) \right\|_2^2 = (n-m)! \left\|  \sum_{\pmb{i_m}\in \Delta_{n,m}} h^{(\pmb{i_m})}_{t,x,n}(\pmb{s_m}, \pmb{y_m}; \bullet)\right\|_{\cH^{\otimes( n-m)}}^2 \\
& \leq \Gamma_t^{n-m}  (n-m)! \left\|  \sum_{\pmb{i_m}\in \Delta_{n,m}} h^{(\pmb{i_m})}_{t,x,n}(\pmb{s_m}, \pmb{y_m}; \bullet)\right\|_{\cH_0^{\otimes (n-m)}}^2 = \Gamma_t^{n-m} \left\|  I^{\mathfrak{X}}_{n-m}\left(  \sum_{\pmb{i_m}\in \Delta_{n,m}} h^{(\pmb{i_m})}_{t,x,n}(\pmb{s_m}, \pmb{y_m}; \bullet)  \right) \right\|_2^2.
\end{align*}
Therefore, we get
 \begin{align} \label{Dm_Lp}
 \big\| D^m_{\pmb{s_m}, \pmb{y_m} }u(t,x) \big\|_p  &\leq   \sum_{n\geq m} \big[ (p-1) \Gamma_t\big]^{\frac{n-m}{2}} \left\|  \sum_{\pmb{i_m}\in\Delta_{n,m}} I^{\mathfrak{X}}_{n-m} \big(  f^{(\pmb{i_m})}_{t,x,n}(\pmb{s_m}, \pmb{y_m}; \bullet)\big)   \right\|_2.
 \end{align}
 This leads to
\begin{align}\label{b1}
 \big\| D^m_{\pmb{s_m}, \pmb{y_m} }u(t,x) \big\|_p   \leq   \sum_{n\geq m} \big[ (p-1) \Gamma_t\big]^{\frac{n-m}{2}} \sqrt{\mathcal{Q}_{m,n}},
\end{align}
with
\begin{align} \label{Qmn}
\mathcal{Q}_{m,n} :&=\bE\left[  \left( \sum_{\pmb{i_m}\in\Delta_{n,m}}I^{\mathfrak{X}}_{n-m} \big(  f^{(\pmb{i_m})}_{t,x,n}(\pmb{s_m}, \pmb{y_m}; \bullet)\big)   \right)^2 \right]  \le \binom{n}{m}   \sum_{\pmb{i_m}\in\Delta_{n,m}}  \bE\left( I^{\mathfrak{X}}_{n-m} \big(  f^{(\pmb{i_m})}_{t,x,n}(\pmb{s_m}, \pmb{y_m}; \bullet)\big)^2   \right).
\end{align}
The product formula \eqref{prod} and the decomposition \eqref{decomp-fjj} yield, with $(i_0, s_0, y_0)=(0, t,x)$,
\begin{align}
&\mathcal{Q}_{m,n} \leq \binom{n}{m}   \sum_{\pmb{i_m}\in\Delta_{n,m}}  \bE\left(  I^{\mathfrak{X}}_{n - i_{m}}\big( f_{s_m, y_m, n-i_m} \big)^2   \prod_{j=1}^m I^{\mathfrak{X}}_{i_j - i_{j-1}-1} \Big( f^{ (i_j- i_{j-1})  }_{s_{j-1}, y_{j-1}, i_j-i_{j-1}}(s_j, y_j;\bullet)  \Big)^2   \right) \notag \\
&= \binom{n}{m}   \sum_{\pmb{i_m}\in\Delta_{n,m}}  \big\|  I^{\mathfrak{X}}_{n - i_{m}}\big( f_{s_m, y_m, n-i_m} \big)\big\|^2_2  \times  \prod_{j=1}^m\Big\| I^{\mathfrak{X}}_{i_j - i_{j-1}-1} \Big( f^{ (i_j- i_{j-1})  }_{s_{j-1}, y_{j-1}, i_j-i_{j-1}}(s_j, y_j;\bullet) \Big) \Big\|^2_2,   \label{ineq_Qmn}
\end{align}
where the last equality is obtained by using the independence among the random variables inside the expectation. It remains to estimate two typical terms:
\begin{align}
\big\| I^{\mathfrak{X}}_j(f_{r,z,j}) \|_2^2 \quad{\rm and}\quad \Big\| I^{\mathfrak{X}}_{j-1}(f^{(j)}_{t,x,j}(r,z;\bullet ) \big) \Big\|_2^2 ~\text{for $1\leq j\leq n$ and $t>r$}. \label{types}
\end{align}
The first term in \eqref{types} can be estimated as follows. Using Fourier transform in space (see \eqref{FG}), we have, with $t_0=r$,
\begin{align}
\big\| I^{\mathfrak{X}}_j(f_{r,z,j}) \|_2^2  &= j! \big\| \widetilde{f}_{r,z,j} \big\|_{\cH_0^{\otimes j}}^2 = \int_{[0,r]^j} \big\| f_{r,z,j}(\pmb{t_j}, \bullet) \big\|_0^2 d\pmb{t_j}  \label{eq:-1} \\
&=  \int_{r>t_1 > \cdots> t_j >0}  \int_{\bR^{dj}}\big\vert \mathcal{F} f_{r,z,j}(\pmb{t_j}, \pmb{\xi_j})  \big\vert^2 \mu(d\pmb{\xi_j})    d\pmb{t_j}  \notag \\
&= \int_{r>t_1 > \cdots> t_j >0}  \left( \int_{\bR^{dj}} \prod_{k=0}^{j-1} \big\vert  \cF G_{t_{k} - t_{k+1}}(\xi_{k+1} +\cdots+ \xi_j ) \big\vert^2  \mu(d\xi_k)  \right)  d\pmb{t_j}.   \notag
\end{align}
By Lemma \ref{lem_4Qp},
\begin{align}
\big\| I^{\mathfrak{X}}_j(f_{r,z,j}) \|_2^2 &\leq \frac{C^j}{j!},  \label{EST_dj}
\end{align}
where $C= 2(t^2+1) \int_{\R^d}  ( 1+ |\xi |^2)^{-1}\mu(d\xi)$.

\begin{remark}\label{rem_Lp} {\rm By the arguments that lead to \eqref{Dm_Lp}, we can also get, for any $p\in[2,\infty)$,
\[
\big\| u(t,x) \big\|_p \leq 1 + \sum_{n\geq 1} \big\|  I_n(f_{t,x,n} )\big\|_p   \leq 1 + \sum_{n\geq 1} \big[ (p-1) \Gamma_t\big] ^{n/2}  \big\|  I^{\mathfrak{X}}_n(f_{t,x,n} )\big\|_2
\]
and then the estimate \eqref{EST_dj} implies $u(t,x)\in L^p(\Omega)$. Moreover,
\begin{align}\label{calsoRem31}
\sup_{(s,y)\in[0,t]\times\bR^d  } \| u(s,y) \|_p <+\infty ~ \text{for any $t\in\bR_+$.}
\end{align}
 This is done under the Dalang's condition \eqref{DC} only and the case $p=2$ provides another proof of \cite[Theorem  4.4]{BS17} when $d=1,2$.}
\end{remark}

In what follows, we estimate the second term in \eqref{types} separately for the cases $d=1$ and $d=2$. As usual,
we will use $C$ to denote an immaterial constant that may vary from line to line.

\subsubsection{Estimation of $\Big\| I^{\mathfrak{X}}_{j-1}(f^{(j)}_{t,x,j}(r,z;\bullet ) \big) \Big\|_2^2$ when $d=1$}
When $d=1$, $G_t(x) = \frac{1}{2} \mathbf{1}_{\{|x| <t \}}$.  For $j=1$, $I^{\mathfrak{X}}_{j-1}(f^{(j)}_{t,x,j}(r,z;\bullet ) \big)=G_{t-r}(x-z)$ with the convention \eqref{rule1}. For $j\geq 2$, it follows from  the (modified) isometry property \eqref{miso} that
\begin{align*}
\Big\| I^{\mathfrak{X}}_{j-1}(f^{(j)}_{t,x,j}(r,z;\bullet ) \big) \Big\|_2^2  = (j-1)! \Big\|   h^{(j)}_{t,x,j}(r,z;\bullet )   \Big\|_{\cH_0^{\otimes (j-1)}}^2 = \int_{[r,t]^{j-1}} \big\| f^{(j)}_{t,x,j}(r,z; \pmb{t_{j-1}}, \bullet ) \big\|_{0}^2 d\pmb{t_{j-1}},
\end{align*}
where we recall that $h^{(j)}_{t,x,j}(r,z;\bullet )  $ is the symmetrization of $f^{(j)}_{t,x,j}(r,z;\bullet )  $; see \eqref{def-h}.  Then, taking advantage of the simple form of $G_t(x)$ for $d=1$, we get
\[
0\leq  f^{(j)}_{t,x,j}(r,z; \pmb{t_{j-1}}, \bullet )  \leq  \frac{1}{2}\mathbf{1}_{\{ |x - z| < t-r\}}  f_{t,x,j-1}(\pmb{t_{j-1}}, \bullet ),
\]
from which we further get
\begin{align}
\Big\| I^{\mathfrak{X}}_{j-1}(f^{(j)}_{t,x,j}(r,z;\bullet ) \big) \Big\|_2^2  &\leq   G^2_{t-r}(x-z)  \int_{[r,t]^{j-1}} \big\| f_{t,x,j-1}(\pmb{t_{j-1}}, \bullet ) \big\|_0^2 d\pmb{t_{j-1}} \notag  \\
&\leq  \frac{C^{j-1}}{(j-1)!} G^2_{t-r}(x-z), \label{EST_d=1}
\end{align}
where the last inequality follows from \eqref{EST_dj} and \eqref{eq:-1}.

\subsubsection{Estimation of $\Big\| I^{\mathfrak{X}}_{j-1}(f^{(j)}_{t,x,j}(r,z;\bullet ) \big) \Big\|_2^2$ when $d=2$}
Let  $q$ be defined as in  \eqref{def-q} and  \eqref{def-qq} and we fix such a $q$ \emph{throughout this subsection}.   For $j=1$, $I^{\mathfrak{X}}_{j-1}(f^{(j)}_{t,x,j}(r,z;\bullet ) \big)=G_{t-r}(x-z)$ with the convention \eqref{rule1}. For $j\geq 2$, we begin with
\begin{align*}
\Big\| I^{\mathfrak{X}}_{j-1}(f^{(j)}_{t,x,j}(r,z;\bullet ) \big) \Big\|_2^2   &= \int_{[r,t]^{j-1}} \big\| f^{(j)}_{t,x,j}(r,z; \pmb{t_{j-1}}, \bullet ) \big\|_0^2 d\pmb{t_{j-1}}, \\
 &\leq C^{j-1} \int_{t> t_1 > \cdots >t_{j-1}>r} \big\| f^{(j)}_{t,x,j}(r,z; \pmb{t_{j-1}}, \bullet) \big\|_{L^{2q}(\bR^{2j-2}) }^2  d\pmb{t_{j-1}} =C^{j-1} \mathcal{T}_j,
\end{align*}
where we applied Lemma \ref{lem:1} for the inequality above\footnote{The function $\pmb{x_{j-1}} \to f_{t,x,j}^{(j)}(\pmb{t_{j-1}},\pmb{x_{j-1}})=G_{t-t_1}(x-x_1)G_{t_1-t_2}(x_1-x_2)\ldots
 G_{t_{j-1}-r}(x_{j-1}-z)$ has support contained in $\{\pmb{x_{j-1}}\in \bR^{2(j-1)};|x_i-x|<t-t_i, \ \mbox{for all} \ i=1,\ldots,j-1\}$.}  and we denote
\begin{align}\label{def_GJ}
 \mathcal{T}_j :=    \int_{ t>t_1>\cdots>t_{j-1}>r} d\pmb{t_{j-1}}    \left( \int_{\bR^{2(j-1)}} G^{2q}_{t-t_1}(x-x_1)     \cdots G^{2q}_{t_{j-1} -r}(x_{j-1} -z)  d \pmb{x_{j-1}} \right) ^{ 1/q}.
\end{align}
 Note that we can choose  $C$  to  depend only on $(t,\gamma, q)$ and be increasing in $t$.

  \medskip

  \noindent
  \textbf{Case $j=2$}. In this case, we deduce from Lemma  \ref{lem33BNZ} and \eqref{ineq-Gp} that
\begin{equation}
\label{eq-j2}
   \mathcal{T}_2 =   \int_r^t dt_1 ( G^{2q}_{t-t_1} \ast G^{2q}_{t_1-r} )^{1/q} (x-z) \leq CG_{t-r}^{2-\frac 1q}(x-z) \leq C  G^2_{t-r} (x-z).
\end{equation}

  \noindent
  \textbf{Case $j\ge 3$}. In this case,  we use Minkowski inequality with
  respect to the  norm in $L^{1/q}( [t_2,t] ,dt_1)$ in order to get
 \begin{align*}
 \mathcal{T}_{j}  &\leq
 \int_{ t>t_2>\cdots>t_{j-1}>r}
   \Bigg( \int_{\bR^{2(j-2)}}   \left[ \int_{t_2} ^t  \big( G_{t-t_1}^{2q} \ast G_{t_1-t_2}^{2q}\big)^{1/q}(x-x_2)    dt_1 \right] ^q\\
   & \qquad \times
 G^{2q}_{t_2-t_3}(x_2-x_3) \cdots G^{2q}_{t_{j-1} -r}(x_{j-1} -z)   dx_2 \cdots dx_{j-1}    \Bigg) ^{1/q} dt_2 \cdots dt_{j-1}.
 \end{align*}
 Applying Lemma \ref{lem33BNZ} yields
 \begin{align}
 \mathcal{T}_{j}  &\le     A_q
 \int_{t> t_2 > \cdots>  t_{j-1}>r} (t-t_2)^{\frac{1}{q}-1}
   \Bigg( \int_{\bR^{2(j-2)}}     G^{2q-1} _{t-t_2}(x-x_2)  \notag \\
   & \qquad \times
 G^{2q}_{t_2-t_3}(x_2-x_3) \cdots G^{2q}_{t_{j-1} -r}(x_{j-1} -z)   dx_2 \cdots dx_{j-1}    \Bigg) ^{1/q} dt_2 \cdots dt_{j-1}. \label{cont1}
 \end{align}
 If $j=3$, we have
 \begin{align*}
  \mathcal{T}_3   &\le    A_q
 \int_r^t (t-t_2)^{\frac{1}{q}-1} \Bigg( \int_{\bR^{2}}     G^{2q-1} _{t-t_2}(x-x_2)   G^{2q}_{t_2-r}(x_2-z)    dx_2    \Bigg) ^{ 1/q} dt_2.
 \end{align*}
 Owing to \eqref{ineq-Gp}, we can bound $G^{2q-1} _{t-t_2}(x-x_2) $
 by  $(2\pi) (t-t_2)G^{2q} _{t-t_2}(x-x_2) $, and then we apply again Lemma \ref{lem33BNZ} and \eqref{ineq-Gp} to conclude that
\begin{equation}
\label{eq-j3}
 \mathcal{T}_3 \leq   A_q^{2} (2\pi)^{\frac{1}{q}} (t-r)^{\frac{3}{q}-2} G_{t-r}^{2-\frac{1}{q}}(x-z)  \leq C G^2_{t-r}(x-z).
\end{equation}

  For $j\geq 4$,  we continue with the estimate \eqref{cont1}.
 We can first apply Minkowski inequality with   respect to the norm $L^{1/q}\big( [t_4, t_2], dt_3\big)$ and  then apply  Lemma \ref{lem33BNZ} to obtain
  \begin{align}
 \mathcal{T}_{j}  &\le       A_q^{2}
 \int_{t> t_2 >t_4 > \cdots > t_{j-1}> r} dt_2 dt_4  \cdots
 dt_{j-1}  (t-t_2)^{\frac{1}{q}-1}(t_2-t_4)^{\frac{1}{q}-1}
   \Bigg( \int_{\bR^{2(j-3)}}      G^{2q-1} _{t-t_2}(x-x_2)   \nonumber \\
   & \qquad \times G^{2q-1} _{t_2-t_4} (x_2-x_4)
 G^{2q}_{t_4-t_5}(x_4-x_5) \cdots G^{2q}_{t_{j-1} -r}(x_{j-1} -z)   dx_2  dx_4 \cdots dx_{j-1}    \Bigg) ^{1/q}.
 \label{eq:2}
 \end{align}
Note that
\[
G^{2q-1} _{t-t_2}(x-x_2) G^{2q-1} _{t_2-t_4}(x_2-x_4) \leq
\mathbf{1}_{\{ |x-x_4 | \le  t-t_4\}} G^{2q-1} _{t-t_2}(x-x_2)
G^{2q-1} _{t_2-t_4}(x_2-x_4).
\]
Then, by {Cauchy-Schwarz} inequality and \eqref{p-norm-G}, we
 can infer that
    \begin{align*}
 \int_{\R^2}   G^{2q-1} _{t-t_2}(x-x_2) G^{2q-1} _{t_2-t_4} (x_2-x_4)
dx_2
 & \le  \mathbf{1}_{\{ |x-x_4 | \le t-t_4\}}  \| G^{2q-1} _{t-t_2} \| _{L^2(\R^2)}   \| G^{2q-1} _{t_2-t_4} \| _{L^2(\R^2)}  \\
 & =c_1 (t-t_2)^{2-2q}(t_2-t_4)^{2-2q} \mathbf{1}_{\{ |x-x_4 | \le t-t_4\}},
  \end{align*}
 where $c_1= \frac{(2\pi)^{3-4q}}{4-4q}$.  Thus, substituting  this estimate into \eqref{eq:2},   we end up with
\begin{align*}
 \mathcal{T}_{j}  & \leq  A_q^{2} c_1^{1/q}  \int_{t> t_2 >t_4 > \cdots > t_{j-1}> r}
dt_2
dt_4  \cdots  dt_{j-1}  (t-t_2)^{\frac3q-3}(t_2-t_4) ^{\frac3q-3} \\
& \qquad \times \left( \int_{\R^{2(j-4)}} \mathbf{1}_{\{ |x-x_4 |
\le t-t_4\}}
 G^{2q}_{t_4-t_5}(x_4-x_5) \cdots G^{2q}_{t_{j-1} -r}(x_{j-1} -z)    dx_4
 \cdots dx_{j-1} \right)^{1/q}.
\end{align*}
Focusing on the indicators, the right-hand side of this estimate can  be
bounded by
\begin{align*}
 &   A_q^{2} c_1^{1/q} \mathbf{1}_{\{ |x-z | \le t-r\}} \int_{t> t_2 >t_4 > \cdots >
t_{j-1}> r} dt_2
dt_4  \cdots  dt_{j-1}   (t-t_2)^{\frac3q-3}(t_2-t_4) ^{\frac3q-3}\\
& \quad \times \left( \int_{\R^{2(j-4)}}
 G^{2q}_{t_4-t_5}(x_4-x_5) \cdots G^{2q}_{t_{j-1} -r}(x_{j-1} -z)   dx_4
 \cdots dx_{j-1} \right)^{1/q}.
\end{align*}

For $j=4$, using \eqref{indicator}, we have
\begin{equation}
\label{eq-j4}
 \mathcal{T}_{4} \leq  A_q^{2} c_1^{1/q} (t-r)^{\frac{6}{q}-6}   \mathbf{1}_{\{ |x-z | \le t-r\}}   \leq C G_{t-r}^{2}(x-z).
\end{equation}

Now for $j\geq 5$, we just integrate in each of the variables $x_4, \dots,
x_{j-1}$ (with this order) so that, thanks to \eqref{p-norm-G}, we
end up with
\begin{align*}
 \mathcal{T}_{j}  & \leq  A_q^{2} c_1^{1/q} c_2^{j-4} \mathbf{1}_{\{ |x-z | \le t-r\}} \int_{t>
t_2 >t_4 > \cdots
> t_{j-1}> r} dt_2
dt_4  \cdots  dt_{j-1}  \\
& \qquad \times  (t-t_2)^{\frac3q-3}(t_2-t_4) ^{\frac3q-3}
(t_4-t_5)^{\frac2q-2}\cdots (t_{j-1}-r)^{\frac2q-2} \quad\text{with $c_2=\left(\dfrac{(2\pi)^{1-2q}}{2-2q}\right)^{2}$} \\
&\leq    A_q^{2} c_1^{1/q} c_2^{j-4} \frac{ (t-r)^{j-3} }{(j-3)!} (t-r+1)^{j(\frac{2}{q}-2)   }   \mathbf{1}_{\{ |x-z | \le t-r\}},
\end{align*}
where we used the rough estimate $a^\nu \leq (b+1)^{\nu}$ for  $0<a\leq b$ and $\nu>0$.  Thus, using \eqref{indicator}  we obtain:
\begin{equation} \label{eq-j5}
 \mathcal{T}_{j} \leq   \frac{C^{j-3}}{(j-3)!}G^2_{t-r}(x-z) \quad \text{for any}~ j\geq 5.
\end{equation}
Hence, combining the estimates \eqref{eq-j2},   \eqref{eq-j3}, \eqref{eq-j4}  and \eqref{eq-j5} and  taking into account that $ I^{\mathfrak{X}}_{0}(f^{(1)}_{t,x,1}(r,z;\bullet ) \big)=G_{r-s}(z-y)$, we can write
\begin{align} \notag
\Big\| I^{\mathfrak{X}}_{j-1}(f^{(j)}_{t,x,j}(r,z;\bullet ) \big) \Big\|_2^2
\leq
 \begin{cases}
C G_{t-r}^2(x-z) & \text{for $j=1,2,3,4$}\\
\dfrac{C^{j} }{(j-3)!} G_{t-r}^2(x-z) & \text{for $j\geq 5$}
\end{cases},
\end{align}
where the constant $C > 1$ depends on $(t,\gamma, q)$ and is increasing in $t$. For $1\leq j \leq n$, we obtain the following bound
\begin{align} \label{EST_d=2}
 \Big\| I^{\mathfrak{X}}_{j-1}(f^{(j)}_{t,x,j}(r,z;\bullet ) \big) \Big\|_2^2  \leq  \  \frac{C^{j}}{j!}n^3G_{t-r}^2(x-z).
\end{align}

\subsection{Step 3: Proof of \eqref{goalz}} \label{sec33}

  Let us first consider the lower bound in \eqref{goalz} for $d\in\{1,2\}$. For $p\in[2,\infty)$, we deduce from the modified isometry \eqref{miso} that
\[
\big\| D^m_{\pmb{s_m}, \pmb{y_m}} u(t,x) \big\|_p \geq \big\| D^m_{\pmb{s_m}, \pmb{y_m}} u(t,x) \big\|_2 \geq m! \widetilde{f}_{t,x,m}( \pmb{s_m}, \pmb{y_m}).
\]
Now let us establish the upper bound in \eqref{goalz}. By symmetry, we can assume $t>s_1 > \cdots > s_m >0$.
First we consider
the case where $d=2$.   Recall  the definition of $\mathcal{Q}_{m,n}$ from \eqref{Qmn},   and then plugging the estimates \eqref{EST_dj} and \eqref{EST_d=2} into \eqref{ineq_Qmn} yields, with $(i_0, s_0, y_0) = (0, t,x)$,
\begin{align*}
\mathcal{Q}_{m,n} &\leq \binom{n}{m}  \sum_{\pmb{i_m}\in\Delta_{n,m}}  \frac{C^{n-i_m}}{(n-i_m)!} \times  \prod_{j=1}^m \frac{n^3C^{  i_j - i_{j-1}}  }{ (i_j - i_{j-1} )! }  G^2_{s_{j-1} - s_j}(y_{j-1} - y_j)  \\
&\leq (2C)^n n^{3m}  \left( \sum_{\pmb{i_m}\in\Delta_{n,m}} \frac{1}{ i_1! (i_2-i_1)! \cdots (i_m - i_{m-1})! (n- i_m)! } \right) f^2_{t,x,m}(\pmb{s_m}, \pmb{y_m}),
\end{align*}
where we used the rough bound $\binom{n}{m} \leq 2^n$. The sum  in the above display is equal to
\[
 \frac{1}{n!} \sum_{\substack{a_1 + ... + a_{m+1} =n \\ a_i\in\mathbb{N},\forall i}}   \binom{n}{a_1, ..., a_{m+1}} = \frac{(m+1)^n}{n!},
\]
by multinomial formula. That is, we can get
\[
\mathcal{Q}_{m,n} \leq   \frac{\big[C(m+1) \big]^n n^{3m}}{n!}  f^2_{t,x,m}(\pmb{s_m}, \pmb{y_m}),
\]
which, together with  the estimate \eqref{b1}, implies the upper bound in \eqref{goalz}, when $d=2$.

The case $d=1$ can be done in the same way by noticing that the bound in \eqref{EST_d=1} can be replaced by $n\frac{C^j}{j!} G_{t-r}^2(x-z)$ for $1\leq j\leq n$. Then,  like the estimate for $d=2$, we can get,  for $t>s_1> \cdots>s_m>0$,
\[
\mathcal{Q}_{m,n} \leq   \frac{\big[ C(m+1) \big]^n n^{m}}{n!}  f^2_{t,x,m}(\pmb{s_m}, \pmb{y_m}),
\]
which together with the estimate \eqref{b1} implies the upper bound in \eqref{goalz}, when $d=1$.
This completes the proof of  the estimate \eqref{goalz}.

Notice that the upper bound also shows the convergence in $L^p$ for any $p\in [2,\infty)$ of the series \eqref{seriesm}, for any \emph{fixed} $\pmb{s_m}\in [0,t]^m$ and $\pmb{y_m}\in\bR^{dm}$.

\subsection{Step 4: Existence of a measurable version}  \label{sec34}
We claim that there is a random field $Y$ such that $Y(\pmb{s_m}, \pmb{y_m}) = D^m_{\pmb{s_m}, \pmb{y_m}}u(t,x)$ almost surely for almost all $(\pmb{s_m}, \pmb{y_m})\in [0,t]^m\times  \bR^{md}$ and  the mapping
 \[
  (\omega, \pmb{s_m}, \pmb{y_m})\in \Omega\times   [0,t]^m\times  \bR^{md} \longmapsto Y(\omega, \pmb{s_m}, \pmb{y_m})\in\bR
  \]
is jointly measurable. This fact is rather standard and we will sketch the proof only in the case $d=2$.  From the explicit form of the kernels
$f_{t,x,n}$ given in \eqref{eq:3}, it follows that the mapping
\begin{equation} \label{eq19}
(\pmb{s_m}, \pmb{y_m}) \rightarrow   \widetilde{f}_{t,x,n}(\pmb{s_m}, \pmb{y_m}; \bullet)
\end{equation}
is measurable from $[0,t]^m\times \R^{2m}$ to $L^2([0,t]^{n-m} ; L^{2q}(\R^{2(n-m)}))$. Because
\begin{center}
       $L^2([0,t]^{n-m} ; L^{2q}(\R^{2(n-m)}))$ is continuously embedded into $\cH^{\otimes (n-m)}$  (see  \eqref{white-ineq} and \eqref{q-ineq}),
\end{center}
 we deduce
 that  the map
\eqref{eq19} is measurable from $[0,t]^m\times \R^{2m}$  into $\cH^{\otimes (n-m)}$.  This implies that the mapping
\begin{equation} \label{eq19a}
(\pmb{s_m}, \pmb{y_m}) \rightarrow   I_{n-m}( \widetilde{f}_{t,x,n}(\pmb{s_m}, \pmb{y_m}; \bullet) )
\end{equation}
is measurable from  $[0,t]^m\times \R^{2m}$ to $L^2(\Omega)$.  The upper bound in  \eqref{goalz}  implies that
the mapping \eqref{eq19a} belongs to the space
\[
L^{2q} ( [0,t]^m \times \R^{2m} ; L^2(\Omega)) \subset L^{2q} (  [0,t]^m \times \R^{2m} \times \Omega).
\]
From this, it follows that we can find a measurable modification of the process
\[\{ I_{n-m}( \widetilde{f}_{t,x,n}(\pmb{s_m}, \pmb{y_m}; \bullet) )(\omega): (\omega, \pmb{s_m}, \pmb{y_m})\in  \Omega \times [0,t]^m \times \R^{2m} \}.
\]
Finally, by standard arguments we deduce the existence of a measurable modification of the series  \eqref{seriesm}.

 \subsection{Step 5: Proof of $u(t,x)\in \mathbb{D}^{\infty}$}  \label{sec35}

We have already seen in Remark  \ref{rem_Lp} that $u(t,x)\in L^p(\Omega)$ for any $p\in[2,\infty)$. Then, it remains to show
that the function   $D^m_{\pmb{s_m}, \pmb{y_m}}u(t,x)$ defined as the limit of the series \eqref{seriesm} coincides with the $m$th Malliavin
derivative of $u(t,x)$. To do this, it suffices to show that
$\bE\big[ \| D^mu(t,x)\|_{\cH^{\otimes m} }^p \big] <\infty$ for any $m\geq 1$.  By Fubini' theorem and using the upper bound \eqref{goalz}, we write
\noindent
\begin{align*}
&\Big( \bE\big[ \| D^mu(t,x)\|_{\cH^{\otimes m} }^p \big] \Big)^{2/p}  \\
&= \left\|  \int_{[0,t]^{2m}\times\bR^{2md}} d\pmb{s_m}  d\pmb{s'_m}  d\pmb{y_m}  d\pmb{y'_m}   \big( D^m_{\pmb{s_m},\pmb{y_m}   }u(t,x)\big) \big( D^m_{\pmb{s'_m},\pmb{y'_m}   }u(t,x)\big)  \prod_{j=1}^m\gamma_0(s_j-s_j') \gamma(y_j-y_j')      \right\|_{p/2} \\
&\leq   \int_{[0,t]^{2m}\times\bR^{2md}} d\pmb{s_m}  d\pmb{s'_m}  d\pmb{y_m}  d\pmb{y'_m}   \big\| D^m_{\pmb{s_m},\pmb{y_m}   }u(t,x)\big\|_p \big\| D^m_{\pmb{s'_m},\pmb{y'_m}   }u(t,x)\big\|_p  \prod_{j=1}^m\gamma_0(s_j-s_j') \gamma(y_j-y_j')       \\
&\lesssim  \big\| \widetilde{f}_{t,x,m} \big\|^2_{\cH^{\otimes m}} <\infty.
\end{align*}
 This shows  $u(t,x)\in \mathbb{D}^{\infty}$ and completes the proof of Theorem   \ref{MR1}.
 \qedhere

 \begin{remark} \label{rem34} {\rm
 When $d=2,p=2,m=1$ and for the cases (\texttt{a}), (\texttt{b}) in   Hypothesis ${\bf (H1)}$, the upper bound in \eqref{goalz} can be proved in a  much simpler way for almost all  $(r,z)\in[0,t]\times\bR^2$. Let $v_{\lambda}$ be the solution to  the stochastic  wave equation
\[
\begin{cases}
{\displaystyle \frac{\partial^2 v_{\lambda}}{\partial t^2}=\Delta v_{\lambda}+ \lambda v_{\lambda} \dot{\mathfrak{X}} } \\
v_\lambda(0,\bullet) =1, \quad  \dfrac{\partial v_\lambda}{\partial t} (0, \bullet) = 0,
\end{cases}
\]
where $\lambda>0$ and $\dot{\mathfrak{X}}$ is given as before. This solution has the chaos expansion $v_{\lambda}(t,x)=\sum_{n\geq 0} \lambda^{n}I_{n}^{\mathfrak{X}}(f_{t,x,n})$ and its Malliavin derivative has the chaos expansion
\[
D_{r,z}v_{\lambda}(t,x)=\sum_{n\geq 1} \lambda^{n}  I_{n-1}^{\mathfrak{X}}\left(\sum_{j=1}^{n} h_{t,x,n}^{(j)}(r,z;\bullet)\right);
\]
 see \eqref{series1} and \eqref{decomp-ftx}.        From this, we infer that  for any $(\lambda, t, x)\in (0,\infty)^2\times\bR^2$ and for \emph{almost every}  $(r,z)\in [0,t]\times\bR^2$,
\begin{equation}
\label{white-D}
\big\| D_{r,z}v_{\lambda}(t,x) \big\|_2^2 =\sum_{n\geq 1} (n-1)! \, \lambda^{2n} \Big\|\sum_{j=1}^{n}h_{t,x,n}^{(j)}(r,z;\bullet)\Big\|_{\cH_0^{\otimes (n-1)}}^2 \leq C_{\lambda,t,\gamma}G_{t-r}^2(x-z),
\end{equation}
where $C_{\lambda,t,\gamma}>0$ is a constant depending on $(\lambda, t, \gamma)$ and is increasing  in $t$.  The inequality above is due to Theorem 1.3 of \cite{NZ20} for case (\texttt{a}), respectively Theorem 1.2 of \cite{BNZ20}  for  case (\texttt{b}). Therefore,
\begin{align*}
\big\| D_{r,z}u(t,x) \big\|_2^2 &=\sum_{n\geq 1} (n-1)! \, \big\|\sum_{j=1}^{n}h_{t,x,n}^{(j)}(r,z;\bullet)\big\|_{\cH^{\otimes (n-1)}}^2\\
&  \leq
\sum_{n\geq 1} (n-1)! \, \Gamma_t^{n-1} \big\|\sum_{j=1}^{n}h_{t,x,n}^{(j)}(r,z;\bullet)\big\|_{\cH_0^{\otimes (n-1)}}^2 ~\text{by  \eqref{white-ineq}}.
\end{align*}
Thus,  using  \eqref{white-D} with $\lambda=\sqrt{\Gamma_t}$, we get  $\big\| D_{r,z}u(t,x) \big\|_2^2  \leq C_{\Gamma_t,t,\gamma}G_{t-r}^2(x-z)$.
}
\end{remark}

 \subsection{Consequences of Theorem \ref{MR1}}\label{sec36}

We will  establish two estimates that will be useful in Section \ref{sec5}.

\begin{corollary}
\label{D-norm}
Let $d=1,2$. Then,  for any finite  $T>0$,
\begin{equation}
\label{sup-D}
\sup_{(t,x)\in [0,T]\times \bR^d} \, \sup_{r\in [0,t]} \bE \Big[ \big\| |D_{r,\bullet} u(t,x)| \big\|_{0}^2 \Big] <\infty.
\end{equation}
  In particular,   $D_{r,\bullet}u(t,x)(\omega) \in |\cP_{0}|$ for almost every $(\omega,r) \in \Omega \times [0,t]$, where $|\cP_0|$ is defined in
  \eqref{|cP_0|}.
\end{corollary}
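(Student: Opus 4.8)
The plan is to obtain \eqref{sup-D} as an integrated, "absolute value" version of the $m=1$ case of Theorem \ref{MR1}. Since $Du(t,x)$ has a measurable modification (Theorem \ref{MR1}), the map $(z,z')\mapsto |D_{r,z}u(t,x)|\,|D_{r,z'}u(t,x)|$ is jointly measurable and nonnegative, so by Tonelli's theorem together with the definition \eqref{H0} of $\langle\cdot,\cdot\rangle_0$,
\[
\bE\Big[\big\||D_{r,\bullet}u(t,x)|\big\|_0^2\Big]=\int_{\bR^{2d}}\gamma(z-z')\,\bE\big[|D_{r,z}u(t,x)|\,|D_{r,z'}u(t,x)|\big]\,dz\,dz'.
\]
Applying the Cauchy--Schwarz inequality on $\Omega$ and then the upper bound in \eqref{goalz} with $m=1$, $p=2$ — whose constant depends only on $(t,\gamma_0,\gamma)$ and is nondecreasing in $t$, hence is bounded by some $K_T$ for all $t\in[0,T]$ — gives $\bE[|D_{r,z}u(t,x)|\,|D_{r,z'}u(t,x)|]\le\|D_{r,z}u(t,x)\|_2\,\|D_{r,z'}u(t,x)\|_2\le K_T^2\,G_{t-r}(x-z)\,G_{t-r}(x-z')$. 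Thus \eqref{sup-D} reduces to the deterministic estimate
\[
\sup_{0\le r\le t\le T}\ \sup_{x\in\bR^d}\ \int_{\bR^{2d}}\gamma(z-z')\,G_{t-r}(x-z)\,G_{t-r}(x-z')\,dz\,dz'<\infty,
\]
and by translation invariance of $\gamma$ we may set $x=0$.

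For $d=1$, I would use $G_{t-r}(z)=\tfrac12\mathbf{1}_{\{|z|<t-r\}}$ from \eqref{Green}: the integral becomes $\tfrac14\int_{|z|<t-r}\gamma\big((z-(t-r),z+(t-r))\big)\,dz\le\tfrac12(t-r)\,\gamma\big((-2T,2T)\big)$, which is finite since $\gamma$ is a (locally finite) Radon measure. For $d=2$, under Hypothesis $\bf (H1)$, the function $z\mapsto G_{t-r}(-z)$ is supported in the ball of radius $t-r\le T$ and belongs to $L^{2q}(\bR^2)$ with $q$ as in \eqref{def-q}/\eqref{def-qq} (because $2q<2$), so Lemma \ref{lem:embed} in cases (\texttt{a}), (\texttt{b}) and Lemma \ref{lem_mix} in case (\texttt{c}) yield
\[
\int_{\bR^4}\gamma(z-z')\,G_{t-r}(z)\,G_{t-r}(z')\,dz\,dz'\le(1+2T)^{\nu}D_\gamma\,\|G_{t-r}\|_{L^{2q}(\bR^2)}^2,
\]
with $\nu\ge0$ (and $\nu=0$ in cases (\texttt{a}), (\texttt{b})); by \eqref{p-norm-G}, $\|G_{t-r}\|_{L^{2q}(\bR^2)}^2=c_q(t-r)^{2(q^{-1}-1)}\le c_qT^{2(q^{-1}-1)}$ since $q<1$. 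Combining the two cases proves \eqref{sup-D}.

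The "in particular" part is then immediate: \eqref{sup-D} yields $\int_0^t\bE\big[\||D_{r,\bullet}u(t,x)|\|_0^2\big]\,dr<\infty$, hence $\||D_{r,\bullet}u(t,x)|\|_0^2<\infty$ for almost every $(\omega,r)\in\Omega\times[0,t]$, which is exactly the statement that $D_{r,\bullet}u(t,x)(\omega)\in|\cP_0|$ in the sense of \eqref{|cP_0|}. The only point that needs a little care — and it is not a genuine obstacle — is the uniformity of the constants in $x\in\bR^d$ and in $t\in[0,T]$: uniformity in $x$ follows from translation invariance of $\gamma$ and $G$, while uniformity in $t$ follows from the monotonicity of the constant in Theorem \ref{MR1} and from the fact that $\|G_{t-r}\|_{L^{2q}(\bR^2)}$ and $\gamma\big((-2(t-r),2(t-r))\big)$ are nondecreasing in $t-r$. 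In essence the whole corollary is the $p=2$, $m=1$ instance of Theorem \ref{MR1}, with the only extra work being the (elementary) verification that $G_{t-r}(x-\bullet)$ lies in $|\cP_0|$ uniformly.
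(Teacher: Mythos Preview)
Your proof is correct and follows the same opening moves as the paper: Tonelli to pass the expectation inside, Cauchy--Schwarz on $\Omega$, and then the pointwise bound $\|D_{r,z}u(t,x)\|_2\lesssim G_{t-r}(x-z)$ from Theorem~\ref{MR1}.

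The only genuine difference is in the final deterministic step, i.e.\ bounding
\[
\int_{\bR^{2d}}G_{t-r}(x-z)G_{t-r}(x-z')\gamma(z-z')\,dzdz'.
\]
The paper passes to Fourier variables via the Parseval-type relation \eqref{parseval}, rewrites this as $\int_{\bR^d}|\widehat{G}_{t-r}(\xi)|^2\mu(d\xi)$, and then invokes \eqref{bddFG} together with Dalang's condition \eqref{DC}; this handles $d=1$ and $d=2$ in a single line and uses only the standing assumption~(ii). You instead stay in physical space and argue case by case: for $d=1$ you exploit the explicit form of $G$ and local finiteness of $\gamma$, and for $d=2$ you use the $L^{2q}$ embedding of Lemmas~\ref{lem:embed}/\ref{lem_mix} under Hypothesis~${\bf (H1)}$ together with \eqref{p-norm-G}. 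Your route is perfectly valid in the paper's setting (since ${\bf (H1)}$ is assumed for $d=2$ throughout Section~\ref{sec3}), but the Fourier argument is slightly more economical: it is dimension-independent, avoids splitting into the three sub-cases of ${\bf (H1)}$, and uses only Dalang's condition rather than the stronger embedding hypothesis.
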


\begin{proof}
We work with a version of $\{ D_{r,z}u(t,x): (r,z)\in[0,t]\times\bR^2\}$ that is jointly measurable. By Fubini's theorem and Cauchy-Schwarz inequality, we have
\begin{align*}
 \bE \Big[ \big\| |D_{r,\bullet} u(t,x)| \big\|_{0}^2 \Big] & \leq \bE \int_{\bR^{2d}} |D_{r,z}u(t,x)||D_{r,z'}u(t,x)|\gamma(z-z')dzdz'  \\
 & \leq \int_{\bR^{2d}} \| D_{r,z}u(t,x)\|_2 \| D_{r,z'}u(t,x)\|_2 \gamma(z-z')dzdz'  \\
 &\leq C  \int_{\bR^{2d}} G_{t-r}(x-z)G_{t-r}(x-z')\gamma(z-z')dzdz'   \quad\text{by Theorem \ref{MR1}}\\
 &=  C  \int_{\bR^d}   \mu(d\xi) \big\vert  \widehat{G}_{t-r}(\xi) \big\vert^2 \quad \text{using Fourier transform}\\
 &\leq 2C  (t^2\vee 1) \int_{\bR^d}\frac{\mu(d\xi)}{1+|\xi|^2}~\text{by  \eqref{ineq1}},
\end{align*}
where    $C$ is a constant   depending on $\gamma_0,\gamma,t$ and is increasing in $t$. The above (uniform) bound   implies   \eqref{sup-D}.  Hence, $D_{r,\bullet}u(t,x)(\omega) \in |\cP_{0}|$ for almost all $(\omega,r) \in \Omega \times [0,t]$.
\end{proof}

 The space  $ |\cH \otimes \cP_0|$ appearing in the next corollary is defined as the set of measurable functions  $h:\R_+\times \R^{2d} \to \R$ such that
 \[
 \int_{\R_+^2\times \R^{4d}}   |h(r, w, z) | | h(r', w', z')|  \gamma_0(r-r') \gamma(w-w') \gamma(z-z') dw dw' dz dz' dr dr'<\infty.
\]
Then,  $ |\cH \otimes \cP_0| \subset \cH \otimes \cP_0$.
\begin{corollary}
\label{D2-norm}
Let $d=1,2$.
For almost all $(\omega,r) \in \Omega \times [0,t]$, $D D_{r,\bullet} u(t,x)(\omega) \in  |\cH \otimes \cP_0|$ and
for any  finite $T>0$,
\begin{align}\label{sup-DD}
\sup_{(t,x) \in [0,T] \times \bR^d}\sup_{r \in [0,t]} \bE \left( \Big\|   \big\vert  D D_{r,\bullet}u(t,x) \big\vert \Big\|_{\cH \otimes \cP_0}^2 \right)< +\infty.
\end{align}
\end{corollary}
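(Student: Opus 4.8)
The plan is to mirror the proof of Corollary \ref{D-norm}, routing everything through the $L^2(\Omega)$ upper bound of Theorem \ref{MR1} with $m=2$ together with a deterministic estimate of the same flavour as Lemma \ref{lem_ab}.

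\textbf{Reduction.} Work with the measurable modification of $D^2u(t,x)$ provided by Theorem \ref{MR1}. For a.e.\ $(\omega,r)$ the random element $DD_{r,\bullet}u(t,x)$ is the function $(r',w',z)\mapsto D_{r',w'}D_{r,z}u(t,x)=D^2_{(r,z),(r',w')}u(t,x)$, with $(t,x,r)$ fixed. By the definition of $|\cH\otimes\cP_0|$ recalled just before the statement, Tonelli's theorem and the Cauchy--Schwarz inequality in $L^2(\Omega)$,
\[
\bE\!\left[\Big\|\,\big|DD_{r,\bullet}u(t,x)\big|\,\Big\|_{\cH\otimes\cP_0}^{2}\right]\le \int \big\|D^2_{(r,z),(r',w')}u(t,x)\big\|_2\,\big\|D^2_{(r,z'),(s',y')}u(t,x)\big\|_2\,\gamma_0(r'-s')\,\gamma(w'-y')\,\gamma(z-z')\,dr'ds'\,dw'dy'\,dzdz',
\]
the integral running over $\R_+^2\times\R^{4d}$. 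Applying the upper bound in \eqref{goalz} (with $m=2$, $p=2$; its constant is increasing in $t$, hence $\le$ its value at $T$ whenever $t\le T$), it suffices to bound, uniformly over $(t,x)\in[0,T]\times\R^d$ and $r\in[0,t]$, the quantity
\[
\mathcal J(t,x,r):=\int \widetilde f_{t,x,2}(r,z,r',w')\,\widetilde f_{t,x,2}(r,z',s',y')\,\gamma_0(r'-s')\,\gamma(w'-y')\,\gamma(z-z')\,dr'ds'\,dw'dy'\,dzdz'.
\]

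\textbf{Symmetrization.} From \eqref{eq:3wt}, \eqref{eq:4} and the convention \eqref{rule1} that $G_t\equiv0$ for $t\le0$, one has the pointwise identity
\[
2\,\widetilde f_{t,x,2}(r,z,r',w')=f^{(1)}_{t,x,2}(r,z;r',w')+f^{(2)}_{t,x,2}(r,z;r',w')=G_{t-r}(x-z)G_{r-r'}(z-w')+G_{t-r'}(x-w')G_{r'-r}(w'-z),
\]
which automatically encodes the time constraints ($r'<r$ in the first term, $r'>r$ in the second). Since the bilinear form underlying the $|\cH\otimes\cP_0|$-norm is nonnegative definite and the kernels involved are nonnegative, it is enough to control $\mathcal J_1$ and $\mathcal J_2$, where $\mathcal J_i$ is obtained from $\mathcal J(t,x,r)$ by replacing both copies of $\widetilde f_{t,x,2}$ by $f^{(i)}_{t,x,2}(r,\cdot\,;\cdot)$.

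\textbf{Deterministic estimates.} For $\mathcal J_1$ I integrate the ``outer'' spatial variables $(w',y')$ first: by Tonelli and the Parseval relation \eqref{parseval}, for fixed $(z,z',r',s')$,
\[
\int_{\R^{2d}}G_{r-r'}(z-w')\,\gamma(w'-y')\,G_{r-s'}(z'-y')\,dw'dy'=\int_{\R^d}e^{-i\xi\cdot(z-z')}\widehat G_{r-r'}(\xi)\widehat G_{r-s'}(\xi)\,\mu(d\xi)\le\int_{\R^d}\big|\widehat G_{r-r'}(\xi)\big|\,\big|\widehat G_{r-s'}(\xi)\big|\,\mu(d\xi),
\]
a bound independent of $(z,z')$; combining it with $\int_{\R^{2d}}G_{t-r}(x-z)G_{t-r}(x-z')\gamma(z-z')\,dzdz'=\int_{\R^d}|\widehat G_{t-r}(\xi)|^2\mu(d\xi)$ (again \eqref{parseval}), the inequality $2|ab|\le a^2+b^2$, the symmetry $\gamma_0(r'-s')=\gamma_0(s'-r')$, the estimate \eqref{bddFG}, Dalang's condition \eqref{DC}, and $\int_0^r\gamma_0(r'-s')\,ds'\le\Gamma_t$, one gets
\[
\mathcal J_1\le\left(2(t^2\vee1)\int_{\R^d}\frac{\mu(d\xi)}{1+|\xi|^2}\right)^{2}t\,\Gamma_t,\qquad \Gamma_t=\int_{-t}^{t}\gamma_0(a)\,da<\infty.
\]
For $\mathcal J_2$ one argues in the same way, integrating $(z,z')$ first and then $(w',y')$, and reaches the same bound. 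As $x$ has disappeared (translation invariance) and $\Gamma_t\le\Gamma_T$, $(t^2\vee1)\le(T^2\vee1)$ for $t\le T$, this gives \eqref{sup-DD}; in particular $\big\|\,|DD_{r,\bullet}u(t,x)|\,\big\|_{\cH\otimes\cP_0}<\infty$ a.s.\ for a.e.\ $r\in[0,t]$, i.e.\ $DD_{r,\bullet}u(t,x)(\omega)\in|\cH\otimes\cP_0|$ for a.e.\ $(\omega,r)\in\Omega\times[0,t]$, exactly as at the end of the proof of Corollary \ref{D-norm}.

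\textbf{Main obstacle.} The substantive step is the deterministic estimate of $\mathcal J_1,\mathcal J_2$: the four spatial variables are coupled through $\gamma(z-z')$, $\gamma(w'-y')$ and the two Green's factors, so one must integrate them in the right order so as to discard the oscillatory factor $e^{-i\xi\cdot(z-z')}$ and reduce the remaining time integrals to $\Gamma_t$ and $\int(1+|\xi|^2)^{-1}\mu(d\xi)$. The symmetrization identity of the second step is precisely what keeps this bookkeeping — in particular the support constraints coming from $G_t\equiv0$ for $t\le0$ — transparent.
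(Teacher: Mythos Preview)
Your proof is correct. The reduction step via Cauchy--Schwarz and \eqref{goalz} is identical to the paper's; the difference lies in how the resulting deterministic integral $\mathcal J(t,x,r)$ is handled.

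The paper does not split $\widetilde f_{t,x,2}$. Instead it applies Cauchy--Schwarz in $\cH$ with respect to the $(\theta,w)$-variables to obtain
\[
\mathcal J(t,x,r)\le\int_{\bR^{2d}}\big\|\widetilde f_{t,x,2}(r,z;\bullet)\big\|_{\cH}\,\big\|\widetilde f_{t,x,2}(r,z';\bullet)\big\|_{\cH}\,\gamma(z-z')\,dzdz',
\]
and then invokes the machinery from the proof of Theorem~\ref{MR1} (the estimates \eqref{EST_d=1}, \eqref{EST_d=2} of Section~\ref{sec32}) to conclude $\|\widetilde f_{t,x,2}(r,z;\bullet)\|_{\cH}\le C\,G_{t-r}(x-z)$, thereby reducing to the exact integral already treated in Corollary~\ref{D-norm}. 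Your route instead decomposes $\widetilde f_{t,x,2}$ into the two pieces $f^{(1)},f^{(2)}$, orders the spatial integrations so that one pair produces the oscillatory Parseval identity and the other pair decouples, and bounds everything using only \eqref{bddFG} and Dalang's condition. This is more hands-on but also more self-contained: you never need to re-enter the $L^{2q}$ analysis underlying \eqref{EST_d=2} in the two-dimensional case. The paper's approach, by contrast, is more modular---it recognizes the bound on $\|\widetilde f_{t,x,2}(r,z;\bullet)\|_{\cH}$ as a special case of what was already established and recycles it. Either argument is fine.
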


\begin{proof} Using Theorem \ref{MR1}, Cauchy-Schwarz inequality  and the estimate \eqref{goalz}, we can write
\begin{align*}
&\bE \left( \Big\|   \big\vert  D D_{r,\bullet} u(t,x) \big\vert \Big\|_{\cH \otimes \cP_0}^2 \right)  = \bE  \Bigg(\int_{[0,t]^2} \int_{\bR^{4d}}
|D_{(\theta,w),(r,z)}^2u(t,x)| |D_{(\theta',w'),(r,z')}^2u(t,x)|\\
& \qquad \qquad\qquad\qquad\qquad \qquad\qquad \times \gamma_0(\theta-\theta')\gamma(w-w') \gamma(z-z')dw dw'dzdz'd\theta d\theta' \Bigg) \\
&\leq \int_{[0,t]^2} \int_{\bR^{4d}}
\big\|D_{(\theta,w),(r,z)}^2u(t,x) \big\|_2  \big\|D_{(\theta',w'),(r,z')}^2u(t,x) \big\|_2\\
& \qquad\qquad\qquad\quad \quad \times \gamma_0(\theta-\theta')\gamma(w-w') \gamma(z-z')dw dw'dzdz'd\theta d\theta'  \\
&\leq  C\int_{[0,t]^2} \int_{\bR^{4d}}
\widetilde{f}_{t,x,2}(r,z,\theta, w) \widetilde{f}_{t,x,2}(r,z',\theta', w') \gamma_0(\theta-\theta')\gamma(w-w') \gamma(z-z')dw dw'dzdz'd\theta d\theta'.
\end{align*}
As a consequence,
\begin{align*}
\bE \left( \Big\|   \big\vert  D D_{r,\bullet} u(t,x) \big\vert \Big\|_{\cH \otimes \cP_0}^2 \right) &
\le C   \int_{\R^{2d} } \| \widetilde{f}_{t,x,2}(r,z;\bullet) \|_{\cH} \| \widetilde{f}_{t,x,2}(r,z';\bullet) \|_{\cH}\gamma(z-z') dzdz'.
\end{align*}
By the arguments used in the proof of Theorem \ref{MR1},     it follows that
\[
 \| \widetilde{f}_{t,x,2}(r,z;\bullet) \|_{\cH}  \le C G_{t-r}(x-z).
 \]
 Therefore,
\[
\bE \left( \Big\|   \big\vert  D D_{r,\bullet} u(t,x) \big\vert \Big\|_{\cH \otimes \cP_0}^2 \right) \leq C \int_{\bR^{2d}} \gamma(z-z') G_{t-r}(x-z)G_{t-r}(x-z')dzdz'
\]
and the same argument as in the proof of Corollary \ref{D-norm} ends our proof.
\end{proof}

\begin{remark}
{\rm Note that for any finite $T>0$,   $\bE \big(\big\| \vert D^2 u(t,x)\vert \big\|_{\cH^{\otimes 2}}^2\big) < \infty$ for any $(t,x) \in [0,T] \times \bR^d$.}
\end{remark}

\section{Gaussian fluctuation: Proof of Theorem \ref{MR2}} \label{sec4}

Recall  that
\[
F_R(t)= \int_{B_R} \big[ u(t,x) -1 \big] dx
\]
and $\sigma_R(t)  =  \sqrt{\text{Var}\big( F_R(t) \big)  }$. First, we need to obtain the limiting covariance structure, which is the content of Proposition \ref{PROP_COV}. It will give us the growth order of  $\sigma_R(t)$. Then, in Section \ref{sec42}, we apply the second-order Gaussian Poincar\'e inequality to establish the quantitative CLT for $F_R(t)/\sigma_R(t)$. Finally, we will prove the functional CLT by showing the  convergence of the  finite-dimensional distributions and the tightness.

\subsection{Limiting covariance}  \label{sec41}

\begin{proposition} \label{PROP_COV}
{\rm
Let $u$ denote the solution to the hyperbolic Anderson model \eqref{wave} and assume that the non-degeneracy condition \eqref{NDC} holds.  Then, the following results hold true:

(1) Suppose $d \in\{1,2\}$ and  $\gamma(\bR^d) \in(0, \infty)$. Then, for any $t,s\in(0,\infty)$,
\begin{align}\label{COV(1)}
\lim_{R\to\infty} R^{-d} \bE\big[ F_R(t) F_R(s) \big] = \omega_d \sum_{p\geq 1} p! \int_{\bR^d} \big\langle \widetilde{f}_{t,x,p}, \widetilde{f}_{s,0,p} \big\rangle_{\cH^{\otimes p}}dx,
\end{align}
see also \eqref{COV:G}. In particular, $\sigma_R(t) \sim R^{d/2}$.

\medskip

(2) Suppose $d \in \{1,2\}$ and $\gamma(x) = |x|^{-\beta}$ for some $\beta\in(0, 2\wedge d)$. Then, for any $t,s\in(0,\infty)$,
\begin{align}\label{COV(2)}
\lim_{R\to\infty} R^{\beta-2d} \bE\big[ F_R(t) F_R(s) \big] = \kappa_{\beta, d} \int_0^t dr\int_0^s dr' \gamma_0(r-r') (t-r)(s-r'),
\end{align}
where $\kappa_{\beta, d} = \int_{B_1^2} dxdy | x- y |^{-\beta}$ is introduced in \eqref{KBD}.  In particular, $\sigma_R(t) \sim R^{d- \frac{\beta}{2}}$.

\medskip

(3) Suppose $d=2$ and $\gamma(x_1,x_2) =\gamma_1(x_1)\gamma_2(x_2)$ satisfies one of the following conditions:
\begin{align}\label{cases413}
\begin{cases}
(c_1)   & \gamma_i(x_i) = |x_i|^{-\beta_i}~\text{for some $\beta_i\in(0,1)$, $i=1,2$;} \\
\quad\\
(c_2)  & \gamma_1\in L^1(\bR) ~{\rm and}~ \gamma_2(x) = |x|^{-\beta}~\text{for some $\beta\in(0,1)$}
\end{cases}\,\, .
\end{align}
For any $s,t\in(0,\infty)$, the following results hold true:
 \begin{enumerate}
 \item[$(r_1)$]   In $(c_1)$, we have
\begin{align}
\lim_{R\to \infty} R^{\beta_1-\beta_2-4} \bE\big[ F_R(t) F_R(s) \big] &=  K_{\beta_1, \beta_2} \int_0^t dr\int_0^s dr' \gamma_0(r-r') (t-r)(s-r'),  \label{COV(c2)}
\end{align}
where $K_{\beta_1, \beta_2}$ is defined in \eqref{Kbeta12}.

\item[$(r_2)$] In $(c_2)$, we have
\begin{align}\label{COV(c3)}
\lim_{R\to \infty} R^{\beta-3}  \bE\big[ F_R(t) F_R(s) \big]  = \gamma_1(\bR) \mathcal{L}_\beta   \int_0^t dr\int_0^s dr' \gamma_0(r-r') (t-r)(s-r'),
\end{align}
where $\mathcal{L}_\beta$ is defined in \eqref{def_L1B}.

\end{enumerate}

}
\end{proposition}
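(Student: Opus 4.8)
The plan is to compute $\bE[F_R(t)F_R(s)]$ via the Wiener chaos expansion and identify, in each of the three regimes, which chaos dominates as $R\to\infty$. Writing $u(t,x)-1=\sum_{p\ge 1}I_p(\widetilde f_{t,x,p})$ and using the isometry \eqref{miso} together with Fubini, we have
\[
\bE\big[F_R(t)F_R(s)\big]=\sum_{p\ge 1}p!\int_{B_R}\int_{B_R}\big\langle\widetilde f_{t,x,p},\widetilde f_{s,y,p}\big\rangle_{\cH^{\otimes p}}\,dx\,dy.
\]
By spatial stationarity the inner product depends only on $x-y$, so after a change of variables the double integral over $B_R\times B_R$ becomes $\int_{\bR^d}\big(\int_{B_R}\mathbf 1_{B_R}(x-z)\,dx\big)\big\langle\widetilde f_{t,0,p},\widetilde f_{s,z,p}\big\rangle_{\cH^{\otimes p}}\,dz$, and the volume factor behaves like $\omega_d R^d$ times a quantity converging to $1$ on compacts, by dominated convergence, provided $\int_{\bR^d}\big\|\widetilde f_{t,0,p}(\bullet)\big\|_{\cH}\,\cdots$ is summable in $p$. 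This is exactly where case (1) lands: one shows $\sum_p p!\int_{\bR^d}\langle\widetilde f_{t,x,p},\widetilde f_{s,0,p}\rangle_{\cH^{\otimes p}}\,dx<\infty$ using the $L^2$-reduction \eqref{white-ineq}, the Fourier bound \eqref{bddFG}, Lemma~\ref{lem_4Qp} and the finiteness of $\gamma(\bR^d)$, so the limit \eqref{COV(1)} follows and $\sigma_R(t)^2\sim R^d$.

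For cases (2) and (3) the total mass is infinite, so the normalization is different and, crucially, only the first chaos $p=1$ survives in the limit. Concretely, $\int_{B_R^2}\langle\widetilde f_{t,x,1},\widetilde f_{s,y,1}\rangle_{\cH}\,dx\,dy=\int_{B_R^2}dx\,dy\int_0^t dr\int_0^s dr'\,\gamma_0(r-r')\langle G_{t-r}(x-\bullet),G_{s-r'}(y-\bullet)\rangle_0$. Using \eqref{parseval} and $\cF G_u(\xi)=\widehat G_u(\xi)$, plus the exact self-similarity scaling of the Riesz kernel $\gamma(\lambda x)=\lambda^{-\beta}\gamma(x)$ (resp.\ the product structure in case (3)), one rescales space by $R$ and shows that the $p=1$ term, after multiplying by $R^{\beta-2d}$ (resp.\ $R^{\beta_1-\beta_2-4}$ or $R^{\beta-3}$), converges to $\kappa_{\beta,d}\int_0^t\int_0^s\gamma_0(r-r')(t-r)(s-r')\,dr\,dr'$ (resp.\ the analogues with $K_{\beta_1,\beta_2}$, $\mathcal L_\beta$). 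Here the key computation is that $\int_{B_R^2}G_{t-r}(x-z)G_{s-r'}(y-z')\gamma(z-z')\,dx\,dy\,dz\,dz'$, after scaling $x=Rx'$ etc., concentrates the $G$-kernels near the origin while the $\gamma$-kernel contributes the scaling exponent and, in the limit, the indicator integral $\kappa_{\beta,d}$; the remaining $(t-r)(s-r')$ factor comes from $\int_{\bR^d}G_{t-r}(z)\,dz=(t-r)$ (using $\widehat G_u(0)=u$). One must also verify the higher chaoses $p\ge 2$ are negligible: bound $p!\int_{B_R^2}\langle\widetilde f_{t,x,p},\widetilde f_{s,y,p}\rangle\,dx\,dy$ using \eqref{white-ineq}, Lemma~\ref{lem:1}/\eqref{q-ineq}, Lemma~\ref{lem33BNZ}, Lemma~\ref{lem_4Qp} and the Riesz scaling to show it is $O(R^{2d-\beta})$ with a strictly smaller exponent than the $p=1$ term only after dividing—wait, the point is the $p\ge 2$ terms scale like $R^{2d-2\beta+\varepsilon_p}$ or carry extra convolution factors producing lower-order growth, hence vanish after the $R^{\beta-2d}$ normalization; summability in $p$ is controlled by the factorial decay $C^p/p!$ from \eqref{EST_dj}.

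Finally, the positivity of the limiting constants is immediate: $\kappa_{\beta,d}>0$, $K_{\beta_1,\beta_2}>0$, $\mathcal L_\beta>0$ are positive integrals, and $\int_0^t\int_0^s\gamma_0(r-r')(t-r)(s-r')\,dr\,dr'>0$ follows from the non-degeneracy assumption \eqref{NDC} together with $\gamma_0\ge 0$ and non-negative definiteness — one restricts to $r,r'$ in a small interval near $0$ where $(t-r)(s-r')$ is bounded below, and uses $\|\gamma_0\|_{L^1([0,\e])}>0$. Taking $t=s$ then gives $\sigma_R(t)\sim R^{d-\beta/2}$, $R^{2-(\beta_1+\beta_2)/2}$, $R^{(3-\beta)/2}$ respectively. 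The main obstacle will be the careful bookkeeping in the dominated-convergence and scaling arguments for $p\ge 2$: one needs uniform-in-$R$ integrable majorants for the rescaled kernels, which requires combining the space-time decoupling \eqref{white-ineq} with the $L^{2q}$-embedding estimates (Lemmas~\ref{lem:embed}, \ref{lem_mix}, \ref{lem:1}) and the convolution bound of Lemma~\ref{lem33BNZ} in precisely the right order, exactly as in the proof of Theorem~\ref{MR1}; the case (3) with mixed kernels is the most delicate since one must track the two coordinate directions separately and the product $\gamma_1\gamma_2$ scales anisotropically.
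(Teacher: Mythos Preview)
Your overall architecture is right: chaos expansion, spatial stationarity, and (in parts (2)--(3)) dominance of the first chaos. Part (1) is essentially what the paper does, and your sketch is adequate there.

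The genuine gap is in your treatment of the higher chaoses $p\ge 2$ in parts (2) and (3). You suggest these terms ``scale like $R^{2d-2\beta+\varepsilon_p}$'' or have ``lower-order growth,'' and you propose to control them with the $L^{2q}$ embedding (Lemmas~\ref{lem:embed}, \ref{lem_mix}, \ref{lem:1}), the white-noise reduction \eqref{white-ineq}, and Lemma~\ref{lem33BNZ}. This is not the right mechanism and not the right toolkit. In fact the $p\ge 2$ contributions have the \emph{same} scaling exponent $R^{2d-\beta}$ as the first chaos; what makes them negligible is that the \emph{coefficient} tends to zero. The paper works entirely in Fourier variables: one applies Lemma~\ref{lem_ab} to obtain a nonnegative integrand containing the factor $\ell_R(\xi_1+\cdots+\xi_p)=|\cF\mathbf 1_{B_R}|^2(\xi_1+\cdots+\xi_p)$, changes variables $\eta_j=\xi_p+\cdots+\xi_j$ and $\eta_1\to\eta_1/R$ to extract $R^{2d-\beta}$, and then observes that the leftover spatial integral
\[
\int_{B_1^2} |x-x'|^{-\beta}\,e^{-i(x-x')\cdot\eta_2 R}\,dx\,dx'
\]
is uniformly bounded by $\kappa_{\beta,d}$ and tends to $0$ as $R\to\infty$ for every $\eta_2\neq 0$ by the Riemann--Lebesgue lemma. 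Summability over $p$ is then controlled by the quantity $Q_{p-1}$ (see \eqref{def-Qp}) via Lemma~\ref{lem_4Qp}, yielding the dominated-convergence majorant. The tools you cite (Lemma~\ref{lem33BNZ} and the $L^{2q}$ machinery) are the ones used in the proof of Theorem~\ref{MR1} for pointwise Malliavin derivative bounds; they do not naturally produce the oscillatory cancellation needed here, and a straightforward $L^{2q}$ bound would give at best $O(R^{2d})$, which is too crude. In case $(c_2)$ the same Riemann--Lebesgue argument is run in the second spatial coordinate only, while the boundedness of the spectral density $\varphi_1$ (from $\gamma_1\in L^1(\bR)$) handles the first coordinate; your remark that one must ``track the two coordinate directions separately'' is correct, but the actual work is again Fourier-analytic rather than via the embedding lemmas.
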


 \subsubsection{Proof of part (1) in Proposition \ref{PROP_COV}}       \label{sec411}

\paragraph{Preparation.}
In the following, we will denote by $\varphi$ the density of $\mu$.
 For $0< s\leq t < \infty$ and $x,y\in\bR^d$, we have
\begin{align*}
\bE\big[ u(t,x) u(s,y)\big]-1 &=\sum_{p\geq 1} p! \big\langle  \widetilde{f}_{t,x,p},   \widetilde{f}_{s,y,p}\big\rangle_{\mathcal{H}^{\otimes p}} \\
&=: \sum_{p\geq 1} \frac{1}{p!} \Phi_p(t,s; x-y),
\end{align*}
where $ \widetilde{f}_{t,x,p}\in\mathcal{H}^{\otimes p}$ is defined as in \eqref{eq:3}-\eqref{eq:3wt} and $\Phi_p(t,s; x-y)$, defined in the obvious manner, depends only on the difference $x-y$. To see this dependency and to prepare for the future computations, we rewrite $\Phi_p(t,s; x-y)$ using Fourier transform in space:
\begin{align}
& \Phi_p(t,s; x-y) = (p!)^2  \big\langle  f_{t,x,p},   \widetilde{f}_{s,y,p}\big\rangle_{\mathcal{H}^{\otimes p}}   \notag \\
&= p! \sum_{\sigma\in\mathfrak{S}_p}   \int_{\Delta_p(t)} d\pmb{s_p} \int_{[0,s]^p} d\pmb{\tilde{s}_p} \left( \prod_{j=1}^p \gamma_0(s_j -  \tilde{s}_j  ) \right) \int_{\bR^{2pd}} d\pmb{y_p} d\pmb{\tilde{y}_p} \left(\prod_{j=1}^p\gamma(y_j - \tilde{y}_j)  \right)  \notag\\
&\qquad \times  \left( \prod_{j=0}^{p-1} G_{s_{j} - s_{j+1}}(y_j - y_{j+1}) \right) \left( \prod_{j=0}^{p-1} G_{\tilde{s}_{\sigma(j)} - \tilde{s}_{\sigma(j+1)}}( \widetilde{y}_{\sigma(j)} - \widetilde{y}_{\sigma(j+1)}) \right)  \label{neg}  \\
&= p! \sum_{\sigma\in\mathfrak{S}_p}   \int_{\Delta_p(t)} d\pmb{s_p} \int_{[0,s]^p} d\pmb{\tilde{s}_p} \left( \prod_{j=1}^p \gamma_0(s_j -  \tilde{s}_{j}  ) \right) \int_{\bR^{pd}}  d  \pmb{\xi_p}  \left( \prod_{j=1}^p \varphi(\xi_j) \right)   e^{-i (x-y)\cdot (\xi_1+\cdots+ \xi_p)} \notag\\
&\qquad \times  \left( \prod_{j=0}^{p-1} \widehat{G}_{s_{j} - s_{j+1}}( \xi_p+\cdots+ \xi_{j+1} ) \right) \left( \prod_{j=0}^{p-1} \widehat{G}_{\tilde{s}_{\sigma(j)} - \tilde{s}_{\sigma(j+1)}}(\xi_{\sigma(p)}+ \cdots+  \xi_{\sigma(j+1)}) \right),  \label{FT-Phi}
\end{align}
where $\Delta_p(t) =\{ \pmb{s_p}: t> s_1> \cdots > s_p>0\}$, $(s_0, y_0, \tilde{s}_{\sigma(0)}, \tilde{y}_{\sigma(0)}) = (t,x,s,y)$, $\widehat{G}_t(\xi) = \frac{\sin(t |\xi| )}{| \xi|}$ is introduced in \eqref{FG} and we have used again the convention $G_t(z)=0$ for $t\leq 0$.

Relation \eqref{neg} shows that $ \Phi_p(t,s; x-y)$ is always nonnegative and  equality \eqref{FT-Phi} indicates that  $\Phi_p(t,s; x-y)$ indeed depends only on the difference $x-y$, so that we can write
\begin{align}\label{PHI-p}
\Phi_p(t,s; z) = (p!)^2 \big\langle  \widetilde{f}_{t,z,p},   \widetilde{f}_{s,0,p}\big\rangle_{\mathcal{H}^{\otimes p}}.
\end{align}
Note that  $\Phi_p(t,t; 0)$ coincides with $\alpha_p(t)$
given in \cite[Equation (4.11)]{BS17}.
Moreover,  applying Lemma \ref{lem_ab} with     $\mu_p(d\pmb {\xi_p}) = \varphi(\xi_1) \cdots \varphi(\xi_p) d\xi_1 \cdots  d\xi_p$ and
$
g(s_1,\xi_1, \dots , s_p,\xi_p) =  \prod_{j=0}^{p-1}  \vert \widehat{G}_{s_{j} - s_{j+1}}( \xi_p+\cdots+ \xi_{j+1} )  \vert,
$
we get (with $s\leq t$)
\begin{align}
&\Phi_p(t,s; z)   \leq \Gamma_t^p  p! \int_{\Delta_p(t)} d\pmb{s_p} \int_{\bR^{pd}}  \mu(d\pmb{\xi_p} )     \prod_{j=0}^{p-1} \Big\vert \widehat{G}_{s_{j} - s_{j+1}}( \xi_p+\cdots+ \xi_{j+1} ) \Big\vert^2, \label{fini:Phi}
\end{align}
  where we recall that $\Gamma_t = \int_{-t}^t \gamma_0(a)da$ and point out that the right-hand side of  \eqref{fini:Phi} is finite by applying Lemma \ref{lem_4Qp} with $z_j = \xi_{j+1}+\cdots +\xi_p$ and $z_p=0$.

 \bigskip

Now we are ready to show \eqref{COV(1)}.

\begin{proof}[Proof of   \eqref{COV(1)}]     Let us begin with
\begin{align*}
\frac{\bE\big[ F_R(t) F_R(s)\big]}{R^d} & = \int_{B_R^2} dx dy \frac{\bE\big[ u(t,x) u(s,y)\big]-1}{R^d} = \sum_{p\geq 1} \frac{\omega_d }{p!} \int_{\bR^d} \frac{\text{Leb}\big( B_R\cap B_R(-z) \big)   }{\text{Leb}( B_R )} \Phi_p(t,s;z)dz,
\end{align*}
where $\omega_1=2$, $\omega_2=\pi$ and $\text{Leb}(A)$ stands for the Lebesgue measure of $A\subset \bR^d$. We claim that
\begin{align}\label{claim1:sec41}
\sum_{p\geq 1} \frac{1}{p!} \int_{\bR^d} \Phi_p(t,s;z)dz < \infty,
\end{align}
from which and the dominated convergence theorem we can deduce that
\begin{align}\label{COV:C1C4}
\lim_{R\to\infty} R^{-d}\bE\big[ F_R(t) F_R(s)\big] = \omega_d \sum_{p\geq 1} \frac{1}{p!} \int_{\bR^d}  \Phi_p(t,s;z)dz.
\end{align}
We remark that, by the monotone convergence theorem and the fact that $\Phi_p(t,s;z)\geq 0$ for all $z\in\bR^d$,
the claim \eqref{claim1:sec41} is equivalent to
\begin{align}\label{claim2:sec41}
\sup_{\e>0} \sum_{p\geq 1} \frac{1}{p!} \int_{\bR^d} \Phi_p(t,s;z) e^{-\frac{\e}{2} |z|^2}dz < \infty.
\end{align}
Let us show the claim \eqref{claim2:sec41}.
\medskip

For $p=1$, by direct computations, we can perform integration with respect to $z, y, \tilde{y} $ (one by one in this order) to obtain
\begin{align}
 \int_{\bR^d}  \Phi_1(t,s;z)dz & =  \int_{\bR^d} \left(  \int_0^t dr \int_0^s d\tilde{r} \gamma_0(r-\tilde{r}) \int_{\bR^{2d}} dyd\tilde{y} G_{t-r}(y-z)G_{s-\tilde{r}}(\tilde{y}) \gamma(y - \tilde{y} )  \right)    dz  \notag \\
  &=\gamma(\bR^d) \int_0^t  \int_0^s    \gamma_0(r-\tilde{r})  (t-r)   (s-\tilde{r} )d\tilde{r} dr  \leq \gamma(\bR^d) t^3 \Gamma_t, \label{p=1}
\end{align}
where $ \int_{\bR^d}  \Phi_1(t,s;z)dz >0$    due to the non-degeneracy assumption \eqref{NDC} on   $\gamma_0$. This implies in particular that $\sigma_R(t) > 0$ for large enough $R$.
\medskip

Next we consider $p\geq 2$.   Using the expression \eqref{FT-Phi} and applying Fubini's theorem with the dominance condition \eqref{fini:Phi}, we can write
\begin{align}
&\mathcal{T}_{p,\e}:=(2\pi)^{-d} \int_{\bR^d} \Phi_p(t,s;z) e^{-\frac{\e}{2} |z|^2}dz =  p! \sum_{\sigma\in\mathfrak{S}_p}  \int_{\Delta_p(t)} d\pmb{s_p} \int_{[0,s]^p} d\pmb{\tilde{s}_p}  \prod_{j=1}^p \gamma_0(s_j -  \tilde{s}_{j}  )  \int_{\bR^{pd}}  d\pmb{\xi_p}    \notag  \\
&\quad  \times    p_\e(\xi_1+\cdots + \xi_p)   \prod_{j=0}^{p-1} \varphi(\xi_{j+1}) \widehat{G}_{s_{j} - s_{j+1}}( \xi_p+\cdots+ \xi_{j+1} )   \widehat{G}_{\tilde{s}_{\sigma(j)} - \tilde{s}_{\sigma(j+1)}}(\xi_{\sigma(p)}+ \cdots+  \xi_{\sigma(j+1)})   \notag \\
&\leq \Gamma_t^p p!  \int_{\Delta_p(t)} d\pmb{s_p} \int_{\bR^{pd}}  d\pmb{\xi_p} \left(\prod_{j=1}^p\varphi(\xi_j) \right)  p_\e\left(\sum_{j=1}^p\xi_j\right)    \prod_{j=0}^{p-1} \Big\vert \widehat{G}_{s_{j} - s_{j+1}}( \xi_p+\cdots+ \xi_{j+1} ) \Big\vert^2,  \label{CE1}
\end{align}
where $p_\e(\xi) =(2\pi \e)^{-d/2} e^{-|\xi|^2/(2\e)} $ for $\xi\in\bR^d$  and we applied Lemma \ref{lem_ab} with $ \mu_p(d\pmb{\xi_p}) =\varphi(\xi_1)\cdots \varphi(\xi_p) p_\e(\xi_1+\cdots+\xi_p) d\xi_1 \cdots d\xi_p$.

 Next, we make the  change of variables
\[
\eta_j= \xi_p +\cdots + \xi_j~\text{with the convention $\eta_{p+1}=0$},
\]
and the bound \eqref{CE1} becomes
\begin{align}
\mathcal{T}_{p,\e} &\leq  \Gamma_t^p p!  \int_{\Delta_p(t)} d\pmb{s_p} \int_{\bR^{pd}}  d\pmb{\eta_p} \left(\prod_{j=1}^p\varphi(\eta_j -\eta_{j+1}) \right)  p_\e(\eta_1)    \prod_{j=0}^{p-1} \Big\vert \widehat{G}_{s_{j} - s_{j+1}}( \eta_{j+1} ) \Big\vert^2 \notag \\
&\leq  \Gamma_t^p p!  \| \varphi\|_\infty t^2 \int_{\bR^d} d\eta_1 p_\e(\eta_1)  \int_{\Delta_p(t)} d\pmb{s_p} \int_{\bR^{pd-d}}  d\eta_2 \cdots d\eta_p\left(\prod_{j=2}^p\varphi(\eta_j -\eta_{j+1}) \right) \notag   \\
& \quad\times \Big\vert \widehat{G}_{s_{1} - s_2}( \eta_{2} )\widehat{G}_{s_{2} - s_3}( \eta_{3} )\cdots \widehat{G}_{s_{p-1} - s_p}( \eta_{p} ) \Big\vert^2=  \Gamma_t^p p!  \| \varphi\|_\infty t^2 \int_{\bR^d} d\eta_1 p_\e(\eta_1)  Q_{p-1},  \label{CE2}
\end{align}
where we used $| \widehat{G}_{t-s_1} (\xi) | \leq t$, and $\varphi(\eta_1-\eta_2)\leq \| \varphi\|_\infty$
(which is finite because $\gamma(\bR^d)<\infty$) to obtain \eqref{CE2}, and
\begin{align}\label{def-Qp}
Q_{p-1}:= \int_{\Delta_{p}(t)} d\pmb{s_{p}} \int_{\bR^{pd-d}}      \prod_{j=2}^p\varphi(\eta_j -\eta_{j+1}) \big\vert \widehat{G}_{s_{j-1} -s_j }( \eta_{j} ) \big\vert^2 d\eta_j.
\end{align}
 Observe that $Q_{p-1}$ does not depend on $\eta_1$, thus for any $p\geq 2$
\begin{align}
\mathcal{T}_{p,\e} \leq   \Gamma_t^p p!  \| \varphi\|_\infty t^2 Q_{p-1}. \label{bddTpe}
\end{align}
By Lemma \ref{lem_4Qp}, we have for any $p\geq 2$
\[
Q_{p-1} \leq   \left( 2(t^2\vee 1)  \int_{\bR^d}   \frac{\mu(d\xi)}{1+ |\xi|^2} \right)^{p-1} \frac{t^p}{p!} \leq \frac{C^p}{p!}.
\]
Now, plugging the above estimate and  \eqref{bddTpe}   into  \eqref{claim2:sec41}, and
using \eqref{p=1} for $p=1$, we have
\[
\sup_{\e>0} \sum_{p\geq 1} \frac{1}{p!} \int_{\bR^d} \Phi_p(t,s;z) e^{-\frac{\e}{2} |z|^2}dz \leq \gamma(\bR^d) t^3 \Gamma_t+(2\pi)^d   \| \varphi\|_\infty t^2     \sum_{p\geq 2} \frac{\Gamma_t^p C^{p}}{p!} < +\infty.
\]
 This shows the claim \eqref{claim2:sec41} and the claim  \eqref{claim1:sec41}, which 
  confirm the limiting covariance structure \eqref{COV:C1C4}. Hence the proof of \eqref{COV(1)} is completed.      \qedhere

 \end{proof}

 \subsubsection{Proof of part (2) in Proposition \ref{PROP_COV}}      \label{sec412}

In this case,  the corresponding spectral density is given by
$
\varphi(\xi) = c_{d,\beta} | \xi |^{\beta - d}
$, for some constant $c_{d,\beta}$ that only depends on $d$ and $\beta$.

Now, let us recall the chaos expansion \eqref{WCE} of $u(t,x)$, from which we can obtain the following chaos expansion of $F_R(t)$:
\[
F_R(t) = \sum_{p\geq 1}  \mathbf{J}_{p,R}(t),
\]
where  $\mathbf{J}_{p,R}(t):= I_p\left(  \int_{|x| \leq R} \widetilde{f}_{t,x,p}  dx \right)$ is the projection of $F_R(t)$ onto the $p$th Wiener chaos, with $ \widetilde{f}_{t,x,p} $ given as in \eqref{eq:3wt}.

\medskip

Using the orthogonality of Wiener chaoses with different order, we have
\[
\sigma^2_R(t)= \text{Var}\big( F_R(t) \big) = \sum_{p\geq 1} \text{Var}\big(    \mathbf{J}_{p,R}(t) \big).
\]
Let us first consider the variance of $   \mathbf{J}_{1,R}(t)$. With $B_R=\{ x\in\bR^d:  |x | \leq R\}$, we can write
\begin{align}
&\quad \text{Var}\big(    \mathbf{J}_{1,R}(t) \big)  = \int_{B_R^2} dxdx' \langle G_{t-\bullet}(x-\ast),  G_{t-\bullet}(x'-\ast) \rangle_{\mathcal{H}} \notag \\
 & = \int_{B_R^2} dxdx'  \int_{[0,t]^2} dsds' \gamma_0(s-s') \int_{\bR^d} d\xi \varphi(\xi) e^{-i (x-x') \cdot \xi} \widehat{G}_{t-s}(\xi)  \widehat{G}_{t-s'}(\xi).  \label{oneBeta}
\end{align}
Then, making the change of variables  $(x, x', \xi)\to (Rx, Rx', \xi/R)$, we get
\begin{align*}
& \text{Var}\big(    \mathbf{J}_{1,R}(t) \big)  =R^{2d-\beta} \int_{[0,t]^2} dsds' \gamma_0(s-s') \int_{B_1^2} dxdx'  \int_{\bR^d} d\xi \varphi(\xi) e^{-i (x-x') \cdot \xi} \widehat{G}_{t-s}(\xi/R)  \widehat{G}_{t-s'}(\xi/R).
\end{align*}
Note that $\widehat{G}_{t}(\xi/R)$ is uniformly bounded and convergent to $t$ as $R\to\infty$; observe also that
\begin{equation}
\label{LR}
\ell_{R}(\xi):=\int_{B_R^2} dxdx' e^{-i (x-x') \cdot \xi}  =  \big\vert\mathcal{F}\mathbf{1}_{B_R}\big\vert^2(\xi) \in[0,\infty).
\end{equation}
Thus     we deduce from the dominated convergence theorem that, with $\kappa_{\beta,d} :=\int_{B_1^2} dxdx'  | x- x' |^{-\beta}$,
\begin{align}
\frac{\text{Var}\big(    \mathbf{J}_{1,R}(t) \big)}{R^{2d-\beta}} \xrightarrow{R\to\infty} &\int_{[0,t]^2} dsds' \gamma_0(s-s') (t-s) (t-s')    \int_{\bR^d} d\xi \varphi(\xi)   \big\vert\mathcal{F}\mathbf{1}_{B_1}\big\vert^2(\xi)     \notag  \\
&= \kappa_{\beta,d} \int_{[0,t]^2} dsds' \gamma_0(s-s') s s'  \label{Riesz:p=1}.
\end{align}
In the same way, we can get
\begin{align}
\frac{ \bE\big[  \mathbf{J}_{1,R}(t) \mathbf{J}_{1,R}(s)  \big]}{R^{2d-\beta}} \xrightarrow{R\to\infty} &\kappa_{\beta,d}\int_{0}^t  dr \int_0^s dr' \gamma_0(r-r')  (t-r)(s-r')    \label{Riesz:p=11}
\end{align}
In what follows, we will show that as $R\to\infty$,
\begin{align}\label{smallo}
\sum_{p\geq 2} \text{Var}\big(    \mathbf{J}_{p,R}(t) \big) = o(R^{2d-\beta}).
\end{align}
In view of the orthogonality again, the above claim \eqref{smallo} and the results  \eqref{Riesz:p=1}-\eqref{Riesz:p=11} imply that the first chaos of $F_R(t)$ is dominant and
\[
\frac{ \bE\big[  F_R(t) F_{R}(s)  \big] }{R^{2d-\beta}} \xrightarrow{R\to\infty} \kappa_{\beta,d}\int_{0}^t dr \int_0^s dr' \gamma_0(r-r') (t-r)(s-r'),
\]
which gives us the desired limiting covariance structure. Moreover, we obtain immediately that the process $\big\{R^{-d+\frac{\beta}{2} } F_R(t): t\in\bR_+\big\}$ converges in finite-dimensional distributions to the centered Gaussian process $\mathcal{G}_\beta$, whose covariance structure is given by  \eqref{COV:Gbeta}.

The rest of    Section \ref{sec412} is then devoted to proving   \eqref{smallo}. We point out that the strategy in Section \ref{sec411} can not be directly used, because  $\varphi$ is not uniformly bounded here.

\begin{proof}[Proof of Claim \eqref{smallo}] We begin by writing (with  $s_0 =\tilde{s}_{\sigma(0)} = t$ and $B_R=\{ x: |x| \leq R\}$)
\begin{align*}
&\quad \text{Var}\big(    \mathbf{J}_{p,R}(t) \big) = p! \int_{B_R^2}dxdx' \big\langle \widetilde{f}_{t,x,p},    \widetilde{f}_{t,x',p}  \big\rangle_{\mathcal{H}^{\otimes p}}=p! \int_{B_R^2}dxdx' \big\langle f_{t,x,p},    \widetilde{f}_{t,x',p}  \big\rangle_{\mathcal{H}^{\otimes p}} \\
&= c_{d,\beta}^p \sum_{\sigma\in\mathfrak{S}_p}  \int_{B_R^2}dxdx'  \int_{[0,t]^{2p}} d\pmb{s_p}d\pmb{\tilde{s}_p} \prod_{k=1}^p \gamma_0(s_k-  \tilde{s}_{k} ) \int_{\bR^{pd}} \left(\prod_{j=1}^p d\xi_j | \xi_j|^{\beta-d} \right) \\
&\quad\times e^{-i (x-x')\cdot (\xi_p+\cdots+\xi_1)} \prod_{j=0}^{p-1} \widehat{G}_{s_j - s_{j+1}}(\xi_p + \cdots+ \xi_{j+1} )  \widehat{G}_{\tilde{s}_{\sigma(j)} - \tilde{s}_{\sigma(j+1)}}(\xi_{\sigma(p)} + \cdots + \xi_{\sigma(j+1)} ),
\end{align*}
where we recall the convention that $G_t(z)=0$ for $t\leq 0$.
 Then, recalling definition \eqref{LR} of $\ell_{R}(\xi)$,
we can apply Lemma \ref{lem_ab} with
\[
 \mu (d\pmb{\xi_p} ) = \varphi(\xi_1) \cdots \varphi(\xi_p) \ell_R(\xi_1 + \cdots + \xi_p) d\xi_1 \cdots d\xi_p
\]
to get $ \text{Var}\big(    \mathbf{J}_{p,R}(t) \big)$ bounded by
\begin{align}\label{lead2}
 c_{d,\beta}^p \Gamma_t^p  \int_{\Delta_p(t)} d\pmb{s_p}\int_{\bR^{pd}} \left(\prod_{j=1}^p d\xi_j | \xi_j|^{\beta-d} \right) \ell_R(\xi_1+\cdots+\xi_p)\prod_{j=0}^{p-1} \Big\vert \widehat{G}_{s_j - s_{j+1}}(\xi_p + \cdots + \xi_{j+1} ) \Big\vert^2.
\end{align}
Making change of variables
\[
{\rm(i)} ~\eta_j = \xi_p+\cdots + \xi_j~\text{with $\eta_{p+1}=0$  \quad (ii)}~ (x, x', \eta_1)\to (Rx, Rx', \eta_1 R^{-1}),
\]
we  obtain
\begin{align*}
  \text{Var}\big(    \mathbf{J}_{p,R}(t) \big)& \leq c_{d,\beta}^p \Gamma_t^p   \int_{\Delta_p(t)} d\pmb{s_p}\int_{\bR^{pd}} \left(\prod_{j=1}^p d\eta_j | \eta_j - \eta_{j+1}|^{\beta-d} \right) \\
& \qquad \times  \left( \int_{B_R^2}dxdx'  e^{-i (x-x')\cdot\eta_1}\right) \prod_{j=0}^{p-1} \Big\vert \widehat{G}_{s_j - s_{j+1}}(\eta_{j+1} ) \Big\vert^2\\
&=  c_{d,\beta}^p \Gamma_t^p R^{2d-\beta}   \int_{\Delta_p(t)} d\pmb{s_p}\int_{\bR^{pd}} d\eta_1 | \eta_1 - \eta_2 R|^{\beta-d} \left(\prod_{j=2}^p d\eta_j | \eta_j - \eta_{j+1}|^{\beta-d} \right) \\
& \qquad \times  \left( \int_{B_1^2}dxdx'  e^{-i (x-x')\cdot\eta_1}\right) \Big\vert \widehat{G}_{t-s_1}( \eta_1/R ) \Big\vert^2   \prod_{j=1}^{p-1} \Big\vert \widehat{G}_{s_j - s_{j+1}}(\eta_{j+1} ) \Big\vert^2 \\
&\leq t^2 c_{d,\beta}^{p-1} \Gamma_t^p R^{2d-\beta}   \int_{\Delta_p(t)} d\pmb{s_p}\int_{\bR^{pd-d}}   \left(\prod_{j=2}^p d\eta_j | \eta_j - \eta_{j+1}|^{\beta-d} \right) \\
& \qquad \times  \left( \int_{B_1^2}dxdx'  |x-x'|^{-\beta} e^{-i (x-x')\cdot\eta_2 R}\right)     \prod_{j=1}^{p-1} \Big\vert \widehat{G}_{s_j - s_{j+1}}(\eta_{j+1} ) \Big\vert^2,
\end{align*}
where in the last inequality we used $| \widehat{G}_t | \leq t$ and  the following Fourier transform:
\begin{align*}  
 \int_{B_1^2}dxdx'  c_{d,\beta} \int_{\bR^d} d\eta_1 | \eta_1 - \eta_2 R |^{\beta-d}  e^{-i (x-x')\cdot\eta_1}
 &=c_{d,\beta} \int_{\bR^d} d\eta_1 | \eta_1 - \eta_2 R |^{\beta-d}  \big\vert \cF \mathbf{1}_{B_1} \big\vert^2(\eta_1)    \\
 &=\int_{B_1^2}dxdx'   | x- x' |^{-\beta} e^{-i (x-x')\cdot \eta_2 R}.
\end{align*}
Note that the integral $\int_{B_1^2}dxdx'  |x-x'|^{-\beta} e^{-i (x-x')\cdot\eta_2 R}$ is uniformly bounded by $\kappa_{\beta,d}$ and  it converges to zero
as $R\to\infty$ for $\eta_2\neq 0$.  This convergence is a consequence of the Riemann-Lebesgue's lemma.
 Taking into account
  the definition
   \eqref{def-Qp} of $Q_{p-1}$, then we have
   \[
  R^{\beta-2d}  \text{Var}\big(    \mathbf{J}_{p,R}(t) \big) \leq t^2   \kappa_{\beta,d} \Gamma_t^p Q_{p-1},
    \]
    which is summable over $p\geq 2$ by the arguments in the previous section. Hence by the dominated convergence theorem, we get
\[
R^{\beta-2d} \sum_{p\geq 2}  \text{Var}\big(    \mathbf{J}_{p,R}(t) \big) \xrightarrow{R\to\infty} 0.
\]
This proves the claim \eqref{smallo}. \qedhere

\end{proof}

 \subsubsection{Proof of part (3) in Proposition \ref{PROP_COV}}      \label{sec413}
Recall the two cases from \eqref{cases413}:
\begin{align*}
\begin{cases}
(c_1) & \gamma_i(x_i) = |x_i|^{-\beta_i}~\text{for some $\beta_i\in(0,1)$, $i=1,2$,} \\
(c_2) & \gamma_1\in L^1(\bR) ~{\rm and}~ \gamma_2(x) = |x|^{-\beta}~\text{for some $\beta\in(0,1)$.}
\end{cases}.
\end{align*}

\medskip

 In $(c_1)$,  the spectral density  is $\varphi(\xi_1, \xi_2) =c_{1,\beta_1}c_{1,\beta_2} | \xi_1|^{\beta_1-1} | \xi_2|^{\beta_2-1} $
 for $(\xi_1,\xi_2)\in\bR^2$, where $c_{1,\beta}$ is a constant that only depends on $\beta$. Now, using the notation from Section \ref{sec412}, we write
 \begin{align*}
&\text{Var}\big( \mathbf{J}_{1,R}(t) \big) =  \int_{B_R^2} dxdx'  \int_{[0,t]^2} dsds' \gamma_0(s-s') \int_{\bR^d} d\xi \varphi(\xi) e^{-i (x-x') \cdot \xi} \widehat{G}_{t-s}(\xi)  \widehat{G}_{t-s'}(\xi) \quad   \text{see \eqref{oneBeta}} \\
&=R^{4-\beta_1 -\beta_2}   \int_{[0,t]^2} dsds' \gamma_0(s-s') \int_{\bR^d} d\xi \varphi(\xi_1, \xi_2)  \int_{B_1^2} dxdx'  e^{-i (x-x') \cdot \xi} \widehat{G}_{t-s}(\xi/R)  \widehat{G}_{t-s'}(\xi/R),
    \end{align*}
where the last equality is obtained by  the change of variables  $(x,x', \xi_1, \xi_2)$ to $(Rx,Rx', \xi_1/R, \xi_2/R)$. Thus, by the exactly same arguments that lead to  \eqref{Riesz:p=1}, we can get
\[
\frac{\text{Var}\big( \mathbf{J}_{1,R}(t) \big)}{R^{4-\beta_1 - \beta_2}} \xrightarrow{R\to\infty}  K_{\beta_1, \beta_2}\int_{[0,t]^2} dsds' \gamma_0(s-s') ss',
\]
with  $K_{\beta_1, \beta_2} $ introduced in   \eqref{Kbeta12}. Similar to   \eqref{Riesz:p=11}, we also have
\begin{align}
\frac{\bE\big[  \mathbf{J}_{1,R}(t)  \mathbf{J}_{1,R}(s) \big]}{R^{4-\beta_1 - \beta_2}} \xrightarrow{R\to\infty} K_{\beta_1, \beta_2}\int_{0}^t dr \int_0^s dr' \gamma_0(r-r')  (t-r)(s-r'). \label{r2_1st}
\end{align}
   To obtain the result $(r_1)$, it remains to show
   \begin{align}
 \sum_{p\geq 2}  \text{Var}\big( \mathbf{J}_{p,R}(t) \big) = o\big( R^{4-\beta_1 - \beta_2} \big). \label{smallo1}
   \end{align}
  Its proof can be done \emph{verbatim} as for the result \eqref{smallo}, so  we omit the details here.

  \medskip

   Finally, let us look at the more interesting   case $(c_2)$ where $\gamma_1\in L^1(\bR)$ and $\gamma_2(x) =|x|^{-\beta}$ for   some fixed $\beta\in(0,1)$. In this case, the corresponding spectral density  is $\varphi(\xi_1,\xi_2) = \varphi_1(\xi_1) \varphi_2(\xi_2)$, where
   \begin{align}  \label{i-ii}
  \begin{cases}
  {\rm (i)}  & \text{ $\gamma_1 = \cF\varphi_1$ and $\varphi_1$ is uniformly continuous and bounded,     } \\
 {\rm (ii)}  & \text{ $\varphi_2(\xi_2) =c_{1,\beta} |\xi_2|^{\beta-1} $ for some constant $c_{1,\beta}$ that only depends on $\beta$.}
  \end{cases}
  \end{align}
Let us begin with \eqref{oneBeta} and make the usual change of variables $(x,x', \xi)\to (Rx,Rx', \xi/R)$ to obtain
  \begin{align*}
 & \text{Var}\big( \mathbf{J}_{1,R}(t) \big) =  \int_{B_R^2} dxdx'  \int_{[0,t]^2} dsds' \gamma_0(s-s') \int_{\bR^2} d\xi \varphi_1(\xi_1)\varphi_2(\xi_2) e^{-i (x-x') \cdot \xi} \widehat{G}_{t-s}(\xi)  \widehat{G}_{t-s'}(\xi) \\
  &=R^{3-\beta}  \int_{[0,t]^2} dsds' \gamma_0(s-s') \int_{\bR^2} d\xi \varphi_1(\xi_1/R)\varphi_2(\xi_2) \left( \int_{B_1^2} dxdx'  e^{-i (x-x') \cdot \xi} \right) \widehat{G}_{t-s}(\xi/R)  \widehat{G}_{t-s'}(\xi/R)\\
  &=R^{3-\beta}  \int_{[0,t]^2} dsds' \gamma_0(s-s') \int_{\bR^2} d\xi \varphi_1(\xi_1/R)\varphi_2(\xi_2) \big\vert \cF \mathbf{1}_{B_1} \big\vert^2(\xi) \widehat{G}_{t-s}(\xi/R)  \widehat{G}_{t-s'}(\xi/R).
  \end{align*}
Recall that $\varphi_1 $, $  \widehat{G}_{t-s} $ and $ \widehat{G}_{t-s'} $ are uniformly bounded and continuous. Note that,
applying Plancherel's theorem and the Parseval-type relation \eqref{parseval}, we have
\begin{align*}
\int_{\bR^2} d\xi  \varphi_2(\xi_2) \big\vert \cF \mathbf{1}_{B_1} \big\vert^2(\xi)  &=
2\pi \int_{\R^2}  dx_1 d\xi_2  \varphi_2(\xi_2)  \left| \cF\mathbf{1}_{B_1} (x_1, \bullet)(\xi_2) \right|^2  \\
&= 2\pi \int_{\bR^3} dx_1dx_2 dx_3 \mathbf{1}_{\{  x_1^2 + x_2^2\leq 1 \}} \mathbf{1}_{\{  x_1^2 + x_3^2\leq 1 \}} |x_2-x_3|^{-\beta}<\infty.
\end{align*}
  Therefore,
 by the dominated convergence theorem and  the fact that $\varphi_1(0) = \frac{1}{2\pi}\gamma_1(\bR)$, we get
 \begin{align*}
  \frac{ \text{Var}\big( \mathbf{J}_{1,R}(t) \big) }{R^{3-\beta}}\xrightarrow{R\to\infty}  & \varphi_1(0) \int_{[0,t]^2} dsds' \gamma_0(s-s') (t-s) (t-s') \int_{\bR^2} d\xi \varphi_2(\xi_2) \big\vert \cF \mathbf{1}_{B_1} \big\vert^2(\xi) \\
  = & \gamma_1(\bR) \mathcal{L}_\beta \int_{[0,t]^2} dsds' \gamma_0(s-s')ss',
  \end{align*}
  where $\mathcal{L}_\beta$ is defined in \eqref{def_L1B}.
 In the same way, we get for $s,t\in(0,\infty)$,
  \begin{align} \label{r3_1st}
  \frac{ \bE\big[ \mathbf{J}_{1,R}(t)  \mathbf{J}_{1,R}(s) \big] }{R^{3-\beta}}\xrightarrow{R\to \infty}  \gamma_1(\bR) \mathcal{L}_\beta   \int_0^t dr \int_0^s dr' \gamma_0(r-r')  (t-r)(s-r').
\end{align}
  Now we claim that the other chaoses are negligible, that is, as $R\to\infty$,
  \begin{align}\label{smallo2}
   \sum_{p\geq 2}  \text{Var}\big( \mathbf{J}_{p,R}(t) \big) = o(R^{3-\beta}).
  \end{align}
Note that the desired limiting covariance structure follows from \eqref{r3_1st} and the above claim \eqref{smallo2}.   The rest of this section is devoted to proving claim \eqref{smallo2}.

\begin{proof}[Proof of Claim \eqref{smallo2}] By the same  arguments that lead to the estimate \eqref{lead2}, we can obtain
\begin{align*}
 \text{Var}\big( \mathbf{J}_{p,R}(t) )\leq   \Gamma_t^p  \int_{\Delta_p(t)} d\pmb{s_p}\int_{\bR^{2p}} d\pmb{\xi_p} \varphi_p(\pmb{\xi_p})  \prod_{j=0}^{p-1} \Big\vert \widehat{G}_{s_j - s_{j+1}}(\xi_p + \cdots+ \xi_{j+1} ) \Big\vert^2 ~\text{with $s_0=t$},
\end{align*}
where $\varphi_p(\pmb{\xi_p}) = \varphi(\xi_1)\cdots \varphi(\xi_p) \ell_R(\xi_1+ \cdots+ \xi_p)$ for $\xi_j = (\xi_{j}^{(1)}, \xi_{j}^{(2)})\in\bR^2$, $j=1,\dots,p$ and $\ell_R$ is defined in \eqref{LR}. Recall that in the current case, $\varphi(\xi) = \varphi_1(\xi^{(1)}) \varphi_2( \xi^{(2)} )$ for $\xi=(\xi^{(1)},\xi^{(2)} )\in\bR^2$ and  $\varphi_1, \varphi_2$ satisfy the conditions in \eqref{i-ii}. Then, the following change of variables
\begin{center}
$\eta_j = \xi_j + \xi_{j+1} + \cdots+ \xi_p$ with $\eta_{p+1}=0$
\end{center}
yields
\begin{align*}
 \text{Var}\big( \mathbf{J}_{p,R}(t) ) & \leq   \Gamma_t^p  \int_{\Delta_p(t)} d\pmb{s_p}\int_{\bR^{2p}} d\pmb{\eta_p} \ell_R(\eta_1)  \prod_{j=0}^{p-1} \varphi(\eta_{j+1} - \eta_{j+2} ) \Big\vert \widehat{G}_{s_j - s_{j+1}}(\eta_{j+1} ) \Big\vert^2.
\end{align*}
In view of \eqref{LR}, we have $\ell_R(\eta_1/R) = R^4 \ell_1(\eta_1)$. Thus, by changing $\eta_1$ to $\eta_1/R$, we write
\begin{align*}
 \text{Var}\big( \mathbf{J}_{p,R}(t) ) & \leq  R^2 \Gamma_t^p  \int_{\Delta_p(t)} d\pmb{s_p}\int_{\bR^{2p}} d\pmb{\eta_p} \ell_1(\eta_1)   \varphi(\eta_1R^{-1} - \eta_2) \Big\vert \widehat{G}_{t - s_{1}}(\eta_{1}/R ) \Big\vert^2\\
 &\qquad\times      \prod_{j=1}^{p-1} \varphi(\eta_{j+1} - \eta_{j+2} ) \Big\vert \widehat{G}_{s_j - s_{j+1}}(\eta_{j+1} ) \Big\vert^2 \\
 &\leq R^{3-\beta} \Gamma_t^p  \| \varphi_1 \|_\infty t^2  \int_{\Delta_p(t)} d\pmb{s_p}\int_{\bR^{2p-2}} d\eta_2 ... d\eta_p  \left( \int_{\bR^2} d\eta_1 \ell_1(\eta_1)    c_{1,\beta}  \big\vert  \eta^{(2)}_1 - \eta_2^{(2)}R \big\vert^{\beta-1} \right)\\
 &\qquad\times      \prod_{j=1}^{p-1} \varphi(\eta_{j+1} - \eta_{j+2} ) \Big\vert \widehat{G}_{s_j - s_{j+1}}(\eta_{j+1} ) \Big\vert^2,
\end{align*}
where we used $ \vert \widehat{G}_{t - s_{1}}(\eta_{1}/R )  \vert^2 \leq t^2$. Observe that with $\eta= (\eta^{(1)}, \eta^{(2)})$, we deduce from  the fact  $\ell_1(\eta) = \big\vert \cF \mathbf{1}_{B_1}\big\vert^2(\eta^{(1)}, \eta^{(2)})$ that
\begin{align*}
\int_{\bR^2} d\eta \ell_1(\eta) \varphi_2(\eta^{(2)} - x R) &= \int_{\bR^2} d\eta^{(1)}d \eta^{(2)}  \big\vert \cF \mathbf{1}_{B_1}\big\vert^2(\eta^{(1)}, \eta^{(2)}+xR) \varphi_2(\eta^{(2)} ) \\
&= 2\pi \int_{\bR^3} \mathbf{1}_{\{ x_1^2+x_2^2\leq 1  \}}\mathbf{1}_{\{ x_1^2+x_3^2\leq 1  \}} e^{-i(x_2-x_3) xR} |x_2- x_3|^{-\beta}dx_1dx_2dx_3,
\end{align*}
by inverting the Fourier transform. The above quantity is uniformly bounded by $2\pi\mathcal{L}_\beta$ with $\mathcal{L}_\beta$ given in \eqref{def_L1B} and convergent to zero as $R\to\infty$ for every $x\neq 0$ in view of the Riemann-Lebesgue lemma.  Thus, $R^{\beta-3}  \text{Var}\big( \mathbf{J}_{p,R}(t) )$ is uniformly bounded by $2\pi \mathcal{L}_\beta \Gamma_t^p  \| \varphi_1 \|_\infty t^2 Q_{p-1}$, with $Q_{p-1}$ given by  \eqref{def-Qp} and it converges  to zero  as $R\to\infty$. Since $Q_{p} \leq C^p/p!$, we have
\[
\sum_{p\geq 2} \Gamma_t^p   Q_{p-1} <\infty,
\]
and the dominated convergence theorem implies \eqref{smallo2}.
\end{proof}

\begin{remark} Under the assumptions of Proposition  \ref{PROP_COV}, we point out that $\sigma_R(t) > 0$ for large enough $R$ so that the renormalized random variable $F_R(t)/ \sigma_R(t)$ is well-defined for large $R$.

\end{remark}

\subsection{Quantitative central limit theorems (QCLT) and f.d.d. convergence}  \label{sec42}

In this section, we prove the quantitative CLTs that are stated in Theorem \ref{MR2} and, as an easy consequence, we are also able to show the   convergence of finite-dimensional distributions in Theorem \ref{MR2}.
We consider first the part (1) and later we treat parts (2) and (3).

\subsubsection{Part (1)}
We will first show the estimate
\begin{align}\label{OPT1}
d_{\rm TV}\big( F_R(t)/\sigma_R(t), Z\big) \lesssim R^{-d/2},
\end{align}
where $Z \sim N(0,1)$.   By Proposition \ref{2nd-tool} applied to  $\frac 1{ \sigma_R(t)} F_R(t)$, we have
\begin{equation} \label{OPT1a}
d_{\rm TV}\big( F_R(t)/\sigma_R(t), Z\big)
\le  \frac 4  { \sigma^2_R(t)} \sqrt{\mathcal{A}_R},
\end{equation}
where
\begin{align*}
\mathcal{A}_R&= \int_{\bR_+^6\times\bR^{6d}} drdr' dsds' d\theta d\theta' dzdz' dydy' dwdw' \gamma_0(\theta - \theta') \gamma_0(s-s') \gamma_0(r-r')\gamma(z-z') \gamma(w-w')   \\
&\quad\times     \gamma(y-y')  \| D_{r,z}D_{\theta,w}F_R(t) \|_4  \| D_{s,y}D_{\theta',w'}F_R(t) \|_4  \| D_{r',z'}F_R(t)\|_4 \| D_{s', y' }F_R(t) \|_4.
\end{align*}
Recall from Section \ref{sec411} that    $\sigma^2_R(t)   \sim R^d$. Therefore,  in order to show \eqref{OPT1}  it suffices to prove the estimate
\begin{equation} \label{EST1}
\mathcal{A}_R \lesssim R^d.
\end{equation}
  Using   Minkowski's inequality, we can write
\[
\| D_{r,z}D_{\theta,w}F_R(t) \| _4 = \left\| \int_{B_R}  D_{r,z}D_{\theta,w} u(t,x) dx \right\|_4 \leq  \int_{B_R}  \big\| D_{r,z}D_{\theta,w} u(t,x) \big\|_4  dx.
\]
Then, it follows from our fundamental estimates  in  Theorem \ref{MR1}
 that
\begin{align}\label{USE1}
\| D_{r,z}D_{\theta,w}F_R(t) \| _4 \lesssim  \int_{B_R}  \widetilde{f}_{t,x,2}(r,z, \theta, w) dx,
 \end{align}
 with
 \[
 \widetilde{f}_{t,x,2}(r,z,\theta,w) = \frac{1}{2} \left[ G_{t-r}(x-z) G_{r-\theta}(z-w)\mathbf{1}_{\{ r > \theta\}} + G_{t-\theta}(x-w) G_{\theta-r}(z-w)\mathbf{1}_{\{ r < \theta\}}\right];
 \]
and,  in the same way, we have
\begin{align}\label{USE2}
\| D_{r,z}F_R(t) \| _4 \lesssim  \int_{B_R}  G_{t-r}(x-z)dx,
 \end{align}
 where the implicit constants in \eqref{USE1}-\eqref{USE2} do not depend on $(R, r,z,\theta,w)$ and are increasing in $t$.  Now, plugging \eqref{USE1}-\eqref{USE2} into the expression of $\mathcal{A}_R$, we get
 \begin{align*}
&\mathcal{A}_R\lesssim  \int_{[0,t]^6\times\bR^{6d}} drdr'dsds' d\theta d\theta' dzdz' dydy' dwdw' \gamma_0(r-r') \gamma_0(s-s') \gamma_0(\theta - \theta')\gamma(z-z')  \gamma(w-w')      \\
& \quad \times  \gamma(y-y') \int_{B_R^4}   \widetilde{f}_{t,x_1,2}(r,z, \theta, w)  \widetilde{f}_{t,x_2,2}(s,y, \theta', w') G_{t-r'}(x_3- z') G_{t-s'}(x_4 - y') d\pmb{x_4} =: \sum_{j=1}^4\mathcal{A}_{R,j}.
  \end{align*}
The four terms $ \mathcal{A}_{R,1},  \dots, \mathcal{A}_{R,4}$ are defined according to  whether $r>\theta$ or $r<\theta$, and  whether $ s>\theta'$ or $s<\theta'$.  For example,  the term $\mathcal{A}_{R,1}$ corresponds to $r>\theta$ and $s>\theta'$:
\begin{align}
\mathcal{A}_{R,1}&= \frac 14 \int_{[0,t]^6\times\bR^{6d}} drdr'dsds' d\theta d\theta' dzdz' dydy' dwdw' \gamma_0(r-r') \gamma_0(s-s') \gamma_0(\theta - \theta')   \notag \\  \notag
& \quad   \times    \gamma(w-w')  \gamma(y-y')   \gamma(z-z')G_{r-\theta}(z-w)G_{s-\theta'}(y-w')   \\
& \quad   \times     \int_{B_R^4} d\pmb{x_4} G_{t-r}(x_1-z)   G_{t-s}(x_2-y)    G_{t-r'}(x_3- z') G_{t-s'}(x_4 - y'). \label{asB}
\end{align}
The term $\mathcal{A}_{R,2}$ corresponds to $r>\theta$ and $s<\theta'$, the term $\mathcal{A}_{R,3}$ corresponds to $r<\theta$ and $s>\theta'$ and  the term $\mathcal{A}_{R,4}$  corresponds to $r< \theta$ and $s<\theta'$.
In the following, we estimate $\mathcal{A}_{R,j}$  for $j=1,2,3,4$ by a constant times $R^{d}$, which yields \eqref{EST1}.

  To get the bound for $\mathcal{A}_{R,1}$,  it suffices to perform the integration with respect to $dx_1, dx_2, dx_4$, $dy', dy, dw', dw$, $dz, dz', dx_3$ one by one, by taking into account the following facts:
 \[
\sup_{z\in\bR^d} \int_{B_R} G_{t-r}(x-z)dx \leq t \quad {\rm and}\quad \sup_{y'\in\bR^d}\int_{\bR^d} \gamma(y-y') dy  = \| \gamma\|_{L^1(\bR^d)}.
 \]
To get the bound for $\mathcal{A}_{R,2}$,  it suffices to perform the integration with respect to $dx_1, dx_3,dz', dz$, $dx_2, dw, dw', dy, dy', dx_4$.
  To get the bound for $\mathcal{A}_{R,3}$,  it suffices to perform the integration with respect to $dx_4, dy', dx_2, dy, dw', dx_1, dw, dz, dz', dx_3$ one by one.
   To get the bound for $\mathcal{A}_{R,4}$,  it suffices to perform the integration with respect to $dx_1, dx_3, dx_2, dz', dz, dw, dw', dy, dy', dx_4$ one by one.
This completes the proof of \eqref{OPT1}.

\medskip
In the second part of this subsection, we show the  f.d.d. convergence in Theorem \ref{MR2}-(1).

 Fix an integer $m\geq 1$ and choose $t_1, \dots, t_m\in(0,\infty)$. Put $\mathbf{F}_R = \big(  F_R(t_1), \dots , F_R(t_m)   \big)$. Then, by the result on limiting covariance structure from Section \ref{sec411}, we have that the covariance matrix of $R^{-d/2}\mathbf{F}_R$, denoted by $\mathcal{C}_R$,  converges to the matrix $\mathcal{C} = (\mathcal{C}_{ij}: 1\leq i,j \leq m)$, with
\[
\mathcal{C}_{ij}= \omega_d \sum_{p\geq 1} p! \int_{\bR^d} \big\langle \widetilde{f}_{t_i,x,p},   \widetilde{f}_{t_j,0,p} \big\rangle_{\mathcal{H}^{\otimes p}}dx.
\]
Since $F_R(t)=\delta(-DL^{-1}F_R(t))$, according to \cite[Theorem 6.1.2]{blue}\footnote{Note that there is a typo in Theorem 6.1.2 of \cite{blue}: In (6.1.3) of \cite{blue}, one has $d/2$ instead of $1/2$.}, for any twice differentiable function $h: \bR^m \to \bR$ with bounded second partial derivatives,
 \begin{align}
&\Big\vert  \bE\big[ h(R^{-d/2}\mathbf{F}_R) - h(\mathbf{Z}) \big] \Big\vert \leq \Big\vert  \bE\big[ h(R^{-d/2}\mathbf{F}_R) - h(\mathbf{Z}_R) \big] \Big\vert + \Big\vert  \bE\big[ h(\mathbf{Z}) - h(\mathbf{Z}_R) \big] \Big\vert  \notag \\
& \leq \frac{m}{2R^d} \| h''\|_\infty \sqrt{ \sum_{i,j=1}^m  {\rm Var}\Big( \big\langle DF_R(t_i), - DL^{-1}F_R(t_j) \big\rangle_\mathcal{H} \Big)      } + \Big\vert  \bE\big[ h(\mathbf{Z}) - h(\mathbf{Z}_R) \big] \Big\vert  , \label{ZZR}
 \end{align}
 with $\mathbf{Z}_R\sim N\big(0, \mathcal{C}_R \big)$,  $\mathbf{Z}\sim N\big(0, \mathcal{C} \big)$  and $\| h'' \|_\infty = \sup\big\{  \big\vert \frac{\partial^2}{\partial x_i \partial x_j} h(x)  \big\vert :  x\in\mathbb{R}^m,               i,j=1, \dots, m\big\}$. It is clear that the second term in \eqref{ZZR} tends to zero as $R\to\infty$. For the variance term in \eqref{ZZR},
 taking advantage of Proposition \ref{propAV} applied to $F=F_R(t_i)$ and $G=F_R(t_j)$ and using arguments analogous to those  employed to derive \eqref{EST1},
we obtain
 \[
  {\rm Var}\Big( \big\langle DF_R(t_i), - DL^{-1}F_R(t_j) \big\rangle_\mathcal{H} \Big)   \lesssim R^d.
 \]
Thus, the first term in \eqref{ZZR} is $O(R^{-d/2})$, implying that $ \bE\big[ h(R^{-d/2}\mathbf{F}_R) - h(\mathbf{Z}) \big]$ converges to zero as $R\to\infty$. This shows the convergence of the finite-dimensional distributions of $\{ R^{-d/2} F_R(t): t\in\bR_+ \}$  to those of the centered Gaussian process $\mathcal{G}$, whose covariance structure is given by
 \[
 \bE\big[ \mathcal{G}(t) \mathcal{G}(s) \big] = \omega_d \sum_{p\geq 1} p! \int_{\bR^d} \big\langle \widetilde{f}_{t,x,p},   \widetilde{f}_{s,0,p} \big\rangle_{\mathcal{H}^{\otimes p}}dx, \; \text{for $s,t\in[0,\infty)$}.
 \]
 This concludes the proof  of part (1) in Theorem \ref{MR2}. \hfill $\square$

\subsubsection{Proofs in parts (2) and (3)}

 \label{sec422}
In part (2), in view of the dominance of the first chaos, we have already obtained in Section \ref{sec412} that the
finite-dimensional distributions of the
process $\big\{R^{-d+\frac{\beta}{2} } F_R(t): t\in\bR_+\big\}$ converge  to those of a centered Gaussian process $\{\mathcal{G}_\beta(t) \}_{ t\in\bR_+}$, whose covariance structure is given by  \eqref{COV:Gbeta}.  By the same reason, the  convergence of the finite-dimensional distributions  in part (3) follows from   \eqref{r2_1st},  \eqref{smallo1},  \eqref{r3_1st}  and \eqref{smallo2}.

In this section, we   show that:
\begin{align}\label{OPT2}
d_{\rm TV}\big( F_R(t)/\sigma_R(t), Z \big)\lesssim \begin{cases}
 R^{-\beta/2} & \text{in part (2)}, \\
 R^{-\frac{1}{2}(\beta_1+\beta_2)} & \text{in  part (3) case $(a')$},\\
  R^{-(1+\beta)/2}  & \text{in   part (3) case $(b')$,}
  \end{cases}
\end{align}
where $Z\sim N(0,1)$.
Taking into account \eqref{OPT1a} and
the variance estimates in Section \ref{sec412} and Section \ref{sec413},  in order to get \eqref{OPT2}  it suffices to show that, for $j\in\{1,2,3,4\}$ and for $R\geq t$,
 \begin{align}\label{need2S}
\mathcal{A}_{R,j}\lesssim
\begin{cases}
R^{4d-3\beta}  & \text{in part (2)}, \\
R^{8- 3(\beta_1+\beta_2)}  & \text{in  case $(a')$ of  part (3),}\\
R^{5-3\beta}  & \text{in   case $(b')$ of part (3).}
\end{cases}
\end{align}
    Since the total-variation distance is always bounded by one, the bound \eqref{OPT2} still holds for $R<t$ by choosing the implicit constant large enough.

\bigskip

The rest of this section is then devoted to proving  \eqref{need2S} for $R\geq t$ and for $j\in\{1,2,3,4\}$.

  \begin{proof}[Proof of \eqref{need2S}] Let us first consider the term $\mathcal{A}_{R,1}$, which can be expressed as
     \begin{align*}
\mathcal{A}_{R,1}&=  \int_{[0,t]^6} drdr'dsds' d\theta d\theta'   \gamma_0(r-r') \gamma_0(s-s') \gamma_0(\theta - \theta') \mathbf{S}_{1,R}.
  \end{align*}
with
\begin{align*}
\mathbf{S}_{1,R}:&=  \int_{\bR^{6d}}   dzdz' dydy' dwdw'    \gamma(w-w')    \gamma(y-y')  \gamma(z-z') \int_{B_R^4} d\pmb{x_4}   G_{t-r}(x_1-z)  \\
& \quad   \times G_{r-\theta}(z-w)   G_{t-s}(x_2-y) G_{s-\theta'}(y-w')  G_{t-r'}(x_3- z') G_{t-s'}(x_4 - y').
\end{align*}
From now on, when $d=2$, we  write $(w, w', y, y', z, z') =(w_1, w_2, w'_1, w'_2, y_1, y_2,  y'_1, y'_2,z_1, z_2,z'_1, z'_2) $ and then $dy = dy_1 dy_2$; note also that $x_1,\dots , x_4$ denote the dummy variables in $\bR^d$.
By making the following change of variables
\begin{align}\label{cova}
(z,z', y, y', w,w', x_1,x_2,x_3,x_4 ) \to R(z,z', y, y', w,w', x_1,x_2,x_3,x_4 )
\end{align}
and using the scaling property $G_{t}(Rz) =  R^{1-d} G_{tR^{-1}}(z)$ for $d\in\{1,2\}$, we get
\begin{align}
& \mathbf{S}_{1,R}=R^{6+4d}  \int_{[-2,2]^{6d}}   dzdz' dydy' dwdw'    \gamma(Rw-Rw')    \gamma(Ry-Ry')  \gamma(Rz-Rz') \int_{B_1^4} d\pmb{x_4}    \notag  \\
&  \quad   \times G_{\frac{t-r}{R}}(x_1-z)   G_{\frac{r-\theta}{R}}(z-w)   G_{\frac{t-s}{R}}(x_2-y) G_{\frac{s-\theta'}{R}}(y-w')  G_{\frac{t-r'}{R}}(x_3- z') G_{\frac{t-s'}{R}}(x_4 - y'). \label{SPI}
 \end{align}
Note that we  have  replaced the integral domain $\bR^{6d}$ by $[-2,2]^{6d}$ in \eqref{SPI} without changing the value of $\mathbf{S}_{1,R}$, because, for example, $x_1\in B_1$ and $|x_1-z| \leq (t-r)/R$ implies  $|z|\leq 1 + tR^{-1}\leq 2$ while
$|z-w| \leq (r-\theta)/R$ and $|x_1-z| \leq  (t-r)/R$ imply $|w|\leq (t-\theta)R^{-1}+1 \leq 2$.

In view of the expression of $\gamma$ in part (2) and part (3), we write, for $z\in\bR^d$ ($z=(z_1,z_2)\in\bR^2$ when $d=2$),
\begin{align*}
\gamma(Rz) =
\begin{cases}
R^{-\beta} \gamma(z) & \text{in part (2)} ,\\
R^{-\beta_1-\beta_2} \gamma(z)  & \text{in case $(a')$ of part (3)}, \\
R^{-\beta} \gamma_1(Rz_1)\gamma_2(z_2)  & \text{in case $(b')$ of part (3)},
\end{cases}
\end{align*}
and it is easy to see that
\begin{align*}
\sup_{z'\in [-2,2]^d} \int_{[-2,2]^d}\gamma(Rz-Rz')dz \leq  \begin{cases}
{\displaystyle R^{-\beta}\int_{[-4,4]^d}\gamma(z)dz <  \infty}  & \text{in part (2)},  \\
\quad\\
  {\displaystyle R^{-\beta_1-\beta_2}\int_{[-4,4]^d}\gamma(z)dz <  \infty } & \text{in case $(a')$ of part (3)},   \\
  \quad\\
{\displaystyle  R^{-\beta-1} \gamma_1(\bR)\int_{-4}^4\gamma_2(s)ds <  \infty}  & \text{in case $(b')$ of part (3)}.
 \end{cases}
\end{align*}
To ease the notation, we just rewrite the above estimates as
\begin{align}\label{note2}
\sup_{z'\in [-2,2]^d} \int_{[-2,2]^d}\gamma(Rz-Rz')dz  \lesssim R^{-\alpha}
\end{align}
with $\alpha= \beta$ in part (2), $\alpha=\beta_1+\beta_2 $ in case $(a')$ of part (3),  and $\alpha=1+\beta$ in case $(b')$ of part (3).

To estimate $\mathcal{A}_{R,1}$,  we can use   \eqref{note2} to perform  integration with respect to $dx_1, dx_2, dx_4$, $dy', dy, dw', dw$, $dz, dz', dx_3$ successively. More precisely,  performing the integration  with respect to $dx_1, dx_2, dx_4$ and using the fact
\begin{align}\label{useineq1}
\sup_{ (s,z')\in[0,t]\times\bR^d}  \int_{\bR^d}G_{s/R}(z-z') dz = t/R
\end{align}
  gives us
\begin{align*}
  \mathbf{S}_{1,R}&\leq R^{3+4d} t^3   \int_{[-2,2]^{6d}}   dzdz' dydy' dwdw'    \gamma(Rw-Rw')    \gamma(Ry-Ry')  \gamma(Rz-Rz') \int_{B_1} dx_3       \\
&  \qquad   \times    G_{\frac{r-\theta}{R}}(z-w)    G_{\frac{s-\theta'}{R}}(y-w')  G_{\frac{t-r'}{R}}(x_3- z') \\
&\lesssim R^{3+4d}   R^{-\alpha}   \int_{[-2,2]^{5d}}   dzdz' dy dwdw'    \gamma(Rw-Rw')       \gamma(Rz-Rz') \int_{B_1} dx_3       \\
&  \quad   \times    G_{\frac{r-\theta}{R}}(z-w)    G_{\frac{s-\theta'}{R}}(y-w')  G_{\frac{t-r'}{R}}(x_3- z')  \quad \text{by integrating out $dy'$ and using \eqref{note2}} \\
&\lesssim R^{2+4d -\alpha }     \int_{[-2,2]^{4d}}   dzdz'  dwdw'    \gamma(Rw-Rw')       \gamma(Rz-Rz') \int_{B_1} dx_3       \\
&  \quad   \times    G_{\frac{r-\theta}{R}}(z-w)       G_{\frac{t-r'}{R}}(x_3- z')  \quad \text{by integrating out $dy$ and using \eqref{useineq1} } \\
&\lesssim R^{2+4d -2\alpha }     \int_{[-2,2]^{3d}}   dzdz'   dw     \gamma(Rz-Rz') \int_{B_1} dx_3      G_{\frac{r-\theta}{R}}(z-w)       G_{\frac{t-r'}{R}}(x_3- z')
\intertext{by integrating out $dw'$ and using \eqref{note2}; then, using \eqref{useineq1} to integrate out $dw$ }
&\lesssim R^{1+4d -2\alpha }     \int_{[-2,2]^{2d}}   dzdz'     \gamma(Rz-Rz') \int_{B_1} dx_3           G_{\frac{t-r'}{R}}(x_3- z')  \lesssim R^{4d -3\alpha }
 \end{align*}
where the last inequality is obtained by integrating out $dz, dz'$, $dx_3$ one by one and using \eqref{note2} and \eqref{useineq1}. The bound
\[
\mathbf{S}_{1,R} \lesssim R^{4d -3\alpha }  =
 \begin{cases}
R^{4d-3\beta} & \text{in part (2)},  \\
 R^{8-3\beta_1-3\beta_2} & \text{in cae $(a')$ of part (3)},   \\
  R^{5-3\beta} & \text{in cae $(b')$ of part (3)}
 \end{cases}
\] is uniform over $(r,r',s,s' ,\theta,\theta')\in[0,t]^6$, and hence we obtain \eqref{need2S} for $j=1$. For the other terms $\mathcal{A}_{R,2}, \mathcal{A}_{R,3}$ and $\mathcal{A}_{R,4}$, the arguments are the same: We  first go through the same change of variables \eqref{cova} to obtain terms $\mathbf{S}_{j, R}$ similar to $\mathbf{S}_{1, R}$ in \eqref{SPI},  and then use the facts \eqref{note2} and \eqref{useineq1} to perform one-by-one integration with respect to  the variables
\[
\begin{cases}
dx_1, dx_3,dz', dz, dx_2, dw, dw', dy, dy', dx_4 \quad\text{for estimating $\mathcal{A}_{R,2}$} \\
dx_4, dy', dx_2, dy, dw', dx_1, dw, dz, dz', dx_3\quad\text{for estimating $\mathcal{A}_{R,3}$} \\
dx_1, dx_3, dx_2, dz', dz, dw, dw', dy, dy', dx_4\quad\text{for estimating $\mathcal{A}_{R,4}$}
\end{cases}.
\]
This concludes the proof of \eqref{need2S} and hence completes the proof of \eqref{OPT2}. \qedhere

\end{proof}

\subsection{Tightness } \label{sec43}

This section is devoted to establishing the tightness in Theorem \ref{MR2}. This, together with the results in Section \ref{sec41} and Section \ref{sec42} will conclude the proof of Theorem \ref{MR2}. To get the tightness, we appeal to the criterion of   Kolmogorov-Chentsov (see \emph{e.g.} \cite[Corollary 16.9]{OK}). Put
\begin{align}
\sigma_R =
\begin{cases}
R^{d/2} &\text{in part (1) of Theorem \ref{MR2}} \\
R^{d-\frac{\beta}{2}} &\text{in part (2) of Theorem \ref{MR2}}  \\
R^{2-  \frac{1}{2}(\beta_1+\beta_2) } &\text{in part (3)-$(a')$ of Theorem \ref{MR2}}  \\
R^{(3-  \beta)/2} &\text{in part (3)-$(b')$ of Theorem \ref{MR2}}
\end{cases} \label{cases123}
\end{align}
and we will   show, for any fixed $T>0$, that the following inequality holds for any integer $k\geq 2$ and any $0 < s < t \leq T\leq R$:
\begin{align}\label{goal_tight}
\big\| F_R(t)- F_R(s) \big\|_k \lesssim  (t-s) \sigma_R,
\end{align}
where the implicit constant does not depend on $R, s$ or $t$. This moment estimate \eqref{goal_tight} ensures the tightness of $\big\{ \sigma_R^{-1} F_R(t): t\in[0,T]\big\}$ for any fixed $T>0$ and, therefore, the desired tightness on $\bR_+$
holds.

To show the above moment estimate \eqref{goal_tight} for  the increment $F_R(t)- F_R(s)$, we begin with the chaos expansion
\[
F_R(t) - F_R(s) = \sum_{n\geq 1} I_n\left(  \int_{B_R}dx [  f_{t,x,n} -  f_{s,x,n}]  \right) =  \sum_{n\geq 1} I_n\big( g_{n,R}\big),
\]
where $s, t$ are fixed, so we leave  them out of  the subscript of the kernel $g_{n,R}$ and
\begin{align}\label{gnR}
 g_{n,R}(\pmb{s_n} , \pmb{y_n}) = \Big[ \varphi_{t,R}(s_1,y_1)  -  \varphi_{s,R}(s_1,y_1) \Big] \prod_{j=1}^{n-1}G_{s_j- s_{j+1}}(y_j-y_{j+1})
\end{align}
with $\prod_{j=1}^0 =1$ and $\varphi_{t,R}(r,y) := \int_{B_R} G_{t-r}(x-y) dx$.
The rest of this section is then devoted to proving \eqref{goal_tight}.

\begin{proof}[Proof of \eqref{goal_tight}]
By the triangle inequality and using the moment estimate \eqref{hyper}, we get, for any $k\in [2,\infty)$,
\begin{align*}
\big\| F_R(t) - F_R(s) \big\|_k \leq  \sum_{n\geq 1} (k-1)^{n/2} \left\|  I_n\left( g_{n,R}  \right) \right\|_2.
\end{align*}
Note that  the kernel  $g_{n,R} =0$ outside $[0,t]^n \times\bR^{dn}$. Then, using  \eqref{miso} and \eqref{white-ineq}, we can write
\begin{align*}
\big\| F_R(t) - F_R(s) \big\|_k \leq  \sum_{n\geq 1}  \big[ \Gamma_t (k-1) \big]^{n/2}    \Big( n!  \|   \widetilde{g}_{n,R}  \|_{\cH_0^{\otimes n}}^2  \Big)^{1/2},
\end{align*}
where $ \widetilde{g}_{n,R}$ is the canonical symmetrization of $g_{n,R}$:
\[
 \widetilde{g}_{n,R}( \pmb{s_n} , \pmb{y_n}  ) = \frac{1}{n!} \sum_{\sigma\in\mathfrak{S}_n}   \Big[ \varphi_{t,R}(s_{\sigma(1)} ,y_{\sigma(1)})  -  \varphi_{s,R}(s_{\sigma(1)} ,y_{\sigma(1)}) \Big] \prod_{j=1}^{n-1}G_{s_{\sigma(j)}- s_{\sigma(j+1)}}(y_{\sigma(j)}-y_{\sigma(j+1)}).
\]
With the convention \eqref{rule1} in mind, we can write
\begin{align*}
 &n!  \|   \widetilde{g}_{n,R}  \|_{\cH_0^{\otimes n}}^2 = \int_{t>s_1>\cdots>s_n >0} d\pmb{s_n} \int_{\bR^{2nd}}  \Big[ \varphi_{t,R}(s_1,y_1)  -  \varphi_{s,R}(s_1,y_1) \Big] \left( \prod_{j=1}^{n-1}G_{s_j- s_{j+1}}(y_j-y_{j+1}) \right) \\
  &\qquad\qquad\qquad \times  \Big[ \varphi_{t,R}(s_1,y'_1)  -  \varphi_{s,R}(s_1,y'_1) \Big] \left( \prod_{j=1}^{n-1}G_{s_j- s_{j+1}}(y'_j-y'_{j+1}) \right) \prod_{j=1}^n \gamma(y_j - y_j') dy_j dy_j'.
 \end{align*}
Then,  using Fourier transform, we can rewrite  $n!  \|   \widetilde{g}_{n,R}  \|_{\cH_0^{\otimes n}}^2$ as follows:
\begin{align}
& n!  \|   \widetilde{g}_{n,R}  \|_{\cH_0^{\otimes n}}^2 =  \int_{t>s_1>\cdots>s_n >0} d\pmb{s_n} \int_{\bR^{nd}} \mu(d\pmb{\xi_p} )  \big\vert \cF \mathbf{1}_{B_R}\big\vert^2(\xi_1+\cdots+\xi_p)  \notag \\
& \qquad\qquad  \times  \big\vert  \widehat{G}_{t-t_1}(\xi_1 + \cdots + \xi_p)- \widehat{G}_{s-t_1}(\xi_1 + \cdots + \xi_p)  \big\vert^2  \prod_{j=1}^{n-1} \big\vert \widehat{G}_{s_j- s_{j+1}} \big\vert^2(\xi_{j+1} + \cdots + \xi_p)  . \label{lipFG}
 \end{align}
Recall the expression \eqref{FG}  $\widehat{G}_t(\xi)= \frac{\sin(t|\xi|)}{|\xi|}$ and note that it is a $1$-Lipschitz function in the variable $t$, uniformly over $\xi\in\bR^d$. Then
\[
\big\vert  \widehat{G}_{t-t_1}(\xi_1 + \cdots + \xi_p)- \widehat{G}_{s-t_1}(\xi_1 + \cdots + \xi_p)  \big\vert^2  \leq (t-s)^2.
\]
Therefore, plugging this inequality into \eqref{lipFG} and then applying Lemma \ref{lem_4Qp} yields
\begin{align*}
 n!  \|   \widetilde{g}_{n,R}  \|_{\cH_0^{\otimes n}}^2 &\leq (t-s)^2  \int_{t>s_1>\cdots>s_n >0} d\pmb{s_n}  \left(\int_{\bR^{d}} \mu(d\xi )  \big\vert \cF \mathbf{1}_{B_R}\big\vert^2(\xi)\right)      \prod_{j=1}^{n-1} \int_{\bR^d}\mu(d\xi_j) \big\vert \widehat{G}_{s_j- s_{j+1}} \big\vert^2(\xi_j)   \\
&  \leq (t-s)^2  \frac{t^n}{n!} \left(  2(t^2\vee 1)\int_{\bR^{d}}  \frac{\mu(d\xi )}{1+ |\xi|^2}   \right)^{n-1} \int_{\bR^{d}} \mu(d\xi )  \big\vert \cF \mathbf{1}_{B_R}\big\vert^2(\xi),
 \end{align*}
 which is finite since $ \mathbf{1}_{B_R}\in \cP_0 $. Using Fourier transform, we can write
 \begin{align*}
  \int_{\bR^{d}} \mu(d\xi )  \big\vert \cF \mathbf{1}_{B_R}\big\vert^2(\xi) = \int_{\bR^{2d}}  \mathbf{1}_{B_R}(x)  \mathbf{1}_{B_R}(y) \gamma(x-y)dxdy.
 \end{align*}
 Now let us consider the cases in \eqref{cases123}.

\medskip

\noindent In part (1) where $\gamma\in L^1(\bR^d)$,
\[
 \int_{\bR^{2d}}  \mathbf{1}_{B_R}(x)  \mathbf{1}_{B_R}(y) \gamma(x-y)dxdy \leq \gamma(\bR^d) \omega_d R^d \lesssim \sigma_R^2.
 \]

 \noindent In the other cases,
we can  make the change of variables $(x,y)\to R(x,y)$ to obtain
\begin{align*}
 \int_{\bR^{2d}}  \mathbf{1}_{B_R}(x)  \mathbf{1}_{B_R}(y) \gamma(x-y)dxdy   &=  R^{2d} \int_{\bR^{2d}}  \mathbf{1}_{B_1}(x)  \mathbf{1}_{B_1}(y) \gamma(Rx-Ry)dxdy \\
 &\lesssim  R^{2d-\alpha} = \sigma_R^2,
 \end{align*}
using  \eqref{note2}
with $\alpha= \beta$ in part (2), $\alpha=\beta_1+\beta_2 $ in case $(a')$,  and $\alpha=1+\beta$ in case $(b')$.

As a consequence, we get
\[
 n!  \|   \widetilde{g}_{n,R}  \|_{\cH_0^{\otimes n}}^2 \leq  \frac{C^n}{n!} \sigma_R^2 (t-s)^2,
 \]
 and therefore,
 \[
 \big\| F_R(t) - F_R(s) \big\|_k \leq  |t - s| \sigma_R \sum_{n\geq 1} \big[C \Gamma_t (k-1) \big]^{n/2}   \frac{1}{\sqrt{n!} },
 \]
 which leads to \eqref{goal_tight}.
\end{proof}

\section{Proof of Theorem \ref{MR3}} \label{sec5}

We argue as in the proof of Theorem 1.2 of \cite{BQS}.
As we explained in the introduction, it suffices to show
 that for each $m\ge 1$,
$$
\|D u(t,x)\|_{\cH}>0 \quad \mbox{a.s. on} \ \Omega_m,
$$
where $\Omega_m =\{ |u(t,x) | \ge   1/m\}$.

We claim that, almost surely, the function $(s,y) \mapsto D_{s,y}u(t,x)$ satisfies the assumptions of Lemma \ref{pos-norm}.  Indeed,
for $d=2$,  by Minkowski's inequality and the estimate \eqref{goalz}, we have
\begin{align*}
 \bE  \left(\int_0^t ds     \left(  \int_{\R^2}  | D_{s,y} u(t,x) |^{2q} dy \right)^{ 1/q}    \right)
& \leq  \int_0^t ds      \left(  \int_{\R^2} \Big|  \bE\big[  | D_{s,y} u(t,x) |^{2} \big] \Big|^q dy \right)^{ 1/q}   \\
& \leq C \int_0^t ds      \left(  \int_{\R^2}   G^{2q}_{t-s} (x-y)dy \right)^{ 1/q}  <\infty.
\end{align*}
For $d=1$, again by  the estimate \eqref{goalz},
\[
 \bE  \left(\int_0^t ds     \left(  \int_{\R}  | D_{s,y} u(t,x) |^{2} dy \right)    \right) \le
 C \int_0^t ds       \int_{\R}  G^{2}_{t-s} (x-y)dy  <\infty.
\]
Moreover, $(s,y) \mapsto D_{s,y}u(t,x)$  has compact support on $[0,t]\times B_M$ for some $M>0$.  As a consequence,  by Lemma \ref{pos-norm}, it suffices to prove that
\begin{equation}
\label{int2}
\int_0^t \|D_{r,\bullet}u(t,x)\|_0^2 dr = \int_0^t \int_{\bR^{2d}}D_{r,z}u(t,x) D_{r,z'}u(t,x)\gamma(z-z')dzdz'dr>0 ~ \mbox{a.s. on $\Omega_m$}.
\end{equation}

  As in the proof of Lemma 5.1 of \cite{BQS}, Corollaries \ref{D-norm} and \ref{D2-norm} allow us to infer that the $\cH\otimes \cP_0$-valued process $K^{(r)}$  defined by
\[
K^{(r)}(s,y,z)=G_{t-s}(x-y)D_{r,z}u(s,y)
\]
belongs to the space $\bD^{1,2}(\cH \otimes \cP_0)$. This is because, using Corollary \ref{D-norm},  we can write
\begin{align*}
 \bE\big(  \|K^{(r)}\|_{\cH \otimes \cP_0}^2 \big)    &
= \int_{[r,t]^2} \int_{\bR^{2d}} G_{t-s}(x-y)G_{t-s'}(x-y') \bE \Big(  \big\langle D_{r,\bullet}u(s,y), D_{r,\bullet} u(s',y') \big\rangle_0 \Big)\\
& \qquad \times  \gamma_0(s-s')  \gamma(y-y') dydy' dsds'\\
& \le C\int_{[r,t]^2} \int_{\bR^{2d}} G_{t-s}(x-y)G_{t-s'}(x-y')
  \gamma_0(s-s')  \gamma(y-y') dydy' dsds' <\infty,
\end{align*}
and in the same way, using Corollary \ref{D2-norm}  we can show that   $\bE\big(  \| DK^{(r)}\|_{ \cH \otimes \cH \otimes \cP_0}^2 \big) <\infty$.
 Therefore,  the process $K^{(r)}$ belongs to the domain of the  $\cP_0$-valued Skorokhod integral, denoted by $\overline{\delta}$.
Then,  using the same arguments as in the proof of Proposition 5.2 of \cite{BQS}, replacing $L^2(\bR)$ by $\cP_0$, we can show that
for any $r \in [0,t]$, the following equation holds in $L^2(\Omega;\cP_0)$:
\begin{equation}
\label{eq-Malliavin-P}
D_{r,\bullet}u(t,x)=G_{t-r}(x-\bullet)u(r,\bullet)+\int_r^t \int_{\bR^d}G_{t-s}(x-y)D_{r,\bullet}u(s,y)W(\overline{\delta} s, \overline{\delta}y).
\end{equation}

Let $\delta \in (0,t \wedge 1)$ be arbitrary.  Due to relation   \eqref{eq-Malliavin-P}
we have, almost surely,
\begin{align}
 \int_{0}^t \|D_{r, \bullet} u(t,x)\|^2_{0}\, dr & \geq \int_{t-\delta}^t \|D_{r,\bullet} u(t,x)\|^2_{0}\, dr   \geq  \frac{1}{2} \int_{t-\delta}^t \|G_{t-r}(x-\bullet) u(r,\bullet)\|^2_{0} \, dr - I(\delta),
 \label{eq:33}
\end{align} where
\begin{align*}
 I(\delta) & = \int_{t-\delta}^t \left\| \int_{r}^t \int_{\bR^d} G_{t-s}(x-y) D_{r, \bullet} u(s,y)
 W(\overline{\delta} s, \overline{\delta} y) \right\|^2_{0} \, dr\\
 & = \int_{t-\delta}^t \left\| \int_{t-\delta}^t \int_{\bR^d} G_{t-s}(x-y) D_{r, \bullet} u(s,y)
 W(\overline{\delta} s, \overline{\delta} y)\right\|^2_{0} \, dr.
\end{align*}
On the event $\Omega_m=\{ | u(t,x) | \geq 1/m\}$, we have
\begin{align*}
&   \int_{t-\delta}^t  \|G_{t-r}(x-\bullet) u(r,\bullet)\|_0^2 dr =
\int_{t-\delta}^t  \int_{\bR^{2d}} G_{t-r}(x-z)G_{t-r}(x-z') u(r,z) u(r,z')\gamma(z-z')dzdz' dr \\
&   = \int_{t-\delta}^t  \int_{\bR^{2d}}  G_{t-r}(x-z)G_{t-r}(x-z') u(t,x)^2 \gamma(z-z')dzdz' dr \\
&   \quad  -\int_{t-\delta}^t  \int_{\bR^{2d}}
G_{t-r}(x-z)G_{t-r}(x-z') \big[  u(t,x)^2-u(r,z)u(r,z')\big]
\gamma(z-z')dzdz' dr \\
&   \geq \frac{1}{m^2} \psi_0(\delta) - J(\delta),
\end{align*}
where
\begin{align*}
\psi_0(\delta)& :=\int_{t-\delta}^t \int_{\bR^{2d}}
G_{t-r}(x-z)G_{t-r}(x-z')\gamma(z-z') dzdz' dr\\
& = \int_0^\delta \int_{\bR^{2d}} G_r(z)G_r(z')\gamma(z-z')dz dz'dr
\end{align*}
and
\begin{align*}
 J(\delta) & := \int_{t-\delta}^t \int_{\bR^{2d}}   G_{t-r}(x-z)G_{t-r}(x-z')\gamma(z-z') \Big( u(t,x)^2
 - u(r,z)u(r,z')\Big) dz dz' dr.
\end{align*}
Coming back to \eqref{eq:33}, we can write
\begin{equation}
\label{LB-D}
\int_0^t \|D_{r,\bullet}u(t,x)\|_0^2  dr \geq \frac{1}{2 m^2}\psi_0(\delta)- \frac12
J(\delta) - I(\delta) \quad \mbox{on} \quad \Omega_m.
\end{equation}

We now give upper bounds for the first moments of $J(\delta)$ and $I(\delta)$.
We will use the following facts, which were proved in \cite{BS17}:
\begin{align*}
  C_t^*&:=\sup_{(s,y) \in[0,t] \times \bR^d}\|u(s,y)\|_2<\infty \qquad (\text{see also \eqref{calsoRem31} in Remark \ref{rem_Lp}} ) \\
g_{t,x}(\delta)& := \sup_{|t-s|<\delta} \sup_{|x-y|<\delta} \|u(t,x)-u(s,y)\|_2 \to 0 \quad \mbox{as} \ \delta \to 0.
\end{align*}
We first treat $J(\delta)$.
By Cauchy-Schwarz inequality, for any $r\in [0,t]$ and $z,z' \in \bR^2$,
\begin{align*}
\bE\big[ |u(t,x)^2-u(r,z)u(r,z')| \big] & \leq \|u(t,x)\|_2 \|u(t,x)-u(r,z)\|_2+ \|u(r,z)\|_2 \|u(t,x)-u(r,z')\|_2 \\
&\leq C_t^* \Big( \|u(t,x)-u(r,z)\|_2+ \|u(t,x)-u(r,z')\|_2  \Big).
\end{align*}
Since $G_{t-r}(x-z)$ contains the indicator of the set
$\{|x-z|<t-r\}$, we obtain:
\begin{align*}
\bE(|J(\delta)|) &\leq 2C_t^*\int_{t-\delta}^t \int_{\bR^{2d}}  G_{t-r}(x-z)G_{t-r}(x-z')\gamma(z-z')\|u(t,x)-u(r,z)\|_2dz dz' dr \\
&\leq 2C_t^* \int_{t-\delta}^t \int_{\bR^{2d}}
G_{t-r}(x-z)G_{t-r}(x-z')\gamma(z-z')  \sup_{ \substack{ t-\delta<s<t \\ |x-y|<\delta }}\|u(t,x)-u(s,y)\|_2dz dz' dr.
\end{align*}
 It follows that
\begin{equation}
\label{estimate-J}
\bE(|J(\delta)|)\leq 2 C_t^* g_{t,x}(\delta) \psi_0(\delta).
\end{equation}
Next, we treat $I(\delta)$. Applying  Proposition 6.2 of \cite{B12} to the $\cP_0$-valued process
\[
U(s,y)=\mathbf{1}_{[t-\delta,t]}(s)G_{t-s}(x-y)D_{r,\bullet}u(s,y)
\]
we obtain
\[
\bE(\|\overline{\delta}(U)\|_0^2) \leq \bE(\|U\|_{\cH \otimes \cP_0}^2)+\bE(\|DU\|_{\cH \otimes \cH \otimes \cP_0}^2).
\]
We have,
\begin{align*}
\bE(\|U\|_{\cH \otimes \cP_0}^2)& =\bE\Bigg( \int_{[t-\delta,t]^2}
\int_{\bR^{2d}}
 G_{t-s}(x-y)G_{t-s'}(x-y') \gamma_0(s-s')\gamma(y-y') \\
 & \qquad\qquad  \times  \big\langle D_{r,\bullet}u(s,y),D_{r,\bullet}u(s',y') \big\rangle_0 dydy' dsds' \Bigg)
 \end{align*}
 and
 \begin{align*}
&\bE(\|DU\|_{\cH \otimes \cH \otimes \cP_0}^2)=\bE \Bigg(\int_{[t-\delta,t]^2}
\int_{[0,r]^2} \int_{\bR^{4d}}G_{t-s}(x-y) G_{t-s'}(x-y') \gamma_0(s-s')\gamma(y-y') \\
& \qquad  \times  \big\langle D^2_{(\theta,w),(r,\bullet)}u(s,y),D_{(\theta',w'),(r,\bullet)}u(s',y')\big\rangle_0\,
\gamma_0(\theta-\theta')\gamma(w-w') dwdw' dydy' d\theta d\theta' dsds' \Bigg)\\
& \quad =\bE  \Bigg(\int_{[t-\delta,t]^2} \int_{\bR^{2d}}G_{t-s}(x-y) G_{t-s'}(x-y') \gamma_0(s-s')\gamma(y-y') \\
& \qquad \qquad    \times \big\langle   DD_{r,\bullet}u(s,y), DD_{r,\bullet}u(s',y')\big\rangle_{\cH \otimes \cP_0} dydy' ds ds'\Bigg).
\end{align*}
Hence,
$\bE(I(\delta)) \leq I_1(\delta)+I_2(\delta)$,  where
\begin{align*}
I_1(\delta)&:=\bE \Bigg( \int_{[t-\delta,t]^3} \int_{\bR^{2d}}
 G_{t-s}(x-y)G_{t-s'}(x-y') \gamma_0(s-s')\gamma(y-y') \\
 & \qquad  \times  \big\langle   D_{r,\bullet}u(s,y),D_{r,\bullet}u(s',y') \big\rangle_0 dydy' dsds'dr \Bigg)
 \end{align*}
 and
 \begin{align*}
I_2(\delta) &:=\bE\Bigg( \int_{[t-\delta,t]^3} \int_{\bR^{2d}}G_{t-s}(x-y) G_{t-s'}(x-y') \gamma_0(s-s')\gamma(y-y') \\
& \qquad \times  \langle  DD_{r,\bullet}  u(s,y), DD_{r,\bullet}u(s',y')\rangle_{\cH \otimes \cP_0} dydy' ds ds'dr \Bigg).
\end{align*}
Using  Cauchy-Schwarz inequality and Corollaries \ref{D-norm} and \ref{D2-norm}, we obtain:
\[
\bE\Big(   \big|\langle D_{r,\bullet}u(s,y),D_{r,\bullet}u(s',y')\rangle_0 \big|\Big)\leq C_t  \quad \mbox{and} \quad \bE\Big( \big|  \langle DD_{r,\bullet}u(s,y), DD_{r,\bullet}u(s',y')\rangle_{\cH \otimes \cP_0}\big| \Big)\leq C_t''.
\]
Hence,
\begin{equation}
\label{estimate-I}
\bE[I(\delta)] \leq (C_t+C_t'')\delta \phi(\delta),
\end{equation}
where
\begin{align}
\phi(\delta) :& =\int_{[t-\delta,t]^2}
\int_{\bR^{2d}}G_{t-s}(x-y)G_{t-s'}(x-y')\gamma_0(s-s')\gamma(y-y')dydy'ds
ds' \notag \\
& = \int_{[0,\delta]^2}\int_{\bR^{2d}}
G_s(y)G_{s'}(y')\gamma_0(s-s')\gamma(y-y')dydy'ds ds'. \label{eq:phi_del}
\end{align}

Using \eqref{LB-D}, \eqref{estimate-J} and \eqref{estimate-I}, we
conclude the proof as follows. For any $n\geq 1$,
\begin{align*}
& \quad \bP\left( \left\{\int_{0}^t \|D_{r,\bullet} u(t,x)\|_0^2 \, dr <\frac1n\right\} \cap
 \Omega_m \right) \leq
 \bP\left(I(\delta) + \frac{1}{2} J(\delta) > \frac{1}{2m^2}\psi_0(\delta) - \frac1n \right)\\
 &  \leq \left( \frac{1}{2m^2} \psi_0(\delta) - \frac1n\right)^{-1} \Big(\bE[I(\delta)] + \frac{1}{2} \bE[|J(\delta)|]\Big)
    \leq
\frac{  (C_t+C_t'') \delta \phi(\delta) + C_t^* g_{t,x}(\delta)\psi_0(\delta) }{ \frac{1}{2m^2} \psi_0(\delta) - \frac1n }.
\end{align*}
Letting $n\to \infty$, we obtain:
\[
  \bP\left( \left\{\int_{0}^t \|D_{r,\bullet} u(t,x)\|_0^2 dr =0 \right\} \cap \Omega_m \right) \leq 2m^2 \Big((C_t+C_t'') \delta \frac{\phi(\delta)}{\psi_0(\delta)} + C_t^* g_{t,x}(\delta)\Big).
\]
Note that using Fourier transform and  the expression \eqref{FG}, we can rewrite \eqref{eq:phi_del} as
\begin{align*}
\phi(\delta) &=  \int_{[0,\delta]^2}\int_{\bR^{d}} \widehat{G}_s(\xi) \widehat{G}_{s'}(\xi)\gamma_0(s-s') \mu(d\xi) ds ds' \\
&\leq   \int_{[0,\delta]^2}\int_{\bR^{d}} \frac{1}{2} \Big[  \widehat{G}_s(\xi)^2 + \widehat{G}_{s'}(\xi)^2 \Big]\gamma_0(s-s') \mu(d\xi) ds ds'  \leq \Gamma_\delta  \int_{[0,\delta]}\int_{\bR^{d}}   \widehat{G}_s(\xi)^2   \mu(d\xi) ds,
\end{align*}
where $\Gamma_{\delta}=2\int_0^{\delta}\gamma_0(s)ds$. That is, we have
$\phi(\delta) \leq
\Gamma_{\delta} \psi_0(\delta)$. Finally taking $\delta\to 0$ proves \eqref{int2}, since
$g_{t,x}(\delta)\to 0$ and $\delta \frac{\phi(\delta)}{\psi_0(\delta)} \leq \delta \Gamma_\delta\to 0$ as $\delta\to 0$.
\qed

\appendix

\section{Appendix}\label{appA}
\subsection{Auxiliary Results}

Let $d=2$ and assume Hypothesis ${\bf (H1)}$.
Suppose that $S: \R_+\times \R^2 \to \R$ is a measurable function such that $S\in L^{2}( \R_+; L^{2q} (\R^2))$, where $q$ is given in
\eqref{def-q} in cases (\texttt{a}) and (\texttt{b}) and it is given in \eqref{def-qq} in case (\texttt{c}).
We assume also that $S$   has  support in $[0,T]\times B_M$  for some $M>0$. We claim that $S$ belongs to $\cH$ and the following estimates hold true:
\[
 \| S \|_{\cH}  \le  \sqrt{\Gamma_T}   \| S \|_{\cH_0}
  \le  \sqrt{\Gamma_T D_\gamma } \| S\|_{ L^{2}( \R_+; L^{2q} (\R^2))}.
\]
 Indeed, the first inequality is due to \eqref{white-ineq} and the second one follows from   \eqref{q-ineq}.

For $d=1$, if     $S\in L^{2}( \R_+ \times \R)$    has  support in $[0,T]\times B_M$  for some $M>0$, then $S\in \cH$ and the following estimates hold true:
\[
 \| S \|_{\cH}  \le  \sqrt{\Gamma_T}   \| S \|_{\cH_0}
  \le  \sqrt{\Gamma_T  \| \gamma \mathbf{1}_{B_{2M}} \|_{L^1(\bR)} } \| S\|_{ L^{2}( \R_+\times \R)}.
\]
 Indeed, the first inequality is due to \eqref{white-ineq} and the second one follows from
 \begin{align*}
 \| S \|_{\cH_0}^2 & =  \int_0^T \int_{\R^2}  S(t,y) S(t,y') \gamma(y-y') dydy'
 dt \leq   \int_0^T \int_{\R^2}  \frac{ S^2(t,y)  + S^2(t,y') }{2} \gamma(y-y') dydy' dt
 \end{align*}
and
\[
\sup_{y'\in B_M}\int_{B_M} \gamma(y-y') dy \leq \int_{B_{2M}} \gamma(y) dy.
\]

\medskip

  Let us recall the Hypothesis  ${\bf (H2)}$: The measures  $\mu_0$ and $\mu$ such that $\gamma_0 =\mathcal{F}\mu_0$ and $\gamma = \mathcal{F} \mu$ are absolutely continuous with respect to the Lebesgue measures with strictly positive densities.

\begin{lemma}\label{pos-norm} Fix $d\in\{1,2\}$ and assume that the  Hypothesis    ${\bf (H2)}$  holds.   Let  the Hypothesis    ${\bf (H1)}$ hold  if in addition $d=2$.   Suppose that  the function $S: \bR_{+} \times \bR^d\to\bR$  has  support in $[0,T]\times B_M$  for some $M>0$ and $S\in L^{2}\big( \bR_+;  L^{2q}(\bR^d) \big)$,  where
\begin{align*}
\begin{cases}
\text{$q$ is given by \eqref{def-q} in cases {\rm (\texttt{a})} and {\rm(\texttt{b})} and  by \eqref{def-qq} in case {\rm(\texttt{c})} if $d=2$, } \\
\text{$q=1$ if $d=1$.}
\end{cases}
\end{align*}
 If
\begin{equation}
I:=\int_0^{T}\int_{\bR^d}\int_{\bR^d} S(t,x) S(t,y)\gamma(x-y)dxdydt>0,
\label{eq:1}
\end{equation}
then $\|S\|_{\cH}>0$.
\end{lemma}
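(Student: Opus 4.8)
The goal is to show that the hypothesis $I>0$, where $I$ is the $\mathcal{H}_0$-type norm of $S$ with respect to the spatial kernel $\gamma$ integrated in time, forces $\|S\|_{\cH}>0$. The natural route is to pass everything to the Fourier side in space. First I would note that since $S$ has support in $[0,T]\times B_M$ and lies in $L^2(\bR_+;L^{2q}(\bR^d))$ with the relevant $q$, the discussion in the appendix preceding the lemma gives $S\in\cH$ (and $S(t,\bullet)\in L^1(\bR^d)\cap|\cP_0|$ for a.e.\ $t$ by the compact support and the embedding), so all the Fourier manipulations below are legitimate. Then, using the Parseval-type relation \eqref{parseval} with spectral measure $\mu=\cF^{-1}\gamma$, and the fact that under ${\bf (H2)}$ $\mu$ has a strictly positive density $\varphi$,
\[
I=\int_0^T\!\!\int_{\bR^d}\bigl|\cF S(t,\bullet)(\xi)\bigr|^2\,\varphi(\xi)\,d\xi\,dt
=\int_0^T\!\!\int_{\bR^d}\bigl|\widehat{S}(t,\xi)\bigr|^2\varphi(\xi)\,d\xi\,dt,
\]
where $\widehat{S}(t,\xi):=\cF S(t,\bullet)(\xi)$. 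Thus $I>0$ means exactly that $\widehat S(t,\xi)\neq 0$ on a set of positive $dt\,\varphi(\xi)d\xi$-measure, and since $\varphi>0$ a.e., on a set of positive $dt\,d\xi$-measure.

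Next I would write $\|S\|_{\cH}^2$ in the Fourier domain as well. Starting from \eqref{defcH} and using spatial Fourier transform together with Fubini (justified by the compact support of $S$ and local integrability of $\gamma_0$),
\[
\|S\|_{\cH}^2=\int_{\bR_+^2}\gamma_0(t-s)\Bigl(\int_{\bR^d}\widehat S(t,\xi)\,\overline{\widehat S(s,\xi)}\,\mu(d\xi)\Bigr)dt\,ds
=\int_{\bR^d}\Bigl(\int_{\bR_+^2}\gamma_0(t-s)\widehat S(t,\xi)\overline{\widehat S(s,\xi)}\,dt\,ds\Bigr)\varphi(\xi)\,d\xi .
\]
Because $\gamma_0$ is non-negative definite, the inner double integral $\int_{\bR_+^2}\gamma_0(t-s)\widehat S(t,\xi)\overline{\widehat S(s,\xi)}\,dt\,ds$ is $\geq 0$ for every fixed $\xi$. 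So $\|S\|_{\cH}^2\geq 0$, and it vanishes if and only if, for $\varphi(\xi)d\xi$-a.e.\ $\xi$ (equivalently, for Lebesgue-a.e.\ $\xi$), the function $t\mapsto\widehat S(t,\xi)$ is a null vector for the non-negative definite form defined by $\gamma_0$ on $L^2([0,T])$. This is where ${\bf (H2)}$ on $\gamma_0$ enters: since $\gamma_0=\cF\mu_0$ with $\mu_0$ having a strictly positive density, the quadratic form $f\mapsto\int_{\bR_+^2}\gamma_0(t-s)f(t)\overline{f(s)}\,dt\,ds=\int_{\bR}|\cF(f\mathbf 1_{[0,T]})(\tau)|^2\mu_0(d\tau)$ is strictly positive-definite on $L^2([0,T])$ — it is zero only when $\cF(f\mathbf 1_{[0,T]})\equiv 0$, i.e.\ $f=0$ in $L^2([0,T])$. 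Hence $\|S\|_{\cH}=0$ would force $\widehat S(t,\xi)=0$ for a.e.\ $t$, for a.e.\ $\xi$, i.e.\ $\widehat S\equiv 0$ a.e.\ on $\bR_+\times\bR^d$, contradicting $I>0$. Therefore $\|S\|_{\cH}>0$.

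\textbf{Main obstacle.} The conceptual steps are short; the real work is in justifying the two Fubini/Parseval interchanges and, more delicately, in correctly phrasing the "strict positive-definiteness" of the $\gamma_0$-form on a finite interval. The cleanest way is the identity $\int_{[0,T]^2}\gamma_0(t-s)f(t)\overline{f(s)}\,dt\,ds=\int_{\bR}|\cF(f\mathbf 1_{[0,T]})(\tau)|^2\,\mu_0(d\tau)$, valid for $f\in L^2([0,T])$ using that $\gamma_0$ is the Fourier transform of the tempered measure $\mu_0$ and $f\mathbf 1_{[0,T]}$ has compactly supported, bounded, integrable profile; then $\mu_0$-density $>0$ a.e.\ and the injectivity of the Fourier transform on $L^2$ give that this is $0$ iff $f\mathbf 1_{[0,T]}=0$. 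One should also check measurability in $\xi$ of the inner integrals to apply Fubini on $\bR^d$, which follows from joint measurability of $(t,s,\xi)\mapsto\widehat S(t,\xi)\overline{\widehat S(s,\xi)}\gamma_0(t-s)$ and non-negativity of the integrand after the positive-definiteness reduction (or Tonelli on the modulus together with the finiteness $\|S\|_{\cH}<\infty$ established via the appendix estimates). Modulo these routine integrability checks, the argument is complete.
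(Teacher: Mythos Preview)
Your argument is correct. It differs from the paper's proof in execution, though the underlying idea---that Hypothesis ${\bf (H2)}$ makes the $\cH$-form strictly positive-definite---is the same.

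The paper argues by contradiction via the \emph{space-time} Fourier transform: assuming $\|S\|_{\cH}=0$, it approximates $S$ by smooth compactly supported $\psi_k$ in $L^2(\bR_+;L^{2q})$, so that $\|\psi_k\|_{\cH}^2=\int|\cF\psi_k(\tau,\xi)|^2\,\mu_0(d\tau)\mu(d\xi)\to 0$; passing to a subsequence gives $\cF\psi_{k_j}\to 0$ $\mu_0\otimes\mu$-a.e., while the $L^1$ convergence $\psi_k\to S$ gives $\cF\psi_k\to\cF S$ uniformly. Hence $\cF S=0$ $\mu_0\otimes\mu$-a.e., and the strictly positive densities in ${\bf (H2)}$ upgrade this to $\cF S=0$ Lebesgue-a.e., so $S=0$ and $I=0$.

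You instead take the spatial Fourier transform only, write $\|S\|_{\cH}^2=\int_{\bR^d}\bigl(\int_{[0,T]^2}\gamma_0(t-s)\widehat S(t,\xi)\overline{\widehat S(s,\xi)}\,dt\,ds\bigr)\varphi(\xi)\,d\xi$, and then invoke the strict positive-definiteness of the temporal form (via $\mu_0$ having a positive density) for each fixed $\xi$. This avoids the smooth approximation entirely, at the cost of the Fubini step you flag; that step is justified exactly as you indicate, since $|\widehat S(t,\xi)|\le\|S(t,\bullet)\|_{L^1}$ uniformly in $\xi$ and $\|S\|_{\cH_0}<\infty$ by the appendix estimates. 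Your route is arguably more transparent about \emph{where} each half of ${\bf (H2)}$ is used (the spatial density for the $d\xi$-integral, the temporal density for the strict positivity of the $\gamma_0$-form), while the paper's route packages both into a single space-time step and sidesteps the Fubini bookkeeping via approximation.
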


\begin{proof}
 Suppose that $\|S\|_{\cH}=0$.  There exists a sequence of  smooth functions $(\psi_k)_{k\ge 1}$ in $C^\infty ( \bR_+\times \bR^d)$, with  support in    $[0,T]\times B_M$, which converges to $S$ in   $L^2(\bR_+; L^{2q}(\bR^d))$.  Then,
 \[
 0=\| S \|_{\cH}^2 = \lim_{k\rightarrow \infty} \| \psi_k \|^2_{\cH}
 =\lim_{k\rightarrow \infty}   \int_{\bR_+\times \bR^d} | \mathcal{F} \psi_k(\tau, \xi) |^2 \mu_0(d\tau) \mu(d\xi),
 \]
 where $\gamma_0 =\mathcal{F}\mu_0$, $\gamma = \mathcal{F} \mu$ and $\mathcal{F} \psi_k$ stands for the Fourier transform of $\psi_k$ in space-time variables  \emph{in this proof}.
 By choosing a subsequence $(k_j)_{j\geq 1}$ we have that
 \[
\lim_{j\rightarrow \infty} \mathcal{F} \psi_{k_j}(\tau, \xi) =0
\]
for   $\mu_0 \otimes \mu$-almost all $(\tau, \xi)$. On the other hand, keeping in mind that the  supports of  $S,\psi_k$ are contained in $[0,T]\times B_M$,
we have
\[
\big\| \psi_k - S\big\| _{ L^1(\bR_+\times\bR^2)} \leq (\pi M^2 T)^{1-\frac{1}{2q}}  \big\| \psi_k - S\big\| _{ L^{2q}(\bR_+\times\bR^2)}  \leq    (\pi M^2)^{1-\frac{1}{2q}}  T^\frac{1}{2} \big\| \psi_k - S\big\| _{ L^2(\bR_+; L^{2q}(\bR^2))},
\]
from which  we deduce that $(\psi_k)_{k\ge 1}$ converge  in $L^1([0,T] \times B_M)$ to $S$.  Thus   $\mathcal{F} \psi_k(\tau, \xi)$ converges to
$\mathcal{F} S(\tau,\xi)$ for all $(\tau,\xi)$  and the convergence is uniform. As a consequence,
  $\cF S(\tau,\xi)=0$ for $\mu_0 \otimes \mu$-almost all $(\tau,\xi) \in \bR_+ \times \bR^d$ and by Hypothesis ${\bf (H2)}$, we obtain
   $\cF S(\tau,\xi)=0$ for  almost all $(\tau,\xi) \in \bR_+ \times \bR^d$ with respect to the Lebesgue measure.

  Hence $S(t,x)=0$ for almost all $t>0$ and $x \in \bR^d$, i.e. there exists a Borel set $N \subset \bR_{+} \times \bR^d$ with $\lambda_{d+1}(N)=0$ such that $S(t,x)=0$ for all $(t,x) \not \in N$. Here $\lambda_{k}$ denotes the Lebesgue measure on $\bR^{k}$. Therefore,
$$I=\int_0^{\infty}\int_{\bR^d}\int_{\bR^d} 1_{A}(t,x,y) S(t,x) S(t,y)\gamma(x-y)dxdydt,$$
where $A:=\{(t,x,y) \in \bR_{+} \times \bR^d \times \bR^d; (t,x) \in N,(t,y) \in N\}$.

Let $N_t=\{x \in \bR^d; (t,x) \in N\}$ be the section of the set $N$ at point $t>0$. By Fubini's theorem,
$\lambda_{d+1}(N)=\int_{0}^{\infty}\lambda_d(N_t)dt$. Since $\lambda_{d+1}(N)=0$, we infer that $\lambda_d(N_t)=0$ for almost all $t>0$. Note that the section of the set $A$ at point $t$ is $A_t=\{(x,y) \in \bR^d \times \bR^d; (t,x,y) \in A\}=N_t \times N_t$, and its Lebesque measure is $\lambda_{2d}(A_t)=\lambda_{d}^2(N_t)=0$ for almost all $t>0$. By applying Fubini again, we infer that $\lambda_{2d+1}(A)=\int_0^{\infty}\lambda_{2d}(A_t)dt=0$.
This shows   $I=0$, which contradicts \eqref{eq:1}.
\end{proof}


\subsection{Proof of Proposition \ref{propAV}} \label{pfAV}

In this section, we only sketch the proof of Proposition \ref{propAV} as the main body of the proof is almost identical to that in \cite[Proposition 3.2]{Anna}.

\begin{proof}[Proof of \eqref{propA}]
Using the duality relation \eqref{IbP} and the identity $L = -\delta D$, we have
 \[
 \bE\big[  \langle DF, - DL^{-1}G \rangle_\mathcal{H} \big] =  \bE\big[  F (- \delta D)L^{-1}G  \big] = \bE[ F LL^{-1}G] =\bE[FG]= \text{Cov}(F,G),
 \]
  which shows the equality in \eqref{propA}.
  Then,   applying the Gaussian Poincar\'e inequality \eqref{GPI} and using Lemma 3.2  of \cite{NPR09}, we can bound the  variance appearing in the left-hand side of  \eqref{propA} by
\begin{align*}
   \bE\Big[ \| D \langle DF, - DL^{-1}G \rangle_\mathcal{H} \|_\mathcal{H}^2 \Big]  \leq 2 \bE\Big[ \|  \langle D^2F, - DL^{-1}G \rangle_\mathcal{H} \|_\mathcal{H}^2 \Big] + 2 \bE\Big[ \|  \langle DF, - D^2L^{-1}G \rangle_\mathcal{H} \|_\mathcal{H}^2 \Big].
\end{align*}
We will show that the first expectation-term is bounded by $A_1$ and
the other one can be estimated in the same way and bounded by $A_2$.
Using the representation (see \emph{e.g.} \cite[Proposition
2.9.3]{blue})
\[
-DL^{-1} G = \int_0^\infty dt e^{-t} P_t DG,
\]
with   $\{P_t, t\geq 0\}$   the Ornstein-Uhlenbeck semigroup,   we
can write
\begin{align}\label{dedfrom}
 \langle D^2F, - DL^{-1}G \rangle_\mathcal{H} = \int_0^\infty dt e^{-t} \langle D^2F, P_t DG \rangle_\mathcal{H}.
\end{align}
Note that if $(\mathcal{M}, \mathfrak{M}, \nu)$ is a probability space on which $s\in\mathcal{M}\longmapsto V_s\in |\cH|$ is $\mathfrak{M}$-measurable such that $\int_{\mathcal{M}} \big\| |V_s|\big\|_{\cH}^2 \nu(ds)<\infty$, then by Fubini's theorem and Cauchy-Schwarz inequality,
\begin{align*}
\left\| \int_{\mathcal{M}} V_s \nu(ds) \right\|_{\cH}^2 & =  \int_{\mathcal{M}^2}  \langle V_s, V_{s'}\rangle_{\cH} \nu(ds) \nu(ds')  \\
& \leq \int_{\mathcal{M}^2}   \frac{\| V_s\|^2_{\cH} +\| V_{s'}\|^2_{\cH} }{2}  \nu(ds) \nu(ds') =\int_{\mathcal{M}} \| V_s\|_{\cH}^2 \nu(ds).
\end{align*}
Using the above inequality   on $(\bR_+, e^{-t}dt)$, we deduce from \eqref{dedfrom} that
\begin{align*}
\big\|  \langle D^2F, - DL^{-1}G \rangle_\mathcal{H} \big\|_{\cH}^2 \leq  \int_0^\infty dt e^{-t} \big\| \langle D^2F, P_t DG \rangle_\mathcal{H} \big\|^2_{\cH}.
\end{align*}
Observe  that $  \langle D^2F, P_t DG \rangle_\mathcal{H}$ is nothing else but the one-contraction $D^2F\otimes_1 P_tDG$, so that
\begin{align*}
 \big\| \langle D^2F, P_t DG \rangle_\mathcal{H} \big\|^2_{\cH} &= \langle D^2F\otimes_1 P_tDG,  D^2F\otimes_1 P_tDG      \rangle_{\cH} \\
 &= \big\langle D^2F\otimes_1 D^2F,  (P_tDG)\otimes (P_tDG)     \big\rangle_{\cH^{\otimes 2}},
 \end{align*}
 where the last equality follows from the definition of contractions.
Therefore,  we have
 \begin{align*}
 &  \bE[ \|  \langle D^2F, - DL^{-1}G \rangle_\mathcal{H} \|_\mathcal{H}^2  ] \\
 &\quad \leq   \bE \int_0^\infty dt ~e^{-t}  \int_{\bR_+^6\times\bR^{6d}} drdr' dsds' d\theta d\theta' dzdz' dydy' dwdw' \gamma_0(\theta - \theta') \gamma_0(s-s') \gamma_0(r-r') \\
& \qquad\quad \times  \gamma(z-z') \gamma(w-w') \gamma(y-y') \times  \big[ D_{r,z}D_{\theta,w}F \big]  \big[  D_{s,y}D_{\theta',w'}F \big]   P_t (D_{r',z'}G)  P_t(D_{s', y' }G)
\end{align*}
and thus we   end our estimation of $ \bE[ \|  \langle D^2F, - DL^{-1}G \rangle_\mathcal{H} \|_\mathcal{H}^2  ]$ by using H\"older inequality and the contraction property of $P_t$ on $L^4(\Omega)$, that is,  using $\| P_t  (D_{r',z'}G)  \| _4 \leq \| D_{r',z'}G \|_4$.

To estimate the other expectation-term $ \bE[ \|  \langle DF, - D^2L^{-1}G \rangle_\mathcal{H} \|_\mathcal{H}^2  ]$, one can begin with
\[
 - D^2L^{-1}G = \int_0^\infty dt e^{-2t} P_t D^2G
\]
and then follow the same arguments.
\qedhere

\end{proof}


\end{document}